\newcommand{\omi}{\omega^{-1}}
\newcommand{\om}{\omega}
\newcommand{\ka}{\kappa}
\newcommand{\si}{\sigma}
\newcommand{\ba}{\mathcal{G}}  
\newcommand{\va}{\varphi}
\newcommand{\fg}{\mathfrak g}
\newcommand{\fp}{\mathfrak p}
\newcommand{\Ad}{{\rm Ad}}
\newcommand{\Fl}{{\rm Fl}}
\newcommand{\na}{\nabla}
\newcommand{\U}{\Upsilon}
\newcommand{\gr}{{\rm gr}}
\newcommand{\lz}{[\![}
\newcommand{\pz}{]\!]}
\newcommand{\R}{\mathbb{R}}
\newtheorem{prop}{Proposition}[section]
\newtheorem*{prop*}{Proposition}
\newtheorem{thm}[prop]{Theorem}
\newtheorem*{thm*}{Theorem}
\newtheorem{lemma}[prop]{Lemma}
\newtheorem*{lemma*}{Lemma}
\newtheorem{cor}[prop]{Corollary}
\newtheorem*{cor*}{Corollary}
\newtheorem*{rem*}{Remark}
\theoremstyle{definition}
\newtheorem{deff}[prop]{Definition}
\newtheorem*{deff*}{Definition}
\theoremstyle{remark}
\newtheorem{exam}[prop]{Example}
\newtheorem*{exam*}{Example}
\newtheorem{rem}[prop]{Remark}
\begin{document}
\title{Symmetries of parabolic geometries}
\author{Lenka Zalabov\'a}
\address{
Masaryk University, Brno, Czech Republic and 
International Erwin Schr\" odinger Institute for Mathematical Physics, Vienna, Austria}
\email{zalabova@math.muni.cz}
\begin{abstract} 
We generalize the concept of affine locally symmetric spaces for parabolic geometries. 
We discuss mainly $|1|$--graded geometries and we show some restrictions on their 
curvature coming from the existence of symmetries. 
We use the theory of Weyl structures to discuss more interesting $|1|$--graded 
geometries which can carry a symmetry in a point with nonzero curvature.
More concretely, we discuss the number of different symmetries which can exist at the 
point with nonzero curvature.
\end{abstract}

\thanks{Discussions with J. Slov\'ak, V. \v Z\'adn\' ik and A. \v Cap were very useful 
during the work on this paper.  This research has been supported at different times 
by the grant GACR 201/05/H005 and by  ESI Junior Fellows program}
\keywords{Cartan geometries, parabolic geometries, $|1|$--graded geometries, Weyl structures, symmetric spaces}
\subjclass{53C15, 53A40, 53A30, 53C05}

\maketitle

We introduce and discuss symmetries for the so called parabolic geometries.  
Our motivation comes from locally symmetric spaces. 
Remind that affine locally symmetric space is a manifold  with locally defined 
symmetries at each point together with an affine connection which is invariant 
with respect to the symmetries. It can be proved that the manifold with 
an affine connection is locally symmetric if and only if the torsion vanishes 
and the curvature is covariantly constant. See \cite{KN2,H} for detailed discussion 
of affine locally symmetric spaces.

We are interested in $|1|$--graded parabolic geometries. In this case, 
the definition of the symmetry is a generalization of the classical one 
and follows the intuitive idea. We show an analogy of the facts known for 
the affine locally symmetric spaces and we give further results which 
come from the general theory of parabolic geometries. 
See also \cite{ja} for discussion of torsion restrictions for symmetric 
$|1|$--graded geometries.

Among all symmetric $|1|$--graded geometries, there are some `interesting' types of 
them, which can carry a symmetry in the point and still allow some nonzero curvature 
at this point. 
We use the theory of Weyl structures to discuss these geometries, see \cite{WS}. 
In particular, we study the action of symmetries on Weyl structures and 
via these actions, we describe the Weyl curvature of the geometries.
Finally we discuss the question on the number of different symmetries which can exist in the 
point with nonzero curvature.

\section{Introduction to Symmetries}
The aim of this section is to introduce the definition and discuss basic properties 
of symmetries for parabolic geometries. We first remind some basic definitions 
and facts on Cartan and parabolic geometries.
In this paper we follow the concepts and notation of \cite{WS, parabook} and the reader
is advised to consult \cite{TC}.

\subsection*{Cartan geometries}
Let $G$ be a Lie group, $P\subset G$ a Lie subgroup, and write $\fg$ and
$\fp$ for their Lie algebras. A \emph{Cartan geometry} of type $(G,P)$ on a
smooth manifold $M$ is a principal  $P$--bundle $p:\ba \rightarrow M$ together with the
$1$--form $\om \in \Omega^1(\ba,\fg)$ called
a \emph{Cartan connection} such that:
\begin{enumerate}
\item $(r^h)^*\om=\Ad_{h^{-1}} \circ \om$ for each $h \in P$,
\item $\om(\zeta_X(u))=X$ for each fundamental vector field $\zeta_X$, $X \in \fp$, 
\item $\om(u):T_u\ba \longrightarrow  \fg$ is a linear isomorphism for 
each $u \in \ba$.
\end{enumerate} 
The simplest examples are so called \emph{homogeneous models}, which are the 
$P$--bundles $G\rightarrow G/P$ endowed with the (left) Maurer Cartan form $\om_G$. 

The absolute parallelism $\om$ provides the existence of \emph{constant vector fields} 
$\omi(X)$ from $\frak X(\ba)$ defined  for all $X \in \fg$ by $\om (\omi(X)(u))=X$ for 
all $u \in \ba$. 
The flow lines of constant vector field $\omi(X)$ we denote by $\Fl^{\omi(X)}_t(u)$
for $u \in \ba$.

A \emph{morphism of Cartan geometries} of the same type from 
$(\ba \rightarrow M,\om)$ to $(\ba'\rightarrow M',\om')$ is a principal bundle 
morphism $\va:\ba \rightarrow \ba'$ such that $\va^*\om'=\om$. Further we  
denote the base morphism of $\va$ by $\underline \va:M \rightarrow M'$.
Remark that each morphism of Cartan geometries preserves
constant vector fields and hence preserves flows of constant vector 
fields, thus
\begin{align*}
T\va \circ \omi(X)&= \om'^{-1}(X) \circ \va, 
\\ \va \circ \text{\Fl}^{\omi(X)}_t(u)&=\text{\Fl}^{\om'^{-1}(X)}_t(\va(u))
\end{align*}
hold for all  $X \in \fg$. 

We define the \emph{kernel} of the geometry of type $(G,P)$ 
as the maximal normal subgroup of $G$ which is contained in $P$. 
The geometry is called 
\emph{effective} if the kernel is trivial and the geometry is called 
\emph{infinitesimally effective} if the kernel is discrete. 
The following Theorem describes useful properties of morphisms, see \cite{S}:
\begin{thm} \label{rigidity}
Let $(\ba \rightarrow M,\om)$ and $(\ba' \rightarrow M',\om')$ be Cartan 
geometries of type $(G,P)$ and denote by $K$ the kernel. Let $\va_1$ 
and $\va_2$ be morphisms of these Cartan geometries which cover the 
same base morphism $\underline \va:M\rightarrow M'$.
Then there exists a smooth function $f:\ba \rightarrow K$ such that 
$\va_1(u)=\va_2(u) \cdot f(u)$ for all $u \in \ba$. 

In particular, if the geometry is effective, then $\va_1=\va_2$ and $f$ is 
constant on connected components of $M$ for infinitesimally effective geometries.
\end{thm}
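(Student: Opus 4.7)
The plan is to construct $f$ from the relation $\va_1(u)=\va_2(u)\cdot f(u)$ and deduce its properties from the Cartan-geometry axioms.

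Since $\va_1$ and $\va_2$ cover the same base map, $\va_1(u)$ and $\va_2(u)$ lie in the same fibre of $p'$ for every $u$; freeness of the $P$-action on $\ba'$ yields a unique $f(u)\in P$ with the claimed property, and smoothness of $f$ follows from a local trivialisation of $\ba'$. The $P$-equivariance of $\va_1,\va_2$ immediately gives the twisted equivariance $f(u\cdot h)=h^{-1}f(u)h$ for $h\in P$. Differentiating $\va_1=\va_2\cdot f$ at $\xi\in T_u\ba$, applying $\om'$, and using axioms (1) and (2) for $\om'$ together with $\va_i^*\om'=\om$, yields
\[
\om(\xi)=\Ad_{f(u)^{-1}}(\om(\xi))+\delta f(\xi),
\]
where $\delta f$ is the left logarithmic derivative of $f$, with values in $\fp$. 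Since $\om(u)$ is a linear isomorphism this forces the pointwise inclusion $(\mathrm{id}-\Ad_{f(u)^{-1}})(\fg)\subset\fp$.

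The main obstacle is upgrading this pointwise condition---which only places $f(u)$ in the kernel $K_0$ of the linear isotropy representation of $P$ on $\fg/\fp$, possibly strictly larger than $K$---to the conclusion $f(u)\in K$. For this I would exploit the fact that morphisms intertwine the flows of \emph{all} constant vector fields $\omi(X)$, $X\in\fg$, not only those with $X\in\fp$. Applying $\va_1=\va_2\cdot f$ along such flows, together with the right-translation rule $(r^h)_*\omi(X)=\omi(\Ad_{h^{-1}}(X))$ (a direct consequence of axiom (1)), produces a finite-time identity in $\ba'$ for each $X$ and $t$; comparing it locally with the homogeneous model $G\to G/P$---on which one checks from left-invariance of $\om_G$ that the automorphisms covering the identity are precisely the left translations by elements of $K$---identifies $f(u)$ with such a left translation and so forces $f(u)\in K$.

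The two corollaries are then immediate: effectiveness gives $K=\{e\}$, hence $f\equiv e$ and $\va_1=\va_2$; infinitesimal effectiveness makes $K$ discrete, so the smooth map $f:\ba\to K$ is locally constant, and the twisted equivariance then yields that $f$ descends to a locally constant function on $M$, i.e.\ is constant on each connected component.
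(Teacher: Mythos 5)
Your construction of $f$ (freeness and local triviality of the $P$--action), the twisted equivariance $f(u\cdot h)=h^{-1}f(u)h$, and the differentiated identity $\om(\xi)=\Ad_{f(u)^{-1}}(\om(\xi))+\delta f(\xi)$ are all correct, and you are right that this only places $f(u)$ in the kernel $K_0$ of the isotropy representation on $\fg/\fp$, which is genuinely larger than $K$ (for a parabolic type, $K_0$ contains all of $P_+$). The gap is in the step that is supposed to close this distance: you cannot ``compare locally with the homogeneous model''. A curved Cartan geometry is not locally isomorphic to $(G\to G/P,\om_G)$ --- that is exactly Theorem \ref{thm-locally-flat}, which requires vanishing curvature --- so the classification of identity-covering automorphisms of the flat model tells you nothing about $f$ here. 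Moreover, the finite-time identity you get from intertwining flows is an identity in the curved bundle $\ba'$, which has no group structure; by itself it only says that the flow lines of $\om'^{-1}(X)$ and $\om'^{-1}(\Ad_{f(u)}X)$ through $\va_2(u)$ stay in the same fibres, and differentiating that at $t=0$ merely reproduces the pointwise condition $\Ad_{f(u)}X\equiv X \mod \fp$ you already had. As stated, the argument does not force $f(u)\in K$.

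The missing idea is a propagation argument that is independent of the curvature. From your own identity $\delta f(\xi)=\om(\xi)-\Ad_{f(u)^{-1}}\om(\xi)$ one computes that $A(u):=\Ad_{f(u)}\in Gl(\fg)$ satisfies $dA(\xi)=A(u)\circ\operatorname{ad}_{\om(\xi)}-\operatorname{ad}_{\om(\xi)}\circ A(u)$, hence along constant vector fields
$$
\Ad_{f(\Fl^{\omi(X)}_t(u))}=\Ad_{\exp(-tX)}\circ\Ad_{f(u)}\circ\Ad_{\exp(tX)},
$$
with no curvature terms. Since every point reached by a concatenation of such flows again satisfies the pointwise condition, and since products $\exp(t_1X_1)\cdots\exp(t_kX_k)$ with small $t_i$ exhaust the identity component of $G$, one concludes that $(\Ad_{f(u)}-\operatorname{id})(\fg)$ lies in $\bigcap_g \Ad_g(\fp)$, the maximal $\Ad$--invariant subspace of $\fp$, i.e.\ in the Lie algebra of $K$. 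In the parabolic situation this subspace is zero, so $\Ad_{f(u)}=\operatorname{id}$ and $f(u)\in K$ by Lemma \ref{id}(1). Your final deductions (effective $\Rightarrow$ $K$ trivial $\Rightarrow$ $\va_1=\va_2$; infinitesimally effective $\Rightarrow$ $K$ discrete $\Rightarrow$ $f$ locally constant and descending by equivariance) are fine once the membership $f(u)\in K$ is actually established.
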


We shall mainly deal with automorphisms of Cartan geometries. These are 
$P$--bundle automorphisms, which preserve the Cartan connection of the geometry. 
In the homogeneous case, there
is the Liouville theorem, see \cite{S}:
\begin{thm} \label{auto} Let $G/P$ be connected.
All (locally defined) automorphisms of the homogeneous model 
$(G\rightarrow G/P,\om_G)$ are  left multiplications by elements of $G$.
\end{thm}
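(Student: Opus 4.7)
The plan is to show first that every left multiplication $L_g:G\to G$ is an automorphism of the homogeneous model, which is immediate from the left--invariance of the Maurer--Cartan form, $(L_g)^*\om_G=\om_G$, combined with the fact that $L_g$ commutes with the right $P$--action. The converse amounts to showing that an automorphism $\va$ is locally determined by its value at a single point, which I would do via a clopen argument driven by the flow--preserving property of morphisms.

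The key observation is that on the homogeneous model the constant vector field $\omi(X)$ coincides with the left--invariant vector field generated by $X$ (since $\om_G|_g=T_gL_{g^{-1}}$), so its flow is right multiplication by $\exp(tX)$:
\begin{equation*}
  \Fl^{\omi(X)}_t(u)=u\cdot\exp(tX).
\end{equation*}
Given a locally defined automorphism $\va$, pass to the connected component of its domain through some chosen point $u_0$, call it $U\subseteq G$, and set $g_0:=\va(u_0)\cdot u_0^{-1}\in G$, so that $L_{g_0}(u_0)=\va(u_0)$. Applying the flow--intertwining formula displayed just before Theorem \ref{rigidity} to both $\va$ and $L_{g_0}$ yields, for every $X\in\fg$ and all sufficiently small $t$,
\begin{equation*}
  \va(u_0\exp(tX))=\va(u_0)\exp(tX)=g_0u_0\exp(tX)=L_{g_0}(u_0\exp(tX)).
\end{equation*}
Since $X\mapsto u_0\exp(X)$ is a local diffeomorphism from a neighborhood of $0\in\fg$ onto a neighborhood of $u_0$, this shows $\va=L_{g_0}$ on an open neighborhood of $u_0$.

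To propagate this local equality I consider $S:=\{u\in U:\va(u)=L_{g_0}(u)\}$; it is closed by continuity and open by the same flow argument applied at any of its points, so connectedness of $U$ forces $S=U$. For a globally defined automorphism, $S$ is automatically $P$--saturated (both $\va$ and $L_{g_0}$ are $P$--equivariant), hence descends to a clopen subset of $G/P$, and the connectedness hypothesis on $G/P$ gives $S=G$ so that $\va=L_{g_0}$ everywhere. I do not anticipate a genuine obstacle: the whole argument rests on the fact that on a Lie group the left--invariant parallelism is integrated transitively by one--parameter subgroups. The only subtle step is the clopen propagation of local to global equality, which is precisely where the connectedness assumption on $G/P$ enters.
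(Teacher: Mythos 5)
Your argument is correct. Note that the paper itself gives no proof of this statement -- it is quoted from Sharpe's book \cite{S}, where it rests on the uniqueness part of the fundamental theorem for the Darboux derivative: an automorphism satisfies $\va^*\om_G=\om_G$, i.e.\ it has the same Darboux derivative as the identity, hence agrees with a left translation on a connected domain. Your proof is a self-contained, elementary rendering of exactly that uniqueness statement: you integrate the parallelism by one-parameter subgroups, using that $\omi(X)$ is the left-invariant field with flow $u\mapsto u\exp(tX)$, and propagate the pointwise identification $\va(u_0)=g_0u_0$ by the clopen argument. All the individual steps check out (left translations preserve $\om_G$ and commute with the right $P$-action; the flow-intertwining formula gives $\va(u\exp X)=\va(u)\exp X$ for small $X$; openness and closedness of the coincidence set). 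One small refinement: in the locally defined case you conclude $\va=L_{g_0}$ only on the connected component of the total space through $u_0$, but the domain $p^{-1}(V)$ of a local automorphism over a connected $V\subset G/P$ need not be connected (e.g.\ when $P$ is disconnected); the same $P$-saturation observation you invoke in the global case ($\va(uh)=\va(u)h$ shows the coincidence set and its complement are $P$-invariant, so the coincidence set projects to a clopen subset of $V$) closes this and yields a single $g_0$ over any connected base domain, which is the form in which the connectedness hypothesis is really used.
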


The structure equation defines the horizontal smooth $2$--form 
$K\in \Omega^2(\ba,\fg)$ in the following way:
$$
K(\xi,\eta)=d\om(\xi,\eta)+[\om(\xi),\om(\eta)]
.$$ 
This makes sense for each Cartan connection $\om$ and the form $K$ is called 
the \emph{curvature form}. 
Notice that the Maurer--Cartan equation implies that the curvature of 
homogeneous model is zero. 
It can be proved, see \cite{S}: 
\begin{thm}\label{thm-locally-flat}
If the curvature of a Cartan geometry of type $(G,P)$ vanishes, then the geometry 
is locally isomorphic to the homogeneous model $(G \rightarrow G/P,\om_G)$.
\end{thm}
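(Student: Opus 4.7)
The plan is to construct a local isomorphism with the homogeneous model by producing a leaf of an involutive distribution whose projections to the two bundles are local diffeomorphisms. Concretely, pick a point $u_0 \in \ba$ and any $g_0 \in G$; I want a neighborhood $U$ of $u_0$ and a morphism $\va:U \to G$ of Cartan geometries with $\va(u_0)=g_0$, i.e.\ satisfying $\va^*\om_G = \om|_U$ and $P$-equivariance. Once $\va$ is built, the base map $\underline\va:p(U) \to G/P$ is the desired local isomorphism.

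The key construction lives on the product $\ba \times G$. Let $\sigma := \mathrm{pr}_1^*\om - \mathrm{pr}_2^*\om_G \in \Omega^1(\ba\times G,\fg)$ and set $D := \ker\sigma \subset T(\ba\times G)$. At each point $(u,g)$ the two forms $\om(u)$ and $\om_G(g)$ are linear isomorphisms onto $\fg$, so $D_{(u,g)}$ is the graph of the isomorphism $\om_G(g)^{-1}\circ\om(u):T_u\ba\to T_gG$; in particular $\dim D=\dim\fg$, and the projections of $D$ to $T\ba$ and $TG$ are isomorphisms on each fiber.

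The main step is checking involutivity of $D$, and this is exactly where the hypothesis $K=0$ enters. Using the structure equation for $\om$ (with $K=0$) and the Maurer--Cartan equation for $\om_G$, one gets on $\ba\times G$
\begin{equation*}
d\sigma = -\tfrac{1}{2}[\mathrm{pr}_1^*\om,\mathrm{pr}_1^*\om] + \tfrac{1}{2}[\mathrm{pr}_2^*\om_G,\mathrm{pr}_2^*\om_G].
\end{equation*}
On pairs of vectors tangent to $D$ one has $\mathrm{pr}_1^*\om = \mathrm{pr}_2^*\om_G$, so $d\sigma|_{D}=0$. Combined with $\sigma|_D=0$, this gives $\sigma([\xi,\eta])=0$ for $\xi,\eta$ sections of $D$, so $D$ is involutive. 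Applying Frobenius, let $L$ be the maximal integral leaf through $(u_0,g_0)$. Since the restriction of $\mathrm{pr}_1$ to $L$ has bijective differential at every point, it is a local diffeomorphism; shrinking to a neighborhood on which it is a diffeomorphism onto an open $U\subset \ba$, define $\va:U\to G$ as $\mathrm{pr}_2$ composed with the inverse. By construction the graph of $\va$ lies in $L$, hence in $D$, which is precisely the condition $\va^*\om_G=\om$.

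It remains to verify that $\va$ is $P$-equivariant on a (possibly smaller) neighborhood, so that it is a morphism of Cartan geometries and descends to $M\to G/P$. Equivariance follows from the rigidity principle behind Theorem \ref{rigidity}: both $u\mapsto \va(u\cdot h)$ and $u\mapsto \va(u)\cdot h$ pull $\om_G$ back to $\om$ and agree at $u_0$ when $h=e$, and both send fundamental vector fields to fundamental vector fields because $\om(\zeta_X)=X=\om_G(\zeta_X)$; differentiating in $h\in P$ and integrating along flows of constant vector fields of the form $\omi(Y)$ with $Y\in\fp$ shows the two maps coincide near $u_0$. The expected obstacle is precisely this equivariance/uniqueness argument, since the involutivity computation is a one-line consequence of flatness; but it is handled cleanly by the transport property $\va\circ\Fl^{\omi(X)}_t = \Fl^{\om_G^{-1}(X)}_t\circ\va$, valid for any map pulling $\om_G$ back to $\om$, applied to $X\in\fp$.
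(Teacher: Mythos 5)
The paper does not prove this statement at all --- it is quoted from Sharpe \cite{S} --- so there is no internal proof to compare with; your argument is in fact the standard one behind the cited result (Cartan's technique of the graph): the distribution $D=\ker(\mathrm{pr}_1^*\om-\mathrm{pr}_2^*\om_G)$ on $\ba\times G$, involutivity from flatness plus the Maurer--Cartan equation, Frobenius, and then equivariance via the transport of fundamental fields. The core of your construction is correct: $D$ has constant rank $\dim\fg$, is the graph of $\om_G(g)^{-1}\circ\om(u)$, and your cancellation of the two bracket terms on $D$ is exactly where $K=0$ is used; the resulting local map $\va$ with $\va^*\om_G=\om$ is fine.

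The only place that needs tightening is the equivariance paragraph. First, a small imprecision: for fixed $h\in P$ the maps $u\mapsto\va(u\cdot h)$ and $u\mapsto\va(u)\cdot h$ do not pull $\om_G$ back to $\om$ but to $\Ad_{h^{-1}}\circ\om$ (using $(r^h)^*\om=\Ad_{h^{-1}}\circ\om$ on both sides); that is harmless for a uniqueness argument, but then you must exhibit a point where the two maps agree, and ``they agree at $u_0$ when $h=e$'' does not do this for $h\neq e$. The flow mechanism you invoke is the right fix and should be made the actual argument: since $\om(\zeta_Y)=Y$ for $Y\in\fp$, the fundamental field $\zeta_Y$ coincides with $\omi(Y)$, while $\om_G^{-1}(Y)$ is the left-invariant field on $G$ whose flow is $r^{\exp tY}$; hence $\va\circ\Fl^{\omi(Y)}_t=\Fl^{\om_G^{-1}(Y)}_t\circ\va$ gives $\va(u\exp tY)=\va(u)\exp tY$ wherever defined. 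Second, and more substantively, your $U$ is a neighborhood of $u_0$ in $\ba$, not a union of fibers, and $P$ need not be connected, so this only yields equivariance for $h$ near $e$ in $\exp\fp$. To get a genuine local isomorphism of Cartan geometries you should extend by equivariance over a tube: choose a local section $s$ of $p$ over a neighborhood $V$ of $p(u_0)$ with $s(V)\subset U$, define $\tilde\va(s(x)\cdot h):=\va(s(x))\cdot h$ on $p^{-1}(V)\simeq V\times P$, and check $\tilde\va^*\om_G=\om$ directly by splitting a tangent vector at $s(x)h$ into $Tr^h$ of a vector along $s$ plus a fundamental vector and using the equivariance and reproduction properties of both $\om$ and $\om_G$ (the flow identity above guarantees consistency with the original $\va$). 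With that routine extension added, the proof is complete and descends to the desired local isomorphism $p(U)\to G/P$.
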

If the curvature vanishes, then the Cartan geometry is called \emph{locally flat}. 
Homogeneous models are sometimes called \emph{flat} models.

The curvature can be equivalently described by means of the parallelism 
by the \emph{curvature function}
$\ka:\ba \rightarrow \wedge^2 (\fg/\fp)^* \otimes \fg$, where 
\begin{align*}
\ka(u)(X,Y)&=K(\om^{-1}(X)(u),\om^{-1}(Y)(u))=\\
&=[X,Y]-\om([\om^{-1}(X),\om^{-1}(Y)](u))
.
\end{align*}
If the values of $\ka$ are in $\wedge^2(\fg/\fp)^* \otimes
\fp$, we call the geometry \emph{torsion free}.
We will use both descriptions of the curvature and we will not distinguish 
between them.

\subsection*{Parabolic geometries}
Let $\fg$ be a semisimple Lie algebra.
A \emph{$|k|$--grading on $\fg$} is a vector space decomposition 
$$
\fg=\fg_{-k}\oplus \dots   \oplus \fg_{0} \oplus \dots \oplus\fg_{k}
$$ 
such that $[\fg_i,\fg_j]\subset \fg_{i+j}$ for all $i$ and $j$ (we
understand $\fg_{r}=0$
for $|r|>k$) and such that the subalgebra $\fg_{-}:=\fg_{-k}\oplus \dots
\oplus \fg_{-1}$ is generated by $\fg_{-1}$. We suppose that there is
no simple ideal of $\fg$ contained in $\fg_0$ and that the grading on $\fg$ is fixed. 
Each grading of $\fg$ defines the \emph{filtration} 
$$
\fg=\fg^{-k}\supset \fg^{-k+1} \supset \dots
\supset \fg^{k}=\fg_k
,$$ 
where $\fg^{i}=\fg_i \oplus \dots \oplus \fg_k$. In
particular, $\fg_0$ and $\fg^0=:\fp$ are subalgebras of $\fg$ and
$\fg^1=:\fp_{+}$ is a nilpotent ideal in $\fp$.

Let $G$ be a semisimple Lie group with the Lie algebra $\fg$. 
To get a geometry we have to choose Lie subgroups
$G_0\subset P\subset G$ with prescribed subalgebras 
$\fg_0$ and $\fp$.
The obvious choice is this one: 
\begin{align*}
G_0 &:=\{g \in G \mid \Ad_g(\fg_i)\subset \fg_i, 
\ \forall i=-k,\dots,k \},
\\ 
P &:=\{g \in G \mid \Ad_g(\fg^i)\subset \fg^i,\  \forall i=-k,\dots,k\}
.\end{align*}
This is the maximal possible choice,
but we may also take the connected component of the unit in these subgroups
or anything between these two extremes.  
It is not difficult to show for these subgroups, see \cite{Y}:
\begin{prop} \label{rozklad}
Let $\fg$ be a $|k|$--graded semisimple Lie algebra and $G$ be a Lie group 
with Lie algebra $\fg$. \\
(1) $G_0 \subset P \subset G$ are closed subgroups with Lie algebras 
$\fg_0$ and $\fp$, respectively. \\
(2) The map $(g_0,Z)\mapsto g_0\exp Z$ defines a diffeomorphism 
$G_0 \times \fp_+ \rightarrow P$.
\end{prop}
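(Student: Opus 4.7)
For part (1), I would first observe that both $G_0$ and $P$ are defined as the preimages of closed subsets under a continuous map. Specifically, writing $H_i$ for the stabilizer of $\fg_i$ under the adjoint action on the Grassmannian of subspaces of appropriate dimension, we have $G_0 = \bigcap_i \Ad^{-1}(H_i)$, and similarly for $P$ with the subspaces $\fg^i$. To identify the Lie algebras, I would use that $X \in \operatorname{Lie}(G_0)$ if and only if $\exp(tX) \in G_0$ for all $t$, which after differentiating becomes $\ad_X(\fg_i) \subset \fg_i$ for all $i$. The grading element $E \in \fg_0$ (with $\ad_E|_{\fg_j} = j\cdot \mathrm{id}$) combined with the grading condition $[\fg_i,\fg_j]\subset \fg_{i+j}$ then forces $X \in \fg_0$: decomposing $X = \sum_j X_j$ and reading off $\ad_X(E) = -\sum_j j X_j$, the preservation condition on the components gives $X_j = 0$ for $j \neq 0$. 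The analogous filtered version yields $\operatorname{Lie}(P)=\fp$.

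For part (2), the first step is to note that $\fp_+$ is a nilpotent Lie subalgebra (the grading forces $[\fp_+,\fp_+] \subset \fg^2$, etc.), so $\exp:\fp_+ \to G$ is a diffeomorphism onto its image $P_+$, which is a closed subgroup. The multiplication map $\mu:G_0 \times \fp_+ \to P$, $\mu(g_0,Z) = g_0 \exp Z$, clearly takes values in $P$ since both factors do.

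The derivative of $\mu$ at $(e,0)$ is the linear isomorphism $\fg_0 \oplus \fp_+ \to \fp$ given by addition, so $\mu$ is a local diffeomorphism near the identity. For injectivity globally, suppose $g_0 \exp Z = g_0' \exp Z'$; then $(g_0')^{-1}g_0 \in G_0 \cap P_+$, and since an element of $P_+$ is of the form $\exp W$ with $W\in \fp_+$ and such an element preserves the grading only when $W=0$ (apply it to the grading element $E$ and use nilpotency of $\ad_W$), we conclude $G_0 \cap P_+ = \{e\}$, giving uniqueness of the decomposition.

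The main obstacle is surjectivity: given an arbitrary $p \in P$, one must produce $(g_0,Z)$. The approach I would take is to show that $P_+$ is normal in $P$ (which follows because $\fp_+$ is an ideal in $\fp$, hence $\Ad_p(\fp_+)\subset \fp_+$ for $p \in P$ and then $pP_+p^{-1}\subset P_+$), form the quotient $P/P_+$, and observe that the composition $G_0 \hookrightarrow P \twoheadrightarrow P/P_+$ is a local diffeomorphism at the identity with trivial kernel; since $P/P_+$ is connected (being a quotient by a normal subgroup whose complement covers a neighborhood of the identity) this composition is an isomorphism, from which one reads off that every $p \in P$ has the required form $p = g_0 \exp Z$. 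Smoothness of the inverse then follows from smoothness of the quotient projection and of $\exp:\fp_+ \to P_+$.
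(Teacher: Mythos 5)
The paper does not prove this Proposition at all --- it quotes it and refers to Yamaguchi \cite{Y} (the proof is also in \cite{parabook}) --- so I compare your argument with the standard one. Your part (1) is essentially the standard argument and is fine, as is the injectivity step in part (2) via $G_0\cap \exp\fp_+=\{e\}$. The genuine gap is in your surjectivity argument: the claim that $P/P_+$ is connected is false in general, and the justification you give for it (``a quotient by a normal subgroup whose complement covers a neighborhood of the identity'') does not make sense. Since $P/P_+\simeq G_0$ is exactly what you are trying to establish, note that $G_0$ is typically disconnected --- in the conformal case it contains $\lambda\in\R\setminus\{0\}$ and $C\in O(p,q)$, in the projective case it is $Gl(m,\R)$; already for $G=Sl(2,\R)$ with the Borel subgroup $P$ one has $P/P_+\simeq\R\setminus\{0\}$. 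What your argument actually shows is only that $G_0\exp\fp_+$ is an open (hence also closed) subgroup of $P$, i.e.\ a union of connected components; that every component of $P$ meets $G_0\exp\fp_+$ is precisely the content of the statement and cannot be obtained from a connectedness shortcut. A smaller but related slip: ``$\fp_+$ is an ideal in $\fp$, hence $\Ad_p(\fp_+)\subset\fp_+$ for all $p\in P$'' is not a valid inference when $P$ is disconnected; the correct (and easier) reason is that $\Ad_p$ preserves the filtration by the very definition of $P$ and $\fg^1=\fp_+$. Also, that $\exp:\fp_+\to P_+$ is a diffeomorphism onto a closed subgroup is asserted rather than proved; it needs the nilpotency of $\mathrm{ad}_Z$ together with injectivity of $\mathrm{ad}$ on the semisimple $\fg$.

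The standard proof of surjectivity, which works with no connectedness hypotheses, uses the grading element $E\in\fg_0$ (which you already invoked in part (1)). One checks that $G_0=\{g\in G\mid \Ad_g(E)=E\}$ and that for every $g\in P$ one has $\Ad_g(E)\in E+\fp_+$: the degree--zero part of $\Ad_g(E)$ must again act as the grading derivation on $\gr(\fg)$, hence equals $E$. Next, the polynomial map $Z\mapsto \Ad_{\exp Z}(E)=E-\sum_j jZ_j+\text{(higher brackets)}$ is a bijection $\fp_+\to E+\fp_+$, since it can be inverted degree by degree. Given $g\in P$ one solves $\Ad_{\exp(-Z)}(E)=\Ad_{g^{-1}}(E)$ for the unique $Z\in\fp_+$ and sets $g_0:=g\exp(-Z)$; then $\Ad_{g_0}(E)=E$, so $g_0\in G_0$ and $g=g_0\exp Z$. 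This explicit construction also makes the smoothness of the inverse of $(g_0,Z)\mapsto g_0\exp Z$ transparent. If you replace your quotient argument by this construction (and justify the properties of $\exp|_{\fp_+}$ as above), your proof becomes complete.
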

The group $P$ is a semidirect product of the reductive subgroup 
$G_0$ and the nilpotent normal subgroup $P_+:=\exp \fp_+$ of $P$.

A \emph{parabolic geometry} is a Cartan geometry of type $(G,P)$, where $G$
and $P$ are as above. If the length of the grading of $\fg$ is $k$, then
the geometry is called \emph{$|k|$--graded}.
Parabolic geometries are infinitesimally effective, but they 
are not effective in general. 

\begin{exam}\emph{Conformal Riemannian structures.} \label{exam-conf}
We take the Cartan geometry of type $(G,P)$ where $G=O(p+1,q+1)$ is the 
orthogonal group and $P$ is the Poincar\'e conformal subgroup. 
Thus the group $G$ is of the form
$$
G=\left\{A \ \Big|\ A 
\left( \begin{smallmatrix}
0&0&1\\0&J&0\\1&0&0 \end{smallmatrix} \right) A^{-1} = 
\left( \begin{smallmatrix}
0&0&1\\0&J&0\\1&0&0 \end{smallmatrix} \right) \right\}
,$$ 
where $J=\left( \begin{smallmatrix}
E_p&0\\0&-E_q \end{smallmatrix} \right)$ 
is the standard product of signature 
$(p,q)$. Its Lie algebra $\fg=\frak o(p+1,q+1)$ can be written as
$$
\fg=\left\{ \left( \begin{smallmatrix}
a&Z&0\\X&A&-JZ^T\\0&-X^TJ&-a \end{smallmatrix} \right)\ \bigg|\ a 
\in \mathbb{R},\ X,Z^T \in\mathbb{R}^{p+q},\ 
A \in \frak o(p,q) \right\}
.$$ 
It leads to a decomposition
$\fg=\fg_{-1} \oplus \fg_0 \oplus \fg_1$, where $\fg_{-1}\simeq \mathbb{R}^{p+q}$, 
$\fg_{0}\simeq \mathbb{R} \oplus \frak o(p,q)$ and $\fg_{1}\simeq \mathbb{R}^{p+q*}$. 
These parts correspond to the block lower triangular part, block diagonal 
part and block upper triangular part and give exactly the $|1|$--grading 
of $\fg$.
The elements of the subgroup 
$$
G_0=\left\{ \left( \begin{smallmatrix}
\lambda&0&0\\0&C&0\\0&0&\lambda^{-1} \end{smallmatrix} \right)
 \ \Big|\ \lambda \in \mathbb{R}\setminus \{0\},\ C \in O(p,q)\right\}
$$
preserve this grading. Elements of the subgroup $P=G_0 \rtimes \exp \fg_{1}$ 
preserving the filtration are of the form
$$
\bigg( \begin{smallmatrix}
\lambda&0&0\\0&C&0\\0&0&\lambda^{-1} \end{smallmatrix} \bigg)\cdot 
\bigg( \begin{smallmatrix}
1&Z& -{1 \over 2}ZJZ^T\\0&E&-JZ^T\\0&0&1 \end{smallmatrix} \bigg)=
\bigg( \begin{smallmatrix}
\lambda &\lambda Z& -{\lambda \over 2}ZJZ^T\\0&C&-CJZ^T\\0&0&\lambda^{-1} 
\end{smallmatrix} \bigg)
.$$ 
We get the $|1|$--graded geometry and its homogeneous model is the conformal 
pseudosphere of the corresponding signature.
\end{exam}

We are mainly interested in the curvature of parabolic geometries
and we discuss here the curvature in more detail.
It has its values in the cochains $C^2(\fg_{-},\fg)$
for the second cohomology $H^2(\fg_{-},\fg)$. This group can be also computed 
as the homology of the codifferential
$\partial^* : \wedge ^{k+1}\fg_{-}^*\otimes\fg\rightarrow
\wedge^{k}\fg_{-}^*\otimes\fg$. On the decomposable elements it is given by
\begin{align*}
\partial^*(Z_0 \wedge \dots \wedge
Z_k \otimes W) =\ &\sum_{i=0}^k(-1)^{i+1} Z_0 
\wedge \cdots \hat i \cdots \wedge
Z_k \otimes [Z_i,W]  + 
\\&\sum_{i<j}(-1)^{i+j}[Z_i,Z_j]\wedge \cdots
\hat i \cdots \hat j \cdots \wedge Z_k \otimes W
\end{align*}
for all $Z_0,\dots, Z_k
\in \fp_+$ and $W \in \fg$, where the hats denote ommission. 
We use here the pairing between $\fg_{-}$ and $\fp_+$ given by the 
Killing form on $\fg$. 

The parabolic geometry is called \emph{normal} if the curvature satisfies
$$
\partial ^* \circ \ka =0
.$$  
If the geometry is normal, we can define the
\emph{harmonic part of curvature} $\ka_H:\ba \rightarrow H^2(\fg_{-},\fg)$
as the composition of the curvature function and the projection to the second
cohomology group.

Thanks to the grading of $\fg$, there are several decompositions of the
curvature of the parabolic geometry. One of the possibilities is the
\emph{decomposition into homogeneous components}, which is of the form
$$
\ka=\sum_{i=-k+2}^{3k}\ka^{(i)}
,$$ 
where $\ka^{(i)}(u)(X,Y) \in \fg_{p+q+i}$
for all $X \in \fg_p,Y\in \fg_q$ and $u \in \ba$. 

The parabolic geometry is
called \emph{regular} if the curvature function $\ka$ satisfies $\ka^{(r)}=0$
for all $r\leq 0$.  The crucial structural description of the curvature is
provided by the following Theorem, see \cite{Y}:

\begin{thm}\label{thm-harmonic-curvature}
The curvature $\ka$ of a regular normal parabolic geometry vanishes if and only 
if its harmonic part $\ka_{H}$ vanishes. 
Moreover, if all homogeneous components of
$\ka$ of degrees less than $j$ vanish identically and there is no
cohomology $H^2_j(\fg_{-},\fg)$, then also the curvature component of degree
$j$ vanishes.
\end{thm}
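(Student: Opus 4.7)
The plan is to argue by induction on the homogeneous degree, combining Kostant's algebraic Hodge decomposition with the differential Bianchi identity to reduce the question to an analysis of harmonic cochains.

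First I would invoke Kostant's theorem: the space $\wedge^2 \fg_-^* \otimes \fg$ admits a $G_0$--equivariant Hodge decomposition
\begin{equation*}
\wedge^2 \fg_-^* \otimes \fg \;=\; \mathrm{im}(\partial) \;\oplus\; \ker(\Box) \;\oplus\; \mathrm{im}(\partial^*),
\end{equation*}
where $\Box = \partial \partial^* + \partial^* \partial$ is the Kostant Laplacian. This decomposition respects the grading by homogeneous degree, and the projection onto the middle summand identifies $\ker(\Box)$ with $H^2(\fg_-,\fg)$; in particular $\ker(\Box)_j \cong H^2_j(\fg_-,\fg)$ in each degree.

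Next, assume that $\ka^{(i)}\equiv 0$ for all $i<j$. Normality gives $\partial^*\ka=0$, and splitting into homogeneous components yields $\partial^*\ka^{(j)}=0$. To obtain also $\partial \ka^{(j)}=0$, I would invoke the Bianchi identity for the curvature function of a Cartan geometry, which in terms of the parallelism expresses $\partial\ka$ as the sum of a term quadratic in $\ka$ and a term involving $\omi$--derivatives of $\ka$. Under regularity and the inductive vanishing hypothesis, every homogeneous piece of this right--hand side has degree strictly greater than $j$, so the degree--$j$ part of the identity reads $\partial \ka^{(j)}=0$. Hence $\ka^{(j)}$ takes values in $\ker(\partial)\cap\ker(\partial^*) = \ker(\Box)_j$.

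The second assertion is then immediate: if $H^2_j(\fg_-,\fg)=0$ then $\ker(\Box)_j=0$, forcing $\ka^{(j)}=0$. The first assertion follows by iterating this inductive step: regularity gives $\ka^{(r)}=0$ for $r\le 0$, so the induction starts at $j=1$; at each stage $\ka^{(j)}$ is harmonic of degree $j$, and the hypothesis $\ka_H\equiv 0$ forces its image in $H^2_j(\fg_-,\fg)$ to vanish, hence via the isomorphism $\ker(\Box)_j \cong H^2_j(\fg_-,\fg)$ we conclude $\ka^{(j)}=0$, and the induction proceeds to the next degree.

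The main obstacle is the precise formulation of the Bianchi identity for $\ka$ and the bookkeeping of homogeneous degrees needed to check that, under regularity, the lowest--order part of $\partial\ka$ reduces to the purely algebraic expression $\partial \ka^{(j)}$. Once this accounting is in place, Kostant's theorem delivers the conclusion essentially for free.
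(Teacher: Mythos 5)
You should note that the paper itself gives no proof of this theorem---it is quoted from the cited literature (Yamaguchi, cf.\ also \v Cap--Schichl and the book referenced as \cite{parabook})---and your argument is exactly the standard proof given there: Kostant's Hodge decomposition $\wedge^2\fg_-^*\otimes\fg=\mathrm{im}(\partial)\oplus\ker(\Box)\oplus\mathrm{im}(\partial^*)$ with $\ker(\Box)_j\cong H^2_j(\fg_-,\fg)$, together with the Bianchi identity showing that the lowest nonvanishing component $\ka^{(j)}$ lies in $\ker(\partial)\cap\ker(\partial^*)=\ker(\Box)$, followed by induction on $j$. The degree bookkeeping you flag does close up: writing the Bianchi identity as $\partial\ka(X,Y,Z)=\sum_{\mathrm{cycl}}\bigl(\ka(\ka_-(X,Y),Z)-(\omi(X)\cdot\ka)(Y,Z)\bigr)$, the quadratic term has homogeneity at least $2j\ge j+1$ (using regularity, $j\ge 1$), and the derivative term has homogeneity at least $j+1$ because differentiating the functions $\ka^{(i)}$, $i\ge j$ (the lower ones vanish identically, so their derivatives do too), does not lower the degree of the values while the extra argument $X$ lies in $\fg_-$; hence the degree-$j$ part of the identity is $\partial\ka^{(j)}=0$, and your sketch is correct and coincides with the proof in the cited source.
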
 

Another possibility is the decomposition of the curvature according to the
values:
$$
\ka=\sum_{j=-k}^{k}\ka_j
.$$ 
In an arbitrary frame $u$ we
have $\ka_j(u) \in \fg_{-} \wedge \fg_{-} \rightarrow \fg_j$. 
The component
$\ka_{-}$ valued in $\fg_{-}$ is exactly the torsion of the geometry.

Remark that in the 
$|1|$--graded case
the decomposition by the homogeneity
corresponds to the decomposition according to the values. The homogeneous
component of degree $1$ coincides with the torsion while the homogeneous
components of degrees $2$ and $3$ correspond to $\ka_0$ and $\ka_1$. 
Thus all $|1|$--graded geometries are clearly regular.

\subsection*{Underlying structures for parabolic geometries and tractor bundles} \label{tract}
It is well known, that the existence of the Cartan connection allows to describe 
the tangent bundle of the base manifold as the associated bundle
$TM \simeq  \ba \times_P \fg /\fp$, where the action of $P$ on $\fg/\fp$ 
is induced by the $\Ad$--action on $\fg$. 
In the case of parabolic geometry, we can use the usual identification
$\fg/\fp \simeq \fg_{-}$. Thus we can write $TM \simeq \ba \times_P \fg_{-}$ 
for the so called $\underline{\Ad}$--action of $P$ on $\fg_{-}$, which is  obviously 
given e.g. by the condition $\underline{\Ad} = \pi \circ \Ad$, where $\pi$ is the 
projection  $\fg \rightarrow \fg_{-}$ in the direction of $\fp$. 
In particular, if we denote 
$p:\ba \rightarrow M$ the projection, then one can see from the properties 
of Cartan connection that 
the mapping $\ba \times \fg_{-} \rightarrow TM$
given by $(u,X) \mapsto Tp.\omi(X)(u)$ factorizes to the bundle isomorphism 
$TM \simeq \ba \times_P \fg_{-}$. We use the notation
$\lz u,X \pz$ for the tangent vectors and more generally, we denote elements 
and sections of arbitrary asociated bundle by the same symbol.

Remark that vector fields can be equivalently understand as
$P$--equivariant mappings $\ba \rightarrow \fg_{-}$, 
so called \emph{frame forms}. Each frame form $s$ describes 
the vector $\lz u, s(u) \pz \in T_{p(u)}M$.
We have similar identifications for the cotangent bundle and arbitrary 
tensor bundles. 

The morphism $\va$ of Cartan geometries induces uniquely by means of 
its base morphism $\underline \va$ the tangent morphism 
$T\underline \va:TM \rightarrow TM'$ and it can be written using the 
previous identification as
$$
T \underline \va (\lz u, X \pz) 
= \lz \va(u),X \pz. 
$$
Again, similar fact works for the cotangent bundle and arbitrary 
tensor bundles and can be rewritten in the language of frame forms. In the 
sequel, we will use both descriptions of tensors and tensor fields.

There is also a different concept of interesting associated bundles available 
for all Cartan geometries. 
First, we can define the \emph{adjoint tractor bundle} $\mathcal{A}M$ as the 
associated bundle $\mathcal{A}M := \ba \times_P \fg$ with respect to the 
adjoint action. The Lie bracket on $\fg$ defines a bundle map 
$$
\{\ ,\ \}:\mathcal{A}M \otimes \mathcal{A}M \rightarrow  \mathcal{A}M
$$ 
which makes any fiber of $\mathcal{A}M$ into a Lie algebra isomorphic to the 
Lie algebra $\fg$. For all $u \in \ba$ and $X,Y \in \fg$
the bracket is defined by 
$$ 
\{ \lz u, X \pz , \lz u,Y \pz \}=\lz u, [X,Y] \pz
.$$

More generally, let $\lambda: G \rightarrow Gl(V)$ be a linear representation. 
We define the \emph{tractor bundle} $\mathcal{V}M$ as the associated bundle 
$\mathcal{V}M:=\ba \times_P V$ with respect to the restriction of the action 
$\lambda$ to the subgroup $P$. In the case of the adjoint representation 
$\Ad: G \rightarrow Gl(\fg)$ we get exactly the adjoint bundle. 
Elements of the associated bundle and also the sections of the bundle are 
called (\emph{adjoint}) \emph{tractors}. 

These bundles allow us to describe nicely structures underlying parabolic 
geometries and descriptions via them will be useful for 
some computations later. 
We will not devote the whole theory. We will only shortly remind basic facts 
which allow us to use tractor language in the future.
Note that one has to start with the representation of the whole $G$ to 
define tractor bundles. There are differences between properties of tractor bundles 
and e.g. tangent bundles. See \cite{TBI, TC} for detailed discussion.

The adjoint tractor bundle $\mathcal{A}M $ acts on an arbitrary tractor bundle
$\mathcal{V}M=\ba \times_P V$. For all $u \in \ba$, $X \in \fg$ and $v \in V$
we define the \emph{algebraic action} 
$$
\bullet : \mathcal{A}M \otimes \mathcal{V}M \rightarrow \mathcal{V}M,
\ \ \ \ 
\lz u,X\pz \bullet \lz u, v \pz=\lz u, \lambda'_X(v) \pz
,$$
where  $\lambda':\fg \rightarrow \frak{gl}(V)$ is the infinitesimal 
representation given by $\lambda: G \rightarrow Gl(V)$. 

In fact, any $G$--representation $\lambda$ gives rise to the natural bundle 
$\mathcal{V}$ on Cartan geometries of type $(G,P)$ and all $P$--invariant 
operations on representations give rise to geometric operations on the 
corresponding natural bundles. Similarly, the $P$--equivariant morphisms 
of representations give corresponding bundle morphisms.

In this way, the projection $\pi:\fg \rightarrow \fg/\fp$ naturally 
induces the bundle projection 
$$
\Pi:\ba \times_P \fg = \mathcal{A}M \longrightarrow TM=\ba \times_P \fg/\fp
$$ 
and we can see the adjoint tractor bundle as an extension of the tangent bundle.

Now, suppose that we have a $|k|$--graded parabolic geometry. The filtration 
of $\fg$ is $P$--invariant and induces the filtration of the adjoint subbundles 
$$ 
\mathcal{A}M =\mathcal{A}^{-k}M \supset \mathcal{A}^{-k+1}M \supset \dots \supset 
\mathcal{A}^{k}M
,$$ 
where $\mathcal{A}^iM:=\ba \times_P \fg^i$.
At the same time, we get the \emph{associated graded bundle} 
$$
\gr(\mathcal{A}M) = 
\mathcal{G} \times_P \gr(\fg) = 
\mathcal{A}_{-k}M \oplus \mathcal{A}_{-k+1}M \oplus 
\dots \oplus \mathcal{A}_{k}M
,$$
 where $\mathcal{A}_{i}M = \mathcal{A}^{i}M /\mathcal{A}^{i+1}M \simeq 
\ba \times_P \fg^{i}/\fg^{i+1}$.
Since the Lie bracket on $\gr(\fg)$ is $P_+$--invariant, 
there is  the \emph{algebraic bracket} on $\gr(\mathcal{A}M)$ defined by means 
of the Lie bracket. This bracket is compatible with the latter bracket 
on the adjoint tractor bundle and we denote both brackets by the same symbol. 
We have 
$$
\{\ ,\ \}: \mathcal{A}_iM \times \mathcal{A}_jM \rightarrow \mathcal{A}_{i+j}M
.$$  

From above we see that $TM \simeq \mathcal{A}M/\mathcal{A}^0M$ and we obtain the induced 
filtration of the tangent bundle 
$$
TM=T^{-k}M \supset T^{-k+1}M \supset \dots \supset T^{-1}M
,$$ 
where $T^iM \simeq \mathcal{A}^{i}M/\mathcal{A}^0M$. 
Again, the filtration of $TM$ gives rise to the associated graded bundle 
$$
\gr(TM)=\gr_{-k}(TM) \oplus \dots \oplus \gr_{-1}(TM)
,$$
 where $\gr_i(TM)=T^iM/T^{i+1}M \simeq \mathcal{A}_iM.$ 
The action of $P_+$ on $\ba$ is free and the quotient 
$\ba/P_+=:\ba_0\rightarrow M$ is a principal bundle with structure group 
$G_0=P/P_+$. 
The action of $P_+$ on $\fg^i/\fg^{i+1}$ is trivial and we have 
$\fg^i/\fg^{i+1} \simeq \fg_i$ as $G_0$--modules. 
We get $\gr_i(TM)\simeq \ba_0 \times_{G_0} \fg_i$ and 
$\gr(TM) \simeq \ba_0 \times_{G_0} \fg_-$. For each $x \in M$, 
the space $\gr(T_xM)$ is the nilpotent graded Lie algebra isomorphic 
to the algebra  $\fg_-$.

Now, we would like to describe the structure underlying parabolic geometries.
These are nicely related to filtered manifolds.
Remark, that a \emph{filtered manifold} is a manifold $M$ together
with a filtration $TM=T^{-k}M \supset \dots \supset T^{-1}M$ such that for
sections $\xi$ of $T^iM$ and $\eta$ of $T^jM$ the Lie bracket $[\xi,\eta]$
is a section of $T^{i+j}M$. 
On the corresponding associated graded bundle we obtain the \emph{Levi bracket} 
$$
\mathcal{L}:\gr(TM) \times \gr(TM) \rightarrow \gr(TM)
,$$ 
which is induced by 
$T^iM \times T^jM \rightarrow \gr_{i+j}(TM)$, 
the composition of the Lie bracket of the latter vector fields with the natural 
projection $T^{i+j}M \rightarrow  \gr_{i+j}(TM)$. 
This depends only on the classes in $\gr_i(TM)$ and $\gr_j(TM)$
and gives a map
$$
\gr_{i}(TM) \times \gr_{j}(TM) \rightarrow \gr_{i+j}(TM)
.$$ 
For each $x \in M$, this makes $\gr(T_xM)$ into a nilpotent graded Lie 
algebra.

For each parabolic geometry, we have described some canonical filtration of $TM$ 
induced from the grading of $\fg$.
It can be proved that geometry is regular if and only if this induced filtration 
of $TM$ makes $M$ into the filtered manifold such that the latter bracket on each 
$\gr(T_xM)$ coincides with $\{\ ,\ \}$.
In particular, we have $\gr(T_xM)\simeq \fg_{-}$ for each $x \in M$, 
see \cite{parabook}.

If we start with a regular parabolic geometry, we get exactly the following data 
on the base manifold:
\begin{itemize}
\item A filtration $\{T^iM\}$ of the tangent bundle such that $\gr(T_xM)\simeq \fg_{-}$
      for each $x \in M$.
\item A reduction of structure group of the associated graded bundle 
      $\gr(TM)$ with respect to $\Ad:G_0 \rightarrow \text{Aut}_\gr(\fg_-)$. 
      (The reduction is trivial in the case $G_{0} = \text{Aut}_\gr(\fg_-)$.)
\end{itemize}
These data are called the underlying \emph{infinitesimal flag structure}.
The proof of the following equivalence between
such infinitesimal flag structures and regular normal parabolic geometries 
can be found in \cite{Y, CS}:

\begin{thm}\label{thm-prolongation} 
Let $M$ be a filtered manifold such that $\gr(T_xM)\simeq \fg_{-}$ for each 
$x \in M$ and let $\ba_0 \rightarrow M$ be a reduction 
of $\gr(TM)$ to the structure group $G_0$. Then there is a regular normal
parabolic geometry $(p:\ba \rightarrow M,\om)$ inducing the given data. 
If $H^1_\ell(\fg_{-},\fg)$ are trivial for all $\ell > 0$ then the normal 
regular geometry is unique up to isomorphism.
\end{thm}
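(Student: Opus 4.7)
The plan is to construct the geometry $(p:\ba\to M,\om)$ by prolongation: extend $\ba_0$ to a principal $P$--bundle and build $\om$ inductively over homogeneity degrees, using the normalization $\partial^*\circ\ka=0$ to fix the gauge at each step. Uniqueness under the hypothesis $H^1_\ell(\fg_{-},\fg)=0$ is then a parallel cohomological rigidity argument.

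For existence, I would first form the principal $P$--bundle $\ba:=\ba_0\times_{G_0}P$ along the inclusion $G_0\hookrightarrow P$. The filtration $\{T^iM\}$ together with the $G_0$--reduction canonically furnishes the $\fg/\fp$--valued part of a Cartan connection via the identification $\gr(T_xM)\simeq\fg_{-}$, and a choice of principal $G_0$--connection on $\ba_0$, combined with the splitting $P\simeq G_0\ltimes P_+$ from Proposition~\ref{rozklad}, gives an initial partial connection on $\ba$ with vanishing $\fp_+$--components. I would then correct the $\fg^\ell$--component of $\om$ inductively in the homogeneity degree $\ell\geq 1$: the tensorial freedom in this component is parametrized by $C^1(\fg_{-},\fg)_\ell$, and changing it by $\xi$ modifies the curvature $\ka^{(\ell)}$ by $\partial\xi$ up to terms of higher homogeneity. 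The Kostant--type Hodge decomposition
$$
\wedge^2\fg_{-}^*\otimes\fg \;=\; \mathcal H \oplus \operatorname{im}\partial \oplus \operatorname{im}\partial^*
$$
shows that $\xi$ can be chosen, uniquely modulo $\ker\partial$, so as to achieve $\partial^*\ka^{(\ell)}=0$. Since the grading of $\fg$ has finite length, the recursion terminates after finitely many steps in a normal regular parabolic geometry inducing the prescribed data.

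For uniqueness, suppose $\om$ and $\tilde\om$ are two normal regular Cartan connections on extensions of $\ba_0$ inducing the same filtered structure. After identifying the $P$--bundles, the difference is a $P$--equivariant horizontal $\fg$--valued $1$--form whose degree--$\ell$ component lies in $\ker\partial^*$ by normality of both sides and simultaneously is forced, by an inductive comparison of the two curvature functions through degree $\ell-1$, to be a coboundary. Its cohomology class then lives in $H^1_\ell(\fg_{-},\fg)$, which vanishes by hypothesis, so the component is zero. Iterating over $\ell$ yields $\om=\tilde\om$ up to the discrete kernel (cf.\ Theorem~\ref{rigidity}).

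The main obstacle is the bookkeeping across the induction: one must verify that each correction in degree $\ell$ leaves the normality established at lower degrees undisturbed and that all $P$--equivariance properties are preserved throughout. The consistency of the recursion ultimately rests on $\partial^2=0$ combined with the Jacobi identity for $\fg$; a complete execution appears in \cite{Y, CS}.
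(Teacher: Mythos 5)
The paper itself does not prove this statement; it is quoted from the prolongation results of \cite{Y, CS}, so your proposal must be measured against those arguments. Your existence sketch does follow their standard scheme (extend $\ba_0$ to the $P$--bundle $\ba_0\times_{G_0}P$, build $\om$ inductively by homogeneity, and use the algebraic Hodge decomposition to arrange $\partial^*\ka=0$ degree by degree), with the usual sketch-level omissions: for $k>1$ the $\fg_-$--valued part of the initial form is \emph{not} canonical -- it requires a choice of splittings of the filtration $\{T^iM\}$ compatible with the reduction -- and one must also verify that the geometry produced is regular, which is where the hypothesis that the Levi bracket on $\gr(T_xM)$ agrees with the bracket of $\fg_-$ enters. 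These points are covered by your deferral to \cite{Y, CS}.

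The uniqueness half, however, is garbled in a way that matters. If $\om$ and $\tilde\om$ are two normal connections inducing the same underlying structure and $\Phi_\ell$ is the lowest homogeneous component of their difference, it is not true that $\Phi_\ell\in\ker\partial^*$ ``by normality'': what normality gives is that the curvatures differ in homogeneity $\ell$ exactly by $\partial\Phi_\ell$, hence $\partial^*\partial\Phi_\ell=0$, and only via the inner product underlying the Hodge decomposition does one conclude $\partial\Phi_\ell=0$, i.e.\ that $\Phi_\ell$ is a \emph{cocycle}. Your claim that it ``is forced to be a coboundary'' and that vanishing of $H^1_\ell$ then makes ``the component zero'' is circular and incorrect: the vanishing of $H^1_\ell(\fg_-,\fg)$ is precisely what upgrades the cocycle to a coboundary $\Phi_\ell=\partial A$, and exactness does not force $\Phi_\ell=0$ -- it is removed by composing with a bundle automorphism covering $\operatorname{id}_M$ (a gauge change by $\exp$ of elements of homogeneity $\ell$, i.e.\ a change of the identification of the two $P$--bundles), which is exactly why the theorem asserts uniqueness \emph{up to isomorphism} rather than $\om=\tilde\om$. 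The appeal to Theorem \ref{rigidity} and the discrete kernel is likewise misplaced: that theorem compares two morphisms covering the same base map, and it is not the mechanism by which the inductive comparison of connections closes. As written, the uniqueness argument would prove the false statement that the two connections coincide on the nose, so this step needs to be reorganized along the lines above.
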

The construction is functorial and the latter Theorem describes an
equivalence of categories.

In the case of $|1|$--graded geometries, the filtration of the tangent bundle 
is trivial. We need only the reduction of $\gr(TM)$ to the structure group $G_0$. 
The $|1|$--graded geometries are automatically regular and we get the
correspondence between normal $|1|$--graded parabolic 
geometries and first order G--structures with structure group~$G_0$.

\subsection*{Symmetries and homogeneous models}
In general, automorphisms of Cartan geometries have to 
preserve the underlying structure given by the 
existence of the Cartan connection. Thus in the case of parabolic 
geometries, automorphisms especially preserve the induced filtration of the 
manifold. In the $|1|$--graded case, they correspond to the automorphisms 
of $G_0$--structures.
We define the symmetry on the parabolic geometry in the following way:
\begin{deff}
Let $(\ba \rightarrow M,\om)$ be a parabolic geometry. 
The (\emph{local}) \emph{symmetry with the center at $x \in M$} 
is a (locally defined) diffeomorphism $s_x$  
on $M$ such that:
\begin{itemize} 
\item[(i)] $s_x(x)=x$, 
\item[(ii)] $T_x s_x|_{T_x^{-1}M}=-\operatorname{id}_{T^{-1}_xM}$,
\item[(iii)] $s_x$ is covered by an automorphism $\va$ of the Cartan geometry, 
i.e. $s_x=\underline \va$ (on some neighborhood of $x$).
\end{itemize}
The geometry is called (\emph{locally}) \emph{symmetric} 
if there is a symmetry at each $x \in M$.
\end{deff}
In this paper, we will discuss only locally defined symmetries and locally 
symmetric geometries. We will omit the word `locally' and we will shortly say 
`symmetry' and `symmetric geometry'.  The relation between locally and 
globally defined symmetries we will discuss elsewhere. 

\smallskip
In other words, symmetries revert by the sign change only
the smallest subspace in the filtration, while their actions on the 
rest are completely determined by the algebraic structure 
of $\fg$~and~$\fp$.
To see that it is not reasonable to define the symmetry as an automorphism
such that its differential reverts the whole tangent space, it suffice to discuss 
the filtration and brackets on $|2|$--graded geometries, see \cite{ja}. See also 
\cite{KZ} for discussion of Cauchy--Riemann structures.

However, we are interested in $|1|$--graded parabolic geometries. In this case, 
the filtration is trivial and thus $T^{-1}M=TM$.  
First two properties in the definition than say that 
symmetries follow completely the classical intuitive idea. 
The third one gives that we can understand the latter defined symmetries 
as symmetries of the corresponding  $G_0$--structure
(and thus of the corresponding geometry). 

The class of $|1|$--graded geometries involves many well known types of 
geometries like conformal and projective structures. There are 
known generalizations of affine locally symmetric spaces to concrete examples of them, 
see e.g. \cite{P} for discussion of the projective case in the classical setup.
We use the universal language for parabolic geometries to study the properties 
of all symmetric $|1|$--graded geometries together. 
We start the discussion with one useful observation, see also \cite{ja}:
\begin{lemma} \label{prvekg} 
If there is a symmetry with the center at $x$ on a $|1|$--graded geometry of type 
$(G,P)$, then there exists an element $g \in P$ such that 
$
{\underline {\rm Ad}}_{g}(X)=-X 
$
for all $X \in \fg_{-1}$, where $\underline \Ad$ denotes the action 
on  $\fg_{-1}$ induced from the adjoint action.

All these elements are of the form $g=g_0\exp Z$, where 
$g_0 \in G_0$ such that $\Ad_{g_0}(X)=-X$
for all $X \in \fg_{-1}$ and $Z \in \fg_1$ is arbitrary. 
\end{lemma}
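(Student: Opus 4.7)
The plan is to lift the symmetry $s_x$ to the Cartan bundle to extract an element of $P$, then translate the condition $T_xs_x = -\operatorname{id}$ on $T^{-1}_xM$ into an algebraic condition on that element, and finally use the decomposition $P = G_0 \exp \fp_+$ from Proposition \ref{rozklad}(2) to separate the $G_0$ and $\fg_1$ parts.

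For the existence part, I would first invoke property (iii) to obtain an automorphism $\varphi$ of $(\ba \to M,\om)$ with $\underline{\varphi} = s_x$, fix any frame $u$ in the fiber over $x$, and use $s_x(x)=x$ to write $\varphi(u) = u \cdot g$ for a uniquely determined $g \in P$. In the $|1|$-graded case $T^{-1}M = TM$ and $\fg_- = \fg_{-1}$, so a generic tangent vector at $x$ has the form $\lz u, X\pz$ with $X \in \fg_{-1}$. Combining the tangent-map formula $T\underline{\varphi}(\lz u, X\pz) = \lz \varphi(u), X\pz$ recalled in the subsection on underlying structures with the equivariance $\lz u \cdot g, X\pz = \lz u, \underline{\Ad}_g(X)\pz$ of the associated bundle gives
\[
T_xs_x(\lz u,X\pz) = \lz u, \underline{\Ad}_g(X)\pz,
\]
so the hypothesis $T_xs_x = -\operatorname{id}$ forces $\underline{\Ad}_g(X) = -X$ for all $X \in \fg_{-1}$.

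For the characterization, Proposition \ref{rozklad}(2) decomposes any $g \in P$ uniquely as $g_0 \exp Z$ with $g_0 \in G_0$ and $Z \in \fg_1$. For $X \in \fg_{-1}$ the series $\Ad_{\exp Z}(X) = e^{\operatorname{ad}Z}(X)$ truncates to
\[
\Ad_{\exp Z}(X) = X + [Z,X] + \tfrac{1}{2}[Z,[Z,X]],
\]
since any further bracket would land in $\fg_i$ with $i\geq 2$, which vanishes in the $|1|$-grading. The three summands lie in $\fg_{-1}$, $\fg_0$, $\fg_1$ respectively, and because $\Ad_{g_0}$ preserves the grading, only the first contributes to the $\fg_{-1}$-component, so $\underline{\Ad}_{g_0\exp Z}(X) = \Ad_{g_0}(X)$ on $\fg_{-1}$. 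Thus the condition $\underline{\Ad}_g|_{\fg_{-1}} = -\operatorname{id}$ is equivalent to $\Ad_{g_0}|_{\fg_{-1}} = -\operatorname{id}$ with $Z \in \fg_1$ completely free, which is the second assertion.

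The only subtlety to watch is the bookkeeping of associated-bundle conventions, in particular the direction of $\underline{\Ad}$ in the equivariance formula $\lz u \cdot g, X\pz = \lz u, \underline{\Ad}_g(X)\pz$; once that is fixed as in the preceding subsection the argument is straightforward, so I do not anticipate a genuine obstacle, only careful bookkeeping.
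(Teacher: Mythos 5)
Your proof is correct and takes essentially the same route as the paper's: lift the symmetry to an automorphism $\va$, read off $g$ from $\va(u)=u\cdot g$ in a fixed frame over $x$, compare coordinates via equivariance of $TM\simeq \ba\times_P\fg_{-1}$ to get $\underline\Ad_g=-\operatorname{id}$ on $\fg_{-1}$, and then split $g=g_0\exp Z$ and observe that $\exp Z$ acts trivially modulo $\fp$ while $\Ad_{g_0}$ preserves the grading. The only differences are cosmetic: you make the triviality of $\underline\Ad_{\exp Z}$ explicit via the truncated series, and your equivariance convention differs from the paper's by $g\leftrightarrow g^{-1}$, which is harmless here since $-\operatorname{id}$ is its own inverse.
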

\begin{proof}
Let $\va$ cover some symmetry with the center at $x \in M$. The symmetry 
$\underline{\varphi}$ preserves the point 
$x$ and hence the morphism $\va$ preserves the fiber over $x$. Let $u$ be an
arbitrary fixed point in the fiber over $x$. There is an element $g \in P$
such that $\varphi(u) = u \cdot g$. We show that this $g$ satisfies the condition.

Let $\xi \in \mathfrak{X}(M)$ be a vector field on $M$. In the point 
$x$ we have
$$
T\underline{\varphi}.\xi(x)=-\operatorname{id}_{T_xM}(\xi(x))=-\xi(x)
.$$ 
We can also write $\xi(x)=\lz u, X \pz $ for a suitable $X \in \fg_{-1}$.
The symmetry simply changes the sign of the coordinates $X$ 
in the chosen frame~$u$
and we get
$$ 
T \underline \va(\lz u,X \pz ) = \lz u,-X \pz 
.$$ 
In the fiber over $x$, the equivariancy gives 
$$
T\underline \va (\lz u,X \pz ) =
\lz\varphi(u),X \pz =\lz ug,X \pz =\lz u, 
\underline{\text{Ad}}_{g^{-1}}(X) \pz 
.$$
Comparing the coordinates in the frame $u$ gives us the action of
element $g \in P$  on $\fg_{-1}$.  We have 
$\underline{\text{Ad}}_{g}(-X)=X$ and thus the action of element $g$ is 
the change of the sign for all elements from $\fg_{-1}$.

Next, because $g \in P$, we have $g=g_0\exp Z$ for some $g_0 \in
G_0$ and $Z \in \fg_1$. 
We have $\underline{\text{Ad}}_{g_0\exp Z}(X)=-X$ for all $X \in \fg_{-1}$. 
But the action of the component $\exp Z$ is trivial while the action of $g_0$
preserves the
grading, i.e. $\underline \Ad_{g_0}=\Ad_{g_0}$, and the element $g_0$ 
satisfies $\Ad_{g_0}(X)=-X$. 
\end{proof}
As a consequence of the proof of the Lemma \ref{prvekg} and 
Theorem \ref{auto} we get the following conditions for homogeneous models:
\begin{prop} \label{symm-homog}
(1)
All symmetries of the homogeneous model $(G \rightarrow G/P, \om_G)$ at 
the origin $o$ are exactly the 
left multiplications by elements $g \in P$
satisfying the condition in the Lemma \ref{prvekg}.
\\(2)
If there is a symmetry at the origin $o$, then the homogeneous model is
symmetric and there is an infinite amount of symmetries at each point.
\\(3)
If there is no such element, then none Cartan geometry of the same type carry 
some symmetry.
\end{prop}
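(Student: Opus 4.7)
The plan is to dispatch parts (1)--(3) in sequence, using Theorem \ref{auto} to reduce every automorphism to a left multiplication and then applying Lemma \ref{prvekg} to extract the algebraic condition on the multiplying element.

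For (1), I begin by observing that an automorphism $\va$ of $(G \to G/P, \om_G)$ covering a symmetry at $o$ must have $\underline{\va}(o) = o$; combined with Theorem \ref{auto}, this forces $\va = L_g$ for some $g \in P$. To turn the differential condition into the algebraic one, I identify $T_o(G/P) \cong \fg/\fp \cong \fg_{-1}$ and compute $T_o L_g$ on a curve $t \mapsto \exp(tX)P$: since $g\exp(tX)g^{-1} = \exp(t\,\Ad_g X)$ and $gP = P$, the coset class of $g\exp(tX)$ equals that of $\exp(t\,\Ad_g X)$, so the differential acts as $\underline{\Ad}_g$ on $\fg_{-1}$. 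Thus $T_o L_g|_{T^{-1}_o M} = -\operatorname{id}$ is precisely the condition in Lemma \ref{prvekg}. Conversely, any such $g \in P$ yields an $L_g$ that fixes $o$, has the required differential, and automatically covers an automorphism of the homogeneous Cartan geometry because left translation preserves $\om_G$.

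For (2), given a symmetry at $o$ realized by $g \in P$, I produce a symmetry at an arbitrary point $hP$ by conjugation. Namely I take $L_{hgh^{-1}}$, which descends to $G/P$ (every left multiplication does) and fixes $hP$ since $hgh^{-1} \cdot hP = hgP = hP$. Writing $L_{hgh^{-1}} = L_h \circ L_g \circ L_{h^{-1}}$ on $G/P$ and using that $L_h$ is itself an automorphism of the homogeneous model (hence preserves the canonical filtration and identifies $T^{-1}_o M$ with $T^{-1}_{hP} M$), I transport $T_o L_g|_{T^{-1}_o M} = -\operatorname{id}$ to $T_{hP} L_{hgh^{-1}}|_{T^{-1}_{hP} M} = -\operatorname{id}$. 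The infinity count is then immediate from the parametrization in Lemma \ref{prvekg}: as soon as any one admissible $g_0 \in G_0$ exists, the family $g_0 \exp Z$ with $Z \in \fg_1$ is infinite (since $\fg_1 \neq 0$ in a genuine $|1|$--grading, by Killing-duality with $\fg_{-1}$), and the same applies at every other point by the above conjugation trick. Part (3) is just the contrapositive of Lemma \ref{prvekg}: if any Cartan geometry of type $(G,P)$ admitted a symmetry at some $x$, that lemma would produce an element $g \in P$ of the required form, so absence of such $g$ rules out symmetries globally.

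The only step requiring any real care is the differential computation in part (1) identifying $T_o L_g$ with $\underline{\Ad}_g$ under $T_o(G/P) \cong \fg/\fp$; once that identity is in hand, the rest of the proposition is a formal consequence of Theorem \ref{auto} and Lemma \ref{prvekg}, and no genuine obstacle is expected.
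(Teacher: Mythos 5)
Your argument is correct and follows essentially the same route the paper intends: Proposition \ref{symm-homog} is stated there as a direct consequence of Theorem \ref{auto} (Liouville) and the proof of Lemma \ref{prvekg}, exactly as you argue, with the identification $T_oL_g=\underline{\Ad}_g$ on $\fg/\fp\simeq\fg_{-1}$ and conjugation by left translations supplying symmetries at every other point. The only point you gloss over is that distinct admissible elements $g_0\exp Z$ may induce the same base symmetry when they differ by an element of the kernel of the geometry; since that kernel is discrete while the admissible family is parametrized by all of $\fg_1$, the set of induced symmetries is still infinite, so your conclusion stands.
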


The Proposition gives us nice and simple criterion to decide, whether the 
$|1|$--graded geometry of given type allow some symmetric geometry or not. 

\begin{exam} \emph{Projective structures.} \label{proj}
We can make two reasonable choices of the Lie group $G$ 
with the $|1|$--graded Lie algebra $\frak{sl}(m+1,\mathbb{R})$. We can consider
$G=Sl(m+1,\mathbb{R})$. Then the maximal $P$ 
is the subgroup of all matrices of the form
$\left( \begin{smallmatrix} d&W\\0& D\end{smallmatrix} \right)$ such that
${\frac1d}=\det D$ for $D \in Gl(m,\mathbb{R})$ and $W \in \mathbb{R}^{m*}$, 
but we take only the connected component of the unit. It consists of all 
elements such that $\det D>0$.

In this setting, the group $G$ acts on rays in $\mathbb{R}^{m+1}$ and $P$ 
is the stabilizer of the ray spanned by the first basis vector. 
Clearly, with this choice $G/P$ is diffeomorphic to the $m$--dimensional 
sphere. The subgroup
$G_0$ contains exactly elements of $P$ such that $W=0$, and this subgroup is
isomorphic to $Gl^+(m,\mathbb{R})$.

The second reasonable choice is $G=PGl(m+1,\mathbb{R})$, the quotient of
$Gl(m+1,\mathbb{R})$ by the subgroup of all multiples of the identity. 
This group acts on $\mathbb{R}P^m$ and as the subgroup $P$ we take the 
stabilizer of the line generated by the first basis vector. Clearly $G/P$
is diffeomorphic to $\mathbb{R}P^m$.
The subgroup $G_0$ is isomorphic to $Gl(m,\mathbb{R})$, because each class
in $G_0$ has exactly one representant of the form 
$\left( \begin{smallmatrix} 1&0\\0& D \end{smallmatrix} \right)$. 

We can make the
computation simultaneously and then discuss the cases separately.
We have 
$\fg=\left\{\left( \begin{smallmatrix}
-tr(A)&Z\\X& A\end{smallmatrix} \right)\ \Big|\ X,Z^T \in 
\mathbb{R}^m,\ A \in \frak{gl}(m,\mathbb{R}) \right\}$ 
and elements from $\fg_{-1}$ look like  $\left( \begin{smallmatrix}
0&0\\X& 0\end{smallmatrix} \right)$.
For each $a=\left( \begin{smallmatrix}
b&0\\0& B\end{smallmatrix} \right) \in G_0$ 
and $V=\left( \begin{smallmatrix}
0&0\\X& 0\end{smallmatrix} \right) \in \fg_{-1}$, 
the adjoint action $\operatorname{Ad}_aV$ is  given by $X \mapsto b^{-1}BX$.
We look for elements $\left( \begin{smallmatrix}
b&0\\0& B\end{smallmatrix} \right)$ such that $BX=-bX$ for each 
$X$. It is easy to see that $B$ is a diagonal matrix 
and that all elements on the diagonal are equal to $-b$. 
Thus we may represent the prospective solution as $\left( \begin{smallmatrix}
1&0\\0& -E\end{smallmatrix} \right)$.

Now we discuss the choice $G=Sl(m+1,\mathbb{R})$ with $G/P\simeq S^m$. 
The element has the
determinant $\pm 1$ and the sign depends on the dimension of the geometry.
If $m$ is even, then the element gives a symmetry but if $m$ is odd, 
then there is no symmetry on this model. 
In the case
of $G=PGl(m+1,\mathbb{R})$, the above element always represents 
the class in $G_0$
and thus yields the symmetry. In both cases, all elements giving a symmetry 
in the origin are of the form 
$\left( \begin{smallmatrix} 1&W\\0& -E\end{smallmatrix} \right)$ 
for all $W \in \mathbb{R}^{m*}$.

These two choices of the groups $G$ and $P$ correspond to 
projective structures on oriented and non--oriented manifolds. 
The non--oriented projective geometries can be symmetric, the homogeneous 
models are symmetric. The existence of a symmetry on
the oriented projective geometry depends on its dimension.
Clearly, the homogeneous model is the oriented sphere with the canonical 
projective structure (represented e.g. by the metric connection of the round
sphere metric) and the obvious symmetries are orientation preserving in the
even dimensions only.
Thus only the even--dimensional geometries can be symmetric. 
Symmetric odd--dimensional oriented
projective geometry does not exist. 
\end{exam}
There is the well known classification 
of semisimple Lie algebras in terms of simple roots and for a given $\fg$, there is a 
complete description of all parabolic subalgebras, see \cite{Y, parabook} for more 
details. This allows to describe all corresponding parabolic geometries (with 
simple Lie group), see \cite{WS,ja} for the list. One can see from the latter example 
that for concrete $|1|$--graded geometry, it depends mainly on the choice of groups 
$G$ and $P$ with the corresponding Lie algebras whether the homogeneous model of 
corresponding $|1|$--graded geometry is symmetric. 

For the geometries from the list, there are standard choices of groups which give 
well know geometries.  One can simply take and discuss this choices for each geometry 
from the list. We saw this discussion 
in the projective case (the case $G=Sl(m,\R)$) and one can also see Example 
\ref{kernel-conf} for the conformal case. In the other cases, the discussion is analogous.
 The other possibility is to ask, whether there is a choice 
of groups with the given $\fg$ and $\fp$ such that the corresponding 
model is symmetric. Again, we saw this approach in the latter Example 
(the case $G=PGl(m,\R)$). 

We remind  here only the best known examples, where is the existence of symmetries 
more or less clear. They are also the most interesting ones. See \cite{ja} for discussion 
of all $|1|$--graded cases. 
\begin{thm} \label{thm-hom}
Homogeneous models of the following $|1|$--graded parabolic geometries are symmetric:
\begin{itemize}
\item almost Grassmannian geometries modeled over the Grassmannians of 
$p$-planes in $\mathbb{R}^{p+q}$ (i.e. of type $(p,q)$) where $p,q \geq 2$, $G=PGl(p+q,\R)$,
\item projective geometries in dim $\geq  2$, $G=PGl(m+1,\R)$,
\item conformal geometries in all signatures in dim $\geq 3$, $G=O(p+1,q+1)$,
\item almost quaternionic geometries, $G=PGl(m+1,\mathbb{H})$.
\end{itemize}
\end{thm}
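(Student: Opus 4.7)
By Proposition \ref{symm-homog}(2), the homogeneous model is symmetric as soon as there is even one symmetry at the origin $o$, and by Proposition \ref{symm-homog}(1) and Lemma \ref{prvekg} such symmetries correspond exactly to elements $g=g_0\exp Z \in P$ with $g_0\in G_0$ satisfying $\Ad_{g_0}(X)=-X$ for all $X\in\fg_{-1}$. The plan is therefore simply to exhibit, in each of the four listed cases, an explicit $g_0\in G_0$ that acts by $-\operatorname{id}$ on $\fg_{-1}$ via the adjoint representation; the choice $Z=0$ then yields the required element $g\in P$.

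The projective case ($G=PGl(m+1,\R)$) has already been settled in Example \ref{proj}: the class of $\left(\begin{smallmatrix}1&0\\0&-E\end{smallmatrix}\right)$ in $PGl(m+1,\R)$ always represents a nontrivial element of $G_0$ (the scalar ambiguity disappears in the projectivization), and the adjoint action on the $\fg_{-1}$-block $X\mapsto b^{-1}BX$ gives $X\mapsto -X$. For the almost Grassmannian case of type $(p,q)$ with $G=PGl(p+q,\R)$, I would use the same idea block-wise: take $g_0$ to be the class of $\operatorname{diag}(E_p,-E_q)$ in $PGl(p+q,\R)$. This is not a scalar matrix (since $p,q\geq 2$, and in fact already if both are $\geq 1$), and $\fg_{-1}$ consists of matrices with a single non-zero block $X$ of size $q\times p$, on which conjugation by $\operatorname{diag}(E_p,-E_q)$ acts as $X\mapsto (-E_q)X E_p^{-1}=-X$. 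The almost quaternionic case $G=PGl(m+1,\mathbb{H})$ is completely analogous: the class of $\operatorname{diag}(1,-E_m)$ lies in $G_0\subset PGl(m+1,\mathbb{H})$ and acts on the column-block description of $\fg_{-1}\simeq \mathbb{H}^m$ by $X\mapsto -X$.

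For the conformal case $G=O(p+1,q+1)$, I would use the explicit description of $G_0$ from Example \ref{exam-conf}. Pick $g_0=\operatorname{diag}(1,-E,1)$, which lies in $G_0$ because $-E\in O(p,q)$. A direct block computation of $g_0 X g_0^{-1}$ for $X\in\fg_{-1}$ (parameterized by a vector $X\in\R^{p+q}$ sitting in the lower-left block, with the forced entry $-X^T J$ in the middle-bottom block) shows that the $(2,1)$-block becomes $-X$ and the $(3,2)$-block correspondingly becomes $-(-X)^T J$, so the whole element is negated.

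No serious obstacle appears: the real content of the theorem is just this case-by-case verification, and the main point to keep track of is that the groups listed are projectivizations (or full orthogonal group), so that the candidate element always represents a genuine class in $G_0$ regardless of parity issues of the sort encountered in Example \ref{proj} for $Sl(m+1,\R)$. The condition $p,q\geq 2$ in the Grassmannian item and the dimension bounds in the projective and conformal items only serve to exclude trivial or overlapping cases; the algebraic construction of $g_0$ works uniformly.
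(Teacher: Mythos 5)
Your proposal is correct and follows essentially the same route as the paper: it reduces the question to the criterion of Proposition \ref{symm-homog} and Lemma \ref{prvekg} and then exhibits, case by case, an element $g_0\in G_0$ acting as $-\operatorname{id}$ on $\fg_{-1}$, exactly as the paper does in Examples \ref{proj} and \ref{kernel-conf} (with the Grassmannian and quaternionic cases handled by the same block computation, which the paper delegates to \cite{ja}). Your explicit verifications in all four cases are accurate.
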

In other words, the latter geometries admit symmetric space -- the flat space~$G/P$.

\section{Torsion restrictions}
Motivated by the affine case, we find some restriction on the curvature of 
$|1|$--graded geometry carrying some symmetry. See \cite{KN2,H} for 
discussion of classical affine locally symmetric spaces in more detail.
We study the curvature of parabolic geometries in a similar way. 
We also give some corollaries of the results coming from the general theory 
of parabolic geometries.

\subsection*{Locally flat geometries.}
Remark that the curvature of a $|1|$--graded geometry is described by the curvature 
function $\ka: \ba \rightarrow \wedge^2\fg^{*}_{-1} \otimes \fg$ and the torsion 
is identified with the part $\ka_{-1}$. This is correctly defined component of the 
curvature, we have just to keep in mind the proper action of $P$ on $\fg_{-1}$.  

The following Proposition is the analogy of the classical result for the affine 
locally symmetric spaces, see also \cite{ja}. 
\begin{prop} \label{torsion-free}
Symmetric $|1|$--graded parabolic geometries are torsion free. 
\end{prop}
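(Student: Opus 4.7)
The plan is to translate the existence of a symmetry at $x$ into an equivariance identity for the curvature function at one frame $u$ over $x$, and then to extract torsion vanishing by projecting onto the $\fg_{-1}$-summand. Since a symmetric geometry has a symmetry at every point, it suffices to do this pointwise.

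Fix $u\in p^{-1}(x)$ and apply Lemma \ref{prvekg} to obtain $g=g_0\exp Z\in P$ with $\Ad_{g_0}(X)=-X$ for all $X\in\fg_{-1}$, so that the lift $\va$ of the symmetry $s_x$ satisfies $\va(u)=u\cdot g$. Because $\va^*\om=\om$ and $\va$ preserves the constant vector fields $\omi(X)$, the very definition of $\ka$ gives $\ka(\va(u))=\ka(u)$, i.e.\ $\ka(u\cdot g)=\ka(u)$. Combining this with the $P$-equivariance
$$\ka(u\cdot g)(X,Y)=\Ad_{g^{-1}}\ka(u)(\underline{\Ad}_g X,\underline{\Ad}_g Y)$$
for $X,Y\in\fg_{-1}\simeq\fg/\fp$, and using that $\underline{\Ad}_g$ acts by $-\operatorname{id}$ on $\fg_{-1}$ so that the two sign flips in the arguments cancel by bilinearity, the identity collapses to $\Ad_g\ka(u)(X,Y)=\ka(u)(X,Y)$ in $\fg$.

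Now I would project this equation onto $\fg_{-1}$. Since $\Ad_g$ preserves $\fp=\fg_0\oplus\fg_1$, the only contribution to the $\fg_{-1}$-component of $\Ad_g\ka(u)(X,Y)$ comes from $\Ad_g$ applied to the torsion value $\ka_{-1}(u)(X,Y)\in\fg_{-1}$. A short computation with $\Ad_{g_0}\Ad_{\exp Z}$ shows that this component equals $-\ka_{-1}(u)(X,Y)$: the $\Ad_{\exp Z}$-correction lies in $\fg_0\oplus\fg_1$ and is killed by the projection, while $\Ad_{g_0}$ flips the sign on $\fg_{-1}$. Hence $-\ka_{-1}(u)(X,Y)=\ka_{-1}(u)(X,Y)$, so $\ka_{-1}$ vanishes at $u$, and by the hypothesis of symmetry at every point it vanishes on all of $\ba$. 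The one point requiring care is to keep straight the two roles of the $P$-action on $\wedge^2(\fg/\fp)^*\otimes\fg$: on the arguments $g$ acts through the quotient and flips the sign, while on the output $\Ad_g$ acts on the full $\fg$ and only preserves the filtration; this asymmetry is exactly what forces the $\fg_{-1}$-component to vanish while leaving $\ka_0$ and $\ka_1$ unconstrained by this argument.
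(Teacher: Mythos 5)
Your proof is correct and follows essentially the same route as the paper: use Lemma \ref{prvekg} for the element $g$ with $\va(u)=u\cdot g$, combine invariance $\ka\circ\va=\ka$ with $P$--equivariance of the curvature function, and read off the sign contradiction on the $\fg_{-1}$--component. Your explicit projection argument (that $\Ad_{\exp Z}$ only contributes terms in $\fp$, which are killed when passing to the $\fg_{-1}$--part) is just an unwound version of the paper's use of the $\underline{\Ad}$--action on $\ka_{-1}$, so there is nothing to add.
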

\begin{proof}
Choose an arbitrary $x \in M$ on a symmetric $|1|$--graded geometry of type 
$(G,P)$ and let $\va$ cover a symmetry with the center at $x$.  
The symmetry $\underline{\va}$ preserves $x$ and thus $\va$ preserves the 
fiber over $x$. The curvature function satisfies $\ka = \ka \circ \va$ and 
we have 
$$
\kappa(u) = \kappa(\varphi(u)) = \kappa(u \cdot g) = g^{-1} \cdot \kappa(u)
$$ 
for suitable $g \in P$ and the same holds for $\ka_{-1}$.
We compare $\ka_{-1}$ in the frames $u$ and $\va(u)$.  
Because $g$ is exactly the element from Lemma \ref{prvekg} which acts 
on $\fg_{-1}$ as $-$id,
we have 
\begin{align*}
\kappa_{-1}(\varphi(u))(X,Y)&=\kappa_{-1}(u \cdot g)(X,Y)=
g^{-1} \cdot \kappa_{-1}(u)(X,Y)=
\\ 
&= \underline \Ad _{g^{-1}}( \kappa_{-1}(u)(\underline 
\Ad _{g}X,\underline \Ad _{g}Y) )=
\\ 
&= -\kappa_{-1}(u)(-X,-Y)=-\kappa_{-1}(u)(X,Y).
\end{align*} 
This is equal to $\kappa_{-1}(u)(X,Y)$ and we obtain 
$\kappa_{-1}(u)(X,Y)= -\kappa_{-1}(u)(X,Y)$ for all $X,Y \in \mathfrak g_{-1}$.  
Thus $\kappa_{-1}(u)$
vanishes and this holds for all frames 
$u \in \mathcal{G}$ over $x$. The torsion then vanishes at $x$.

If the geometry is symmetric, then there is some symmetry at each $x \in M$. 
Then $\ka_{-1}$ vanishes for all $x \in M$ and the geometry is torsion free.
\end{proof}

The theory on harmonic curvature allows us to prove stronger restriction for 
many types of $|1|$--graded parabolic geometries. See \cite{Y,WS} for detailed 
discussion of the theory. 
As a corollary of Theorem \ref{thm-harmonic-curvature} and Proposition 
\ref{torsion-free} we get the following Proposition, see \cite{ja}: 
\begin{prop} \label{homog1}
Let $(\mathcal{G} \rightarrow M, \om)$ be a normal $|1|$--graded parabolic 
geometry such that its homogeneous components of
the harmonic curvature are only of degree $1$. If there is a  
symmetry at a point $x\in M$, then the whole curvature vanishes in this point. 
In particular, if the geometry is symmetric than it 
is locally isomorphic with the homogeneous model.
\end{prop}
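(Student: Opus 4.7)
The plan is to combine the pointwise output of the proof of Proposition~\ref{torsion-free} with the rigidity provided by Theorem~\ref{thm-harmonic-curvature}. The argument already used for Proposition~\ref{torsion-free} needs only one symmetry at $x$, and shows that on the whole fibre over $x$ the torsion vanishes. In the $|1|$--graded setting this torsion coincides with the homogeneous component $\ka^{(1)}$, so I start the argument from $\ka^{(1)}(u)=0$ for every frame $u$ over $x$.

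Next, I read the hypothesis on $\ka_H$ as a statement about the Lie algebra cohomology: it forces $H^2_j(\fg_{-},\fg)=0$ for every $j\ne 1$, and in the $|1|$--graded range this is $H^2_2=H^2_3=0$. The harmonic projection of $\ka(u)$ only sees the class of $\ka^{(1)}(u)$, which already vanishes over $x$, so $\ka_H(x)=0$.

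The core step is then to run the moreover clause of Theorem~\ref{thm-harmonic-curvature} iteratively at $x$: using $\ka^{(1)}=0$ with $H^2_2=0$ gives $\ka^{(2)}(x)=0$, and a second application with $H^2_3=0$ together with the now trivial $\ka^{(1)}$ and $\ka^{(2)}$ gives $\ka^{(3)}(x)=0$. Since in the $|1|$--graded case the decomposition by homogeneity and the decomposition by values of $\ka$ coincide, the full curvature function vanishes at $x$. The ``in particular'' clause follows at once: on a symmetric geometry the same argument applies at every point, so $\ka\equiv 0$ globally, and Theorem~\ref{thm-locally-flat} yields the local isomorphism with $(G\to G/P,\om_G)$.

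The main obstacle is the localisation of the moreover clause at $x$, since as stated in the text it assumes identical vanishing of the lower-degree components. I plan to justify the pointwise version by running the underlying Bianchi-plus-normality step directly on the fibre over $x$ in the $|1|$--graded case, using that the codifferential $\partial^*$ preserves homogeneity so that normality decouples degree by degree, and that the covariant derivative of $\ka^{(1)}$ at $x$ can be controlled via the equivariance of the fundamental derivative under the symmetry.
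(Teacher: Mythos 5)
Your overall route is the same as the paper's: the paper obtains this statement by combining the sign argument of Proposition \ref{torsion-free} (which kills the degree-one, hence harmonic, part of the curvature along the fibre over $x$) with Theorem \ref{thm-harmonic-curvature}, referring to \cite{ja} for the details. Your opening steps (vanishing of $\ka^{(1)}$, hence of $\ka_H$, over $x$; the ``in particular'' clause for symmetric geometries, where $\ka_H$ vanishes identically and Theorem \ref{thm-locally-flat} applies) are correct and coincide with that argument.

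The problem is your localisation of the ``moreover'' clause, which you rightly identify as the delicate point but do not actually repair. You propose to run the Bianchi-plus-normality induction at the single point $x$ and to control the covariant derivative of $\ka^{(1)}$ at $x$ by equivariance under the symmetry. This fails for a parity reason: the differential of the symmetry at $x$ is $-\operatorname{id}_{T_xM}$, so an invariant tensor at $x$ is forced to vanish only when it has an odd total number of slots. The torsion has three slots, which is exactly why it dies at $x$; its first covariant derivative (equivalently the terms $\omi(X)\cdot\ka^{(1)}$ at frames $u$ over $x$, or the fundamental derivative applied to $\ka^{(1)}$) has four slots, picks up the sign $(-1)^4=+1$, and is therefore not constrained at all -- differentiating the invariance relation $\ka\circ\va=\ka$ at $u\in p^{-1}(x)$ and using $\ka^{(1)}(u)=0$ reduces these terms to a tautology. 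Consequently the homogeneity-two component of the Bianchi identity at $x$ still contains uncontrolled first-derivative terms of $\ka^{(1)}$, so you cannot conclude $\partial\ka^{(2)}(x)=0$; normality together with $H^2_2(\fg_-,\fg)=0$ then only yields $\ka^{(2)}(x)\in\operatorname{im}\partial^*$, not $\ka^{(2)}(x)=0$, and the subsequent step for $\ka^{(3)}$ inherits the same defect. So the pointwise claim is not established by your argument; closing it requires a genuinely different idea rather than equivariance control of $\nabla\ka^{(1)}$ (the paper itself does not localise the induction either, but passes directly through Theorem \ref{thm-harmonic-curvature} and the reference \cite{ja}).
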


The harmonic curvature can be defined only for regular normal geometries. 
It is much simpler object then the whole curvature and  the main feature is that 
it is possible to find algorithmically all its components. 

From the point of view of underlying structures, 
the condition `normal' is  assumption of technical character. 
For an appropriate $G_0$--structure on the manifold we can find  $|1|$--graded 
normal parabolic geometry inducing given structure. 
If we take symmetries as morphisms of the concrete underlying geometry, 
this assumption gives us no restriction.

\begin{rem} \label{homog3}
It is easy to see, that similar arguments apply in the case when the only 
homogeneous components are of degree $3$, see Theorem \ref{thm-harmonic-curvature}.
Clearly, the part $\ka_{1}$ vanishes for the same reason as the torsion.
\end{rem}

For all parabolic geometries, one can compute the corresponding components of 
the harmonic curvature. Computation of the cohomology of (complex) Lie algebras is based 
on Kostant's version of the Bott--Borel--Weil theorem and the algorithm can be found in
\cite{J,Josef}.
This allows us to list explicitely, which geometries satisfy the above condition on the 
harmonic curvature and which not. One gets that most of $|1|$--graded geometries has homogeneous components only of degree $1$ and has to be locally flat, if they are symmetric.
See \cite{ja} for discussion of all $|1|$--graded geometries. Only geometries from the list in 
Theorem \ref{thm-hom} are more interesting. Some of them satisfy conditions from the Proposition \ref{homog1} or Remark \ref{homog3}:

\begin{cor}
 Symmetric  normal $|1|$--graded geometries of the following types are locally flat:
\begin{itemize} 
\item almost Grassmannian geometries such that $p>2$ and $q>2$, 
\item conformal geometries in all signatures of dimension $3$, 
\item projective geometries of dimension $2$. 
\end{itemize}
\end{cor}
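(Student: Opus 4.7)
The plan is to apply Proposition \ref{homog1} and Remark \ref{homog3} case by case, with the bulk of the work being the verification that the harmonic curvature of each listed geometry concentrates in a single homogeneity degree (either $1$ or $3$). This is purely a cohomological fact about $H^2(\fg_-,\fg)$ that one obtains from Kostant's version of the Bott--Borel--Weil theorem, and the author has already cited \cite{J,Josef} for the algorithm. I would not carry out those computations in detail; instead I would quote the result of Kostant's algorithm for each Dynkin diagram / crossed node combination in the list.

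For the almost Grassmannian case with $p>2$ and $q>2$: Kostant's theorem applied to the type $A_{p+q-1}$ diagram with the $p$-th node crossed gives that all irreducible components of $H^2(\fg_-,\fg)$ sit in homogeneity degree $1$ (the torsion type); the higher ``Weyl''-type components that appear for $p=2$ or $q=2$ are ruled out precisely by the hypothesis $p,q>2$. Hence the harmonic curvature is concentrated in degree $1$, and Proposition \ref{homog1} immediately yields local flatness.

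For conformal geometries in dimension $3$: it is classical (and also a Kostant computation on $B_2$ or $D_2$ type with one node crossed in the appropriate low-rank form) that the Weyl-tensor component of $H^2$ vanishes identically in dimension $3$, and the only surviving harmonic component is the Cotton--York tensor, which lies in homogeneity degree $3$. Similarly, for projective geometries in dimension $2$ the projective Weyl component disappears and the unique harmonic component is of degree $3$. In both cases, Proposition \ref{torsion-free} already kills the torsion, so the hypothesis of Remark \ref{homog3} applies, and Theorem \ref{thm-harmonic-curvature} then forces $\ka\equiv 0$ at every point $x\in M$.

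The main obstacle is purely bookkeeping: matching the conventions of the excerpt (homogeneity grading vs.\ cohomological bidegree, choice of the crossed node encoding the parabolic $\fp$, conventions for $(p,q)$ labelling of Grassmannians) with the output of the Kostant algorithm as presented in \cite{J,Josef}. Once that is done, the three bullet points reduce to three one-line citations, and the conclusion ``symmetric $\Rightarrow$ locally flat'' follows point-by-point via Proposition \ref{homog1} (first bullet) or Remark \ref{homog3} combined with Proposition \ref{torsion-free} (second and third bullets), giving in each case local isomorphism with the homogeneous model $G/P$ from Theorem \ref{thm-locally-flat}.
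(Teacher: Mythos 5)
Your proposal is correct and follows essentially the same route as the paper: quote the Kostant/Bott--Borel--Weil computation of $H^2(\fg_-,\fg)$ to see that the harmonic curvature is concentrated in homogeneity $1$ for almost Grassmannian structures with $p,q>2$ and in homogeneity $3$ for $3$--dimensional conformal and $2$--dimensional projective structures, and then apply Proposition \ref{homog1}, respectively Proposition \ref{torsion-free} together with Remark \ref{homog3} and Theorem \ref{thm-harmonic-curvature}. The only slip is cosmetic: $3$--dimensional conformal structures correspond to the $B_2$ algebra $\frak o(p+1,q+1)$ with $p+q=3$ (not $D_2$), which does not affect the argument.
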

The other geometries from the Theorem \ref{thm-hom} are the only $|1|$--graded geometries 
which can carry symmetries in the points with nonzero curvature. 
\begin{thm} \label{interesting}
The following normal $|1|$--graded geometries can admit a symmetric space, 
which is not locally isomorphic to the homogeneous model of the same type:
\begin{itemize} 
\item projective geometries of dim $> 2$,
\item conformal geometries in all signatures of dim $> 3$, 
\item almost quaternionic geometries,
\item almost Grassmannian structures such that $p=2$ or $q=2$.
\end{itemize}
\end{thm}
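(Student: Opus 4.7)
The plan is to combine the sign analysis already used in Proposition~\ref{torsion-free} with the Kostant/Bott--Borel--Weil classification of harmonic curvature components, and then to exhibit classical examples realizing each of the four listed geometries with nonvanishing curvature.

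First, I would extend the sign analysis of Proposition~\ref{torsion-free} to all three value-components $\ka_{-1}$, $\ka_{0}$, $\ka_{1}$. By Lemma~\ref{prvekg} the element covering a symmetry factorises as $g=g_0\exp Z$ with $\Ad_{g_0}|_{\fg_{-1}}=-\mathrm{id}$. Since $g_0$ is a graded Lie algebra automorphism and $[\fg_{-1},\fg_1]\subset\fg_0$, this forces $\Ad_{g_0}|_{\fg_0}=+\mathrm{id}$ and $\Ad_{g_0}|_{\fg_1}=-\mathrm{id}$. Counting signs for two arguments from $\fg_{-1}$ and one value in $\fg_j$, one sees that $\ka_{-1}$ (homogeneity~$1$) and $\ka_{1}$ (homogeneity~$3$) must vanish at the centre of a symmetry (this also recovers Remark~\ref{homog3}), while the Weyl-type component $\ka_0$ of homogeneity~$2$ is unconstrained. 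Thus a nonzero $\ka_0$ is the only route to a nontrivial symmetric geometry.

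Second, using Kostant's version of the Bott--Borel--Weil theorem as in \cite{J,Josef,Y}, I would verify that precisely for the four listed geometries the harmonic curvature $\ka_{H}$ contains a homogeneous component of degree~$2$: the projective Weyl tensor, the conformal Weyl tensor, the quaternionic Weyl tensor, and the Grassmannian Weyl curvature respectively. Combined with Theorem~\ref{thm-hom}, which supplies the required element $g\in P$ at the origin, this shows that the algebraic obstruction coming from the symmetry does not force $\ka_{H}$ to vanish, so that by Theorem~\ref{thm-harmonic-curvature} the full curvature need not vanish either.

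Finally, to confirm that such non-flat symmetric geometries really do exist rather than only being algebraically admissible, I would point to classical examples underlying each of the four types: an irreducible Riemannian symmetric space of rank $\geq 2$ yields a conformal symmetric geometry with nonvanishing Weyl tensor; a non-projectively-flat affine locally symmetric space (for instance a nonflat irreducible affine symmetric space) yields a projective symmetric geometry; the Wolf spaces provide quaternionic symmetric geometries with nonzero quaternionic Weyl component; and the Grassmannians $\mathrm{Gr}(2,n)$, viewed as almost Grassmannian structures, are symmetric but not flat. The main obstacle is this last step: in each case one must verify that the classical geodesic symmetry really is covered by an automorphism of the associated normal parabolic Cartan geometry, and that the induced normal Cartan curvature is nonzero. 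The first two steps are essentially algebraic and follow from the Weyl structure machinery of \cite{WS,parabook}; the genuinely geometric content of the theorem sits in the explicit verification for these classical models.
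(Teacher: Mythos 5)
Your first two steps are essentially the paper's own argument: the paper justifies Theorem \ref{interesting} purely by the observation that these four types are precisely the normal $|1|$--graded geometries whose harmonic curvature admits a homogeneous component of degree $2$ (computed via Kostant's version of the Bott--Borel--Weil theorem, \cite{J,Josef}), so that neither Proposition \ref{homog1} nor the argument of Remark \ref{homog3} forces local flatness; the paper reads ``can admit'' as this absence of obstruction and defers genuine non-flat examples to \cite{ja,ja-dga07,javojta}. One inaccuracy in your step one: the claim that $\ka_{1}$ must vanish at the centre of any symmetry is too strong. The covering element is $g=g_0\exp Z$ with $Z\in\fg_1$ possibly nonzero, and $\Ad_{\exp(-Z)}$ maps the $\fg_0$--part of the curvature value into $\fg_1$, so the components do not transform by a pure sign. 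For a torsion-free geometry the equivariance $\ka(u)=\Ad_{g^{-1}}\bigl(\ka(u)\bigr)$ at the fixed fibre gives $\ka_1(u)(X,Y)=-\tfrac12\,[Z,\ka_0(u)(X,Y)]$ rather than $\ka_1(u)=0$; vanishing of the degree-$3$ part follows only when $\ka_0$ vanishes (the hypothesis of Remark \ref{homog3}) or in the frames of the $\va$--invariant Weyl structure, where $Z=0$. This does not affect the theorem, since only the degree-$2$ component matters, but note that $\ka_1$ is not a $P$--invariantly defined component when $\ka_0\neq 0$.

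The genuine gap is in your third step, which goes beyond what the paper proves. Your almost Grassmannian example fails: $\mathrm{Gr}(2,n)$ with its standard structure \emph{is} the homogeneous model of the almost Grassmannian geometry of type $(2,n-2)$, so its canonical Cartan geometry is flat; its nonvanishing curvature as a Riemannian symmetric space is irrelevant to flatness of the parabolic geometry. Producing a non-flat Grassmannian symmetric space requires an actual construction, which is carried out in \cite{javojta}, not here. Your conformal, projective and quaternionic examples are reasonable (non-conformally-flat irreducible Riemannian symmetric spaces, non-projectively-flat affine symmetric spaces, Wolf spaces other than $\mathbb{H}P^n$), and the verification you flag as the main obstacle --- that the classical symmetry is covered by an automorphism of the normal Cartan geometry --- is in fact automatic from the functorial equivalence of Theorem \ref{thm-prolongation}, since such a symmetry preserves the underlying structure. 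So if you intend the theorem as an existence statement, the missing ingredient is a correct non-flat example in the Grassmannian case (and pseudo-Riemannian examples if you want all signatures in the conformal case); if you intend it, as the paper does, as the statement that the harmonic-curvature theory imposes no restriction on these four types, your steps one and two already suffice.
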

The main property of this geometries is that they allow some homogeneous component 
of harmonic curvature of  degree $2$.
We have no restriction for these geometries coming from the theory of 
harmonic curvature.

\section{Weyl Structures and Symmetries}
We study the action of symmetries on so called Weyl structures, which 
provide more convenient understanding of the underlying structure on the manifold. 
See \cite{WS} for more detailed discussion on Weyl structures. We also 
discuss symmetries on effective geometries in a little more detail.

\subsection*{Weyl structures.}
Weyl structures are our main tool to deal with the interesting geometries.
They exist for all parabolic geometries.
We describe Weyl structures only in the $|1|$--graded case, 
general theory can be found in~\cite{WS, parabook}. 

Remind that there is the \emph{underlying bundle} $\ba_0:=\ba/\exp\fg_1$ for 
each $|1|$--graded geometry, which 
is the principal bundle $p_0:\ba_0 \rightarrow M$ with structure group $G_0$. 
At the same time we get the principal bundle $\pi: \ba \rightarrow \ba_0$ with 
structure group $P_+=\exp\fg_1$.

The \emph{Weyl structure} $\si$ for a $|1|$--graded geometry $(p:\ba \rightarrow M, \om)$ 
is a global smooth $G_0$--equivariant section of the projection 
$\pi:\ba \rightarrow \ba_0$. There exists some Weyl structure 
$\si: \ba_0 \rightarrow \ba$ for each $|1|$--graded geometry and for arbitrary two
Weyl structures $\si$ and $\hat \si$, there 
is exactly one $G_0$--equivariant mapping $\Upsilon:\ba_0\rightarrow \fg_1$ 
such that 
$$
\hat \si(u)= \si(u)\cdot \exp\Upsilon(u)
$$ 
for all $u \in \ba_0$. The equivariancy allows to extend $\U$ to $P$--equivariant 
mapping $\ba \rightarrow \fg_1$ and in fact, it is a $1$--form on $M$.
Weyl structures form an affine space modeled over the vector space of all 
$1$--forms and in this sense we can write $\hat \si = \si + \U$. 

The choice of the Weyl structure $\si$ induces the decomposition of all 
tractor bundles into $G_0$--invariant pieces. In particular, the adjoint 
tractor bundle splits as 
$$
\mathcal{A}M= TM \oplus {\rm End}_0(TM) \oplus T^*M
,$$
which is compatible with the latter facts on underlying structures. 
Thus ${\rm End}_0(TM)$ is the appropriate subbundle of ${\rm End}(TM)$ 
and the algebraic bracket of a vector field with a $1$--form becomes 
an endomorphism on $TM$. 
 
The choice of the Weyl structure $\si$ also defines the decomposition 
of the $1$--form 
$\si^*\om \in \Omega^1(\ba_0,\fg)$ such that 
$$
\si^*\om=\si^*\om_{-1}+\si^*\om_{0}+\si^*\om_{1}
.$$
The part $\si^*\om_0 
\in \Omega^1(\ba_0,\fg_0)$ defines the 
principal connection on $p_0:\ba_0 \rightarrow M$, the \emph{Weyl connection}.  
For each linear representation $V$
we get the induced Weyl connection $\na^\si$ on $\ba_0 \times_{\lambda} V$ and 
for arbitrary two Weyl structures $\si$ and $\hat \si = \si \cdot \exp \U$,
there is the explicit formula for the change of corresponding connections
$\na^\si$ and $\na^{\hat \si}$. For a vector field $\xi \in \frak X(M)$ and 
a section $s$ of $\ba_0 \times_{\lambda}V$ we have 
$$
\na^{\hat \si}_\xi(s)=\na^\si_\xi(s) + \{\xi,\U\} \bullet s
.$$
The algebraic bracket of a vector field with a $1$--form  becomes an endomorphism 
on $TM$ and $\bullet$ is just the algebraic action derived from $\lambda$.

Among general Weyl structures, there is an interesting class of them which are
crucial in the sequel -- the normal Weyl structures.
We define the \emph{normal Weyl structure} at $u$ as the only 
$G_0$--equivariant section $\si_u:\ba_0 \rightarrow  \ba$ 
satisfying 
$$
\si_u \circ \pi \circ {\rm Fl}_1^{\omi(X)}(u) = {\rm Fl}_1^{\omi(X)}(u)
.$$ 
Clearly, normal Weyl structures are defined locally over some neighborhood of $p(u)$ 
and can be used only for discussion of local properties. They are closely related 
to the normal coordinate systems for parabolic geometries and generalize the affine 
normal coordinate systems, see \cite{parabook,CSZ,V}.

Finally, let us shortly concentrate on automorphisms.
For each automorphism $\va$ of the geometry, there is the pullback 
$\va^*\si=\va^{-1}\circ \si \circ \va_0$ of the Weyl structure $\si$, 
where $\va_0$ is the \emph{underlying automorphism} on $\ba_0$ induced 
by $\va$. This is again some Weyl structure and there 
is exactly one $\U$ such that $\va^*\si = \si + \U.$ 
In addition, the pullback respects the affine structure, i.e. 
$$
\va^*(\si + \U)=\va^*\si + \va^*\U
.$$ 
There is also a crucial fact, that the pullback of normal Weyl structure is 
again normal Weyl structure. It is easy to see it from the following computation:
\begin{align*}
\va^*\si_u \circ \pi \circ {\rm Fl}_1^{\omi(X)}(u) 
&= \va^{-1} \circ \si_u \circ \va_0 \circ \pi \circ {\rm Fl}_1^{\omi(X)}(u)= 
\\& =\va^{-1} \circ \si_u \circ \pi \circ {\rm Fl}_1^{\omi(X)}(\va(u))=  
\\& = \va^{-1} \circ \va \circ  {\rm Fl}_1^{\omi(X)}(u) = {\rm Fl}_1^{\omi(X)}(u).
\end{align*}
Thus the pullback $\va^*\si_u$ again satisfies the conditions 
on normal Weyl structures.

\subsection*{Actions of symmetries on Weyl structures.}
Let us return to the $|1|$--graded geometries carrying some symmetries.
We discuss the action of coverings of symmetries on Weyl structures.

\begin{prop} \label{upsilon}
Let $(\ba \rightarrow M,\om)$ be a $|1|$--graded geometry and
suppose there is a symmetry 
with the center at $x \in M$
covered by some automorphism $\va$.
There is a Weyl structure $\si$ such that 
$$
\va^*\si \vert_{p^{-1}_0(x)}=\si \vert_{p^{-1}_0(x)}
.$$
Thus in the fiber over $x$,
the pullback of  this $\si$ along $\va$ equals to the 
same Weyl structure $\si$. 
\end{prop}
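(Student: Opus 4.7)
The plan is to exploit the affine structure on the space of Weyl structures. Fix an arbitrary Weyl structure $\tilde\si$, which exists by the general theory. Since $\va^*\tilde\si$ is again a Weyl structure, there is a unique one--form $\U \in \Omega^1(M)$ with $\va^*\tilde\si = \tilde\si + \U$. I look for the desired $\si$ in the form $\si = \tilde\si + \Psi$ for some one--form $\Psi$ to be chosen, use compatibility of the pullback with the affine structure to get
$$\va^*\si - \si = \U + \va^*\Psi - \Psi,$$
and then pin down the single condition that $\Psi$ must satisfy at $x$.

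The claim that two Weyl structures agree on the whole fiber $p_0^{-1}(x)$ is equivalent to the statement that their difference, viewed as a one--form on $M$, vanishes as a cotangent vector at $x$. Indeed, such a one--form corresponds to a $G_0$--equivariant map $\ba_0 \to \fg_1$, and $G_0$--equivariance implies that a map of this kind vanishes on $p_0^{-1}(x)$ if and only if it vanishes at any single point of that fiber. Hence my requirement reduces to
$$\bigl(\U + \va^*\Psi - \Psi\bigr)(x) = 0 \quad \text{in } T_x^*M.$$

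Using $T_x\underline\va = -\operatorname{id}_{T_xM}$ from the definition of a symmetry (recall $T^{-1}_xM = T_xM$ in the $|1|$--graded case), I immediately get $(\va^*\Psi)(x) = -\Psi(x)$, so the condition simplifies to $\U(x) = 2\Psi(x)$. It therefore suffices to pick any smooth one--form $\Psi$ on $M$ with $\Psi(x) = \tfrac12\U(x)$, which one can produce by a bump--function extension of a single prescribed cotangent vector. Then $\si := \tilde\si + \Psi$ has the property asserted in the proposition.

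There is no genuine obstacle in this approach; the argument is essentially algebraic once the affine structure on Weyl structures and its compatibility with pullback are in hand. The only piece of bookkeeping worth singling out is the dictionary between ``agreement on the fiber $p_0^{-1}(x)$'' and ``coincidence of cotangent vectors at $x$'', which is what collapses the seemingly fiber--wise equation $\va^*\si - \si = 0$ on $p_0^{-1}(x)$ to one linear equation at a single point.
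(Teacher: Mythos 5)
Your proof is correct and essentially the paper's argument: both exploit the affine structure of Weyl structures, the compatibility $\va^*(\si+\Psi)=\va^*\si+\va^*\Psi$, and the fact that pullback along the symmetry acts as $-\operatorname{id}$ on one--forms at $x$, leading to the choice $\Psi(x)=\tfrac12\U(x)$ (the paper simply takes $\si=\hat\si+\tfrac12\U$ globally, so no bump--function extension is needed). The only cosmetic difference is that you obtain $(\va^*\Psi)(x)=-\Psi(x)$ directly from $T_x\underline\va=-\operatorname{id}_{T_xM}$ on the base, while the paper derives the same sign change on the fiber $p_0^{-1}(x)$ via $G_0$--equivariance and the element $g_0$ from Lemma \ref{prvekg}; both routes, and your reduction of fiberwise agreement to vanishing of the difference at the single point $x$, are sound.
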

\begin{proof}
Choose an arbitrary Weyl structure $\hat \sigma$
and compute the pullback of this 
structure along $\varphi$. The result is another Weyl structure 
$\varphi^*\hat \sigma=\hat \sigma +\Upsilon.$ 
We prove that the Weyl structure $\hat \si + {1 \over 2} \U$ 
satisfies the condition.
We compute
\begin{align*}
\varphi^*(\hat \sigma + {1 \over 2}\Upsilon) 
&= \va^*\hat \si + \va^*{1 \over 2}\Upsilon= 
\hat \si + \U + {1 \over 2} \va^*\Upsilon 
\end{align*}
and it suffices to show that
$
\va^*\U(u)= (\U \circ \va_0)(u)=-\U(u)
$
holds for 
$u \in p_0^{-1}(x) \subset \ba_0$. 
Clearly, $\va_0:\ba_0\rightarrow \ba_0$ preserves $p_0^{-1}(x)$ and in fact, 
it is equal to the right action of some suitable element from $G_0$. 	
This is exactly the element $g_0$ from Lemma \ref{prvekg} corresponding 
to the frame $u$. Thanks to the equivariancy and the fact that 
the values of $\U$ are in $\fg_1$ (the dual of $\fg_{-1}$), the action of the 
element changes the sign and we get 
$(\U \circ \va_0)(u)=\U(ug_0)=-\U(u)$. 

At the point $x$ the latter fact gives 
$$
\hat \si + \U + {1 \over 2} \va^*\Upsilon 
= \hat \si +\U - {1 \over 2}\U= \hat \si+{1 \over 2}\U
.$$ 
If we put all together we get 
$\va^*(\hat \si+{1 \over 2}\U)=\hat \si+{1 \over 2}\U$ at $x$ and thus
the action of $\va$ preserves the Weyl structure 
$\si := \hat \si+{1 \over 2}\Upsilon$ in the fiber over $x \in M$. 
\end{proof}

In the sequel, we call any such Weyl structure \emph{invariant with 
respect to $\va$ at $x$} or shortly \emph{$\va$--invariant at $x$}.

There can be more than one invariant Weyl structure at $x$ with 
respect to the same $\va$, but all of them have to coincide at $x$. 
Let $\si, \bar \si$ be different $\va$--invariant Weyl structures at $x$.
We know that $\bar \si = \si + \U$ for some in general nonzero 
$\U:\ba_0 \rightarrow \fg_1$. At the point $x$ we get
$$
\bar \si=\va^*\bar \si= \va^*(\si + \U)=\va^*\si+\va^*\U=\si+\va^*\U
.$$ 
The relation $\si+\U=\si+ \va^* \U$ implies $\U=\va^*\U$ at $x$. 
Because $\va^*\U=-\U$ holds in the fiber over $x$ we get the 
vanishing of $\U$ at $x$.
In general, we know nothing about the neighborhood of $x$.
 
Remark that if $\si$ is $\va$--invariant Weyl structure at $x$ then 
$\va^*\si$ is again $\va$--invariant 
at $x$ because $\va^*\si=\si$ implies $\va^*(\va^*\si)=\va^*\si$ at $x$.
We can shortly say, that the pullback along $\va$ permutes all Weyl 
structures invariant with respect to $\va$ at $x$.
In addition, we can prove the following Theorem.
\begin{thm}
Suppose that $\va$ covers some symmetry with the center at $x$ 
on a $|1|$--graded geometry. There is exactly one normal Weyl 
structure $\si_u$ such that 
$$
\va^*\si_u=\si_u
$$ 
over some neighborhood of the center $x$.    
\end{thm}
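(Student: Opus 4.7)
My plan is to reduce the problem to a simple algebraic condition on the base point, namely that a normal Weyl structure $\si_u$ with $u \in \ba$ over $x$ is $\va$-invariant on a neighborhood of $x$ if and only if $\va(u) \in u \cdot G_0$. The key identity to establish is
\[
\va^*\si_u = \si_{\va^{-1}(u)},
\]
which follows from exactly the chain of manipulations used in the excerpt to prove that pullbacks of normal Weyl structures are normal --- I only need to track the actual base point of the resulting section, most cleanly by observing that the image of $\va^*\si_u$ in $\ba$ equals $\va^{-1}$ applied to the image of $\si_u$, so it is swept out by the flows $\Fl_1^{\omi(X)}$ starting from $\va^{-1}(u)$, matching the defining equation of $\si_{\va^{-1}(u)}$. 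Combined with the observation that the image of a normal Weyl structure $\si_u$ meets the fiber $p^{-1}(x)$ exactly in the $G_0$-orbit $u \cdot G_0$ (because $\Fl_1^{\omi(X)}(u)$ projects away from $x$ for any nonzero $X \in \fg_{-1}$), this gives $\si_u = \si_{u'}$ iff $u' \in u \cdot G_0$, so $\va^*\si_u = \si_u$ iff $\va(u) \in u \cdot G_0$.

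For existence I will import the invariant Weyl structure $\hat \si$ supplied by Proposition \ref{upsilon}, which satisfies $\va^*\hat\si|_{p_0^{-1}(x)} = \hat\si|_{p_0^{-1}(x)}$. Take any $u \in \ba$ lying on $\hat\si$ over $x$, that is, $\hat\si(\pi(u)) = u$. Then $\va$-invariance of $\hat\si$ at $x$ reads $\hat\si(\va_0(\pi(u))) = \va(u)$, and since $\hat\si$ maps $p_0^{-1}(x)$ bijectively (by $G_0$-equivariance) onto the single $G_0$-orbit $u \cdot G_0 \subset p^{-1}(x)$, we obtain $\va(u) = u \cdot g_0$ for some $g_0 \in G_0$. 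The first paragraph then delivers $\va^*\si_u = \si_u$ on a neighborhood of $x$.

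For uniqueness I will invoke the general fact established in the excerpt immediately after Proposition \ref{upsilon}: any two $\va$-invariant Weyl structures at $x$ must agree on the whole fiber $p_0^{-1}(x)$. Applied to a second $\va$-invariant normal Weyl structure $\si_{u'}$ over $x$, this forces the two images over $x$ --- namely $u \cdot G_0$ and $u' \cdot G_0$ --- to coincide, so $u' \in u \cdot G_0$, and therefore $\si_{u'} = \si_u$ as sections on the whole common domain. The step I expect to require the most care is the identification $\va^*\si_u = \si_{\va^{-1}(u)}$: the analogous computation displayed in the excerpt is mildly ambiguous about which base point the resulting normal Weyl structure belongs to, and I want to pin this down cleanly by comparing images rather than by rechecking the defining flow equation pointwise. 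Once this is in hand everything else is bookkeeping on $G_0$-orbits in the fiber.
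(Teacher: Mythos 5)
Your argument is correct and follows essentially the same route as the paper: existence is anchored in the $\va$--invariant Weyl structure at $x$ supplied by Proposition \ref{upsilon}, uniqueness in the fact that all $\va$--invariant Weyl structures coincide along the fiber over $x$, and both rest on the rigidity of normal Weyl structures, which the paper phrases as ``normality prescribes the structure uniquely from its values on $p_0^{-1}(x)$'' and you phrase as $\si_u=\si_{u'}$ if and only if $u'\in u\cdot G_0$. Your explicit identity $\va^*\si_u=\si_{\va^{-1}(u)}$ is a welcome clarification of the base-point ambiguity in the paper's displayed pullback computation, but it does not change the substance of the proof.
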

\begin{proof}
Let $\si$ be an arbitrary $\va$--invariant Weyl structure at $x$, 
i.e. $\va^*\si=\si$ holds in the fiber over $x$. 
We take the normal Weyl structure $\si_u$ such that $\si_u(v) = \si(v)$ 
for $p_0(v)=x$. The condition of normality prescribes it then uniquely on a 
suitable neighborhood of $x \in M$.
Pullback of this Weyl structure is again some normal Weyl structure. 
But we know that $\si$ and $\si_u$ coincide at $x$ and we have 
$\va^*\si_u=\si_u$ at $x$. Then $\va^*\si_u$ has to be the original normal
Weyl structure $\si_u$ and we get $\va^*\si_u=\si_u$ over some neighborhood of $x$.

Finally, the resulting normal Weyl structure does not depend on the choice of the 
$\va$--invariant Weyl structure $\si$ at $x$, because all these structures are equal 
at~$x$.
\end{proof}

\begin{cor}
Suppose there is a symmetry with the center at $x$ on a $|1|$--graded geometry. 
Then there is an admissible affine 
connection given on the neighborhood of $x$ which is invariant with 
respect to the symmetry.
\end{cor}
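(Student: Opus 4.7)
My plan is to invoke the preceding theorem to produce an invariant normal Weyl structure and then transport that invariance to its associated Weyl connection. First, by the theorem just stated, any automorphism $\va$ covering the symmetry $s_x$ admits a normal Weyl structure $\si_u$ with $\va^*\si_u = \si_u$ on some neighborhood $U$ of $x$. This $\si_u$ is the natural candidate whose Weyl connection $\na^{\si_u}$ I will show is $s_x$-invariant on $U$.

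The key observation is that the whole construction $\si \mapsto \na^\si$ is natural with respect to automorphisms of the Cartan geometry. Concretely, the Weyl connection is obtained by pulling back the $\fg_0$-component $\si^*\om_0$ and viewing it as a principal connection on $p_0 : \ba_0 \rightarrow M$; the induced affine connection on any associated bundle is an admissible connection for the underlying $G_0$-structure, as recalled in the Weyl structures subsection. I would then verify the functoriality identity
\begin{equation*}
\underline{\va}^*\na^{\si} = \na^{\va^*\si},
\end{equation*}
which is essentially immediate from $\va^*\om = \om$ together with $\va^*\si = \va^{-1}\circ \si \circ \va_0$: pulling back the splitting of $\si^*\om$ via $\va_0$ just produces the splitting of $(\va^*\si)^*\om$.

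Once this is in place the proof is one line: applying the identity to $\si = \si_u$ and using $\va^*\si_u = \si_u$ on $U$ gives $s_x^*\na^{\si_u} = \na^{\si_u}$ on $U$, so the admissible affine connection $\na^{\si_u}$ is invariant under the symmetry on a neighborhood of $x$, as required.

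The main obstacle is the naturality statement $\underline{\va}^*\na^\si = \na^{\va^*\si}$; although conceptually clear, it must be stated carefully because $\na^\si$ lives on $\ba_0$ while $\va_0$ is the induced underlying automorphism, and one must check that the decomposition of $\si^*\om$ into its $\fg_{-1}$, $\fg_0$, $\fg_1$ pieces commutes with pullback along $\va_0$ (which follows from $\va$ being a morphism of Cartan geometries, hence preserving the grading components of $\om$). Everything else is then formal.
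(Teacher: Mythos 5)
Your proposal is correct and is essentially the paper's own argument: the paper likewise takes the Weyl connection $\na^{\si_u}$ of the normal Weyl structure made invariant on a neighborhood of $x$ by the preceding theorem. You merely spell out the naturality identity $\underline{\va}^*\na^{\si}=\na^{\va^*\si}$, which the paper leaves implicit.
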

\begin{proof}
We take the Weyl connection $\na^{\si_u}$ given by the normal Weyl structure 
invariant on the neighborhood of $x$ with respect to some covering of the 
symmetry.
\end{proof}

\subsection*{Effective geometries} \label{effective}
Let us pass our discussion to the relation of various coverings of the 
symmetry at $x$ and their invariant Weyl structures at $x$. We also discuss here 
effective geometries in more detail.
We start with a
useful Lemma.

\begin{lemma} \label{id}
(1) The kernel $K$ of the parabolic geometry of type $(G,P)$
 is exactly the kernel of the adjoint 
action $\Ad: G \rightarrow Gl(\fg)$. In particular, it is contained in~$G_0$.  \\
(2) Let $\phi$ be an automorphism of a $|1|$--graded geometry 
such that its base morphism $\underline \phi$ preserves some $x \in M$. 
If $\phi(u)=u\cdot h$ for some $h \in K$  and for some $u \in p^{-1}(x)$ 
(thus for all $u \in p^{-1}(x)$), 
then  $\underline \phi={\rm id}_M$ on some neighborhood of $x$.
\end{lemma}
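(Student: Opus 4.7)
The plan is to treat the two parts in sequence, using part (1) to carry out part (2).

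For part (1), I would first observe that the Lie algebra $\mathfrak{k}$ of $K$ is an ideal of $\fg$ contained in $\fp$, and argue it must vanish. Since $\fg$ is semisimple, any ideal splits as a direct sum of simple ideals, and each such summand inherits the grading from $\fg$ (the grading element sits in $\fg_0$ and preserves every ideal). A nonzero simple graded ideal contained in $\fp$ has trivial negative-degree components; but the Killing form pairs $\fg_{-j}$ nondegenerately with $\fg_j$, forcing the positive-degree components to vanish as well, leaving the ideal inside $\fg_0$---contradicting the blanket assumption that no simple ideal of $\fg$ lies in $\fg_0$. Hence $K$ is discrete. A discrete normal subgroup of a Lie group centralizes the identity component, so $K \subset \ker\Ad$. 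Conversely, $\ker\Ad$ is normal in $G$, has trivial Lie algebra, and centralizes the identity component, hence is contained in every maximal torus of $G$ and therefore in $G_0 \subset P$; by maximality of $K$ we conclude $\ker\Ad \subset K$, giving equality and, in particular, $K \subset G_0$.

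For part (2), the strategy is to correct $\phi$ by a right translation to produce an automorphism fixing $u$, and then invoke the rigidity of Cartan connections. Define $\psi:\ba \to \ba$ by $\psi(v) := \phi(v)\cdot h^{-1}$. Since $h \in Z(G)$ by part (1), right translation by $h^{-1}$ commutes with every $P$-action, so $\psi$ is $P$-equivariant. Because $h \in \ker\Ad$, the principal bundle axiom gives $(r^{h^{-1}})^*\om = \Ad_h \circ \om = \om$, so $\psi$ preserves $\om$; hence $\psi$ is an automorphism of the Cartan geometry, it covers $\underline \phi$ on $M$, and by hypothesis $\psi(u) = u$.

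Finally I would invoke the standard rigidity fact: any automorphism of a Cartan geometry preserves the constant vector fields $\omi(X)$ and hence commutes with their flows, so
\[
\psi \circ \Fl^{\omi(X)}_t(u) = \Fl^{\omi(X)}_t(\psi(u)) = \Fl^{\omi(X)}_t(u)
\]
for every $X \in \fg$ and small $t$. Since the constant vector fields span $T\ba$ pointwise, finite compositions of such flows cover a whole neighborhood of $u$, so $\psi = {\rm id}_\ba$ there, and projecting to $M$ yields $\underline \phi = {\rm id}_M$ near $x$. The main conceptual step is the structural argument in (1) excluding nonzero ideals of $\fg$ inside $\fp$; once $K = \ker\Ad$ is identified, part (2) is a short application of Cartan rigidity.
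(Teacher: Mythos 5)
Your proposal is correct in substance and, at its core, follows the same route as the paper, with one genuinely different ingredient and two justifications that should be repaired. In part (1) the paper gets discreteness of $K$ simply by citing the (earlier stated) fact that parabolic geometries are infinitesimally effective, and then runs exactly your two inclusions: differentiating $\exp(tX)\,g\exp(-tX)=g$ gives $K\subset\ker\Ad$, and conversely any $g\in\ker\Ad$ trivially preserves the grading, hence lies in $G_0\subset P$ by the very definition of $G_0$, so $\ker\Ad$ is a normal subgroup of $G$ contained in $P$ and is inside $K$ by maximality. Your graded-ideal/Killing-form argument that the Lie algebra of $K$ vanishes is a nice self-contained substitute for the citation of infinitesimal effectivity. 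However, your justification of the reverse inclusion via ``centralizes the identity component, hence is contained in every maximal torus and therefore in $G_0$'' is both unnecessary and unsound when $G$ is disconnected: an element of $\ker\Ad$ need not lie in the identity component at all, and membership in maximal tori would not by itself place it in $G_0$. Replace this by the paper's one-line observation that $\Ad_g=\operatorname{id}$ preserves the grading, so $g\in G_0$ by definition (of the maximal choice of $G_0$).

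In part (2) your device of passing to $\psi=r^{h^{-1}}\circ\phi$ and letting it commute with flows of constant vector fields is the same mechanism as the paper's direct computation in normal coordinates, $\underline\phi\circ p\circ\Fl_1^{\omi(X)}(u)=p\circ\Fl_1^{\omi(X)}(uh)=p\circ\Fl_1^{\omi(\Ad_{h^{-1}}X)}(u)=p\circ\Fl_1^{\omi(X)}(u)$ for $X\in\fg_{-1}$; using all constant fields and compositions of their flows, as you do, is an equivalent way to fill a neighborhood. Two small caveats: the claim ``$h\in Z(G)$'' is more than part (1) delivers for disconnected $G$ (there $\ker\Ad$ is the centralizer of the identity component, not necessarily the full center), and it is also more than you need. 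The identity $\Ad_h=\operatorname{id}$ already gives $(r^{h^{-1}})^*\om=\Ad_h\circ\om=\om$, which is all that is required for $\psi$ to commute with the flows; and since $p\circ\psi=p\circ\phi=\underline\phi\circ p$, the conclusion $\psi=\operatorname{id}$ near $u$ descends to $\underline\phi=\operatorname{id}$ near $x$ without invoking any $P$--equivariance of $\psi$. With these two repairs your argument is complete.
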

\begin{proof} 
(1) Remind that $K$ is the maximal normal subgroup of $G$ contained in $P$.
Suppose that $g$ belongs to the kernel $K$. We defined parabolic geometries as 
infinitesimally effective geometries and $K$ is discrete in this case.
Then thanks to the smoothness of the multiplication we have 
$ 
\exp(tX)g\exp(-tX)=g
$ 
for all $X \in \fg$ and the differentiating at $t=0$ gives 
$T\rho^{g}.X-T\lambda_{g}.X=0$.  Thus $\Ad_g(X)=X$ holds for all $X \in \fg$
and the element $g$ lies in the kernel of the adjoint action.

Suppose that $g$ belongs to the kernel of the adjoint action 
$\Ad: G \rightarrow Gl(\fg)$.
Then $\Ad_g:\fg\rightarrow \fg$ is identity or equivalently 
$\Ad_g(X)=X$ for all $X \in \fg$. 
If $h \in G$ is arbitrary then 
$\Ad_{hgh^{-1}}(X)=\Ad_h\Ad_g\Ad_{h^{-1}}(X)=\Ad_h(\Ad_{h^{-1}}(X))=X$ 
for all $X \in \fg$. The action of $g$ clearly respects the grading and 
$g$ belongs to $G_0 \subset P$. Thus elements from the kernel of the adjoint action 
form a normal subgroup of $G$ which is contained in $P$. 
Such subgroup has to be contained in $K$. \\
(2) We have $\underline \phi(x)=x$ and we use normal coordinates at 
$u \in p^{-1}(x)$ to describe the neighborhood of $x$. Any point from 
the suitable neighborhood of $x$ can be written as 
$p \circ {\rm Fl}_1^{\omi(X)}(u)$ for suitable $X \in \fg_{-1}$ and we have
\begin{align*}
\underline \phi \circ p \circ {\rm Fl}_1^{\omi(X)}(u)
&=p \circ \phi \circ{\rm Fl}_1^{\omi(X)}(u)= 
p \circ {\rm Fl}_1^{\omi(X)}(\phi(u))=\\&
= p \circ {\rm Fl}_1^{\omi(X)}(uh).
\end{align*}
The equivariancy of $\om$ and the fact that $\underline \Ad_h=\Ad_h$ for 
$h \in G_0$ give that the curve $p \circ {\rm Fl}_t^{\omi(X)}(uh)$ 
coincides with the curve $p \circ {\rm Fl}_t^{\omi(\Ad_{h^{-1}}X)}(u)$. 
(See \cite{CSZ,V} for details on generalized geodesics.) 
The action of $h$ from the kernel is trivial and then the action of $h^{-1}$ is 
trivial, too. We have 
$$
p \circ {\rm Fl}_1^{\omi(X)}(uh)
=p \circ {\rm Fl}_1^{\omi(\Ad_{h^{-1}}X)}(u)
=p \circ {\rm Fl}_1^{\omi(X)}(u)
.$$
This holds for all $X \in \fg_{-1}$ and $\underline \phi$ is the identity on a 
neighborhood of $x$.
\end{proof}

\begin{prop} \label{morf}
Let $\va$ and $\psi$ be two coverings of two symmetries with the center at $x$ 
on a $|1|$--graded geometry. \\
(1) Suppose that invariant Weyl structures with respect to this two coverings 
coincide at $x$, i.e. 
$\va^*\si|_{p_0^{-1}(x)} = \si|_{p_0^{-1}(x)}=\psi^*\si|_{p_0^{-1}(x)}$.
Then 
$ 
\psi(u) =\va(u)\cdot h 
$
holds for all $u$ over some neighborhood of $x$ and for some $h$ from the kernel 
of the geometry. In particular, $\va$ and $\psi$ have to cover the same symmetry 
with center at $x$.
\\ 
(2) Suppose that $\va$ and $\psi$ cover the same symmetry at $x$. Then  
Weyl structures invariant with respect to this two coverings coincide 
along the fiber over $x$.
\end{prop}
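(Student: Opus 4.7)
The plan is to analyze both parts through the elements $g_\va, g_\psi \in P$ describing the actions of $\va$ and $\psi$ on a chosen frame over $x$.

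For part (1), fix $u_0 \in p^{-1}(x)$ lying in the image of the $G_0$--equivariant section $\si$, and write $\va(u_0) = u_0 \cdot g_\va$, $\psi(u_0) = u_0 \cdot g_\psi$ with $g_\va, g_\psi \in P$. By Lemma \ref{prvekg}, each of these factors as $g_0\exp Z$ with $g_0 \in G_0$ acting as $-\operatorname{id}$ on $\fg_{-1}$. The hypothesis $\va^*\si|_{p_0^{-1}(x)} = \si|_{p_0^{-1}(x)}$ unpacks to $\va(\si(v)) = \si(\va_0(v))$ for every $v \in p_0^{-1}(x)$; applied at $v_0 := \pi(u_0)$ this gives $\va(u_0) \in \si(p_0^{-1}(x))$. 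Since $\si(p_0^{-1}(x))$ is a single $G_0$--orbit through $u_0$ inside the $P$--fiber $p^{-1}(x)$, this forces $g_\va \in G_0$, and an identical argument yields $g_\psi \in G_0$.

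Next set $h := g_\va^{-1}g_\psi \in G_0$. Since both $g_\va$ and $g_\psi$ act as $-\operatorname{id}$ on $\fg_{-1}$, $\Ad_h$ is the identity on $\fg_{-1}$. Invariance of the Killing form under $\Ad_h$ then gives $\Ad_h = \operatorname{id}$ on $\fg_1$, and compatibility with the bracket together with the standing assumption on the grading extends this to $\fg_0$, so $\Ad_h = \operatorname{id}$ on all of $\fg$. Hence $h \in \ker\Ad = K$ by Lemma \ref{id}(1), and $\psi(u_0) = \va(u_0)\cdot h$. To promote this from a single point to a neighborhood of $x$, I would first verify $\underline\psi = \underline\va$ near $x$: writing nearby points in normal coordinates at $u_0$ as $p\circ\Fl_1^{\omi(X)}(u_0)$ and repeating the computation of Lemma \ref{id}(2), the triviality of $\Ad_h$ cancels the effect of replacing $g_\va$ by $g_\psi = g_\va h$. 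Once $\underline\va = \underline\psi$ holds on a neighborhood of $x$, Theorem \ref{rigidity} and the infinitesimal effectiveness of parabolic geometries upgrade this to $\psi = \va\cdot h$ on that neighborhood with a single constant kernel element $h$.

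Part (2) amounts to reversing the last step. Since $\underline\va = \underline\psi$, Theorem \ref{rigidity} together with infinitesimal effectiveness yields $\psi = \va\cdot h$ for an element $h \in K \subset G_0$ that is constant on a connected neighborhood. The $G_0$--equivariance of $\si$ then gives $\psi_0 = r^h\circ\va_0$ on $\ba_0$ and $\si(\va_0(v)\cdot h) = \si(\va_0(v))\cdot h$, while $\psi = \va\cdot h$ immediately yields $\psi^{-1}(w\cdot h) = \va^{-1}(w)$. Combining these,
\[
\psi^*\si(v) \;=\; \psi^{-1}\bigl(\si(\va_0(v))\cdot h\bigr) \;=\; \va^{-1}\bigl(\si(\va_0(v))\bigr) \;=\; \va^*\si(v)
\]
for every $v \in \ba_0$, so the two pullbacks agree everywhere and in particular the $\va$--invariant Weyl structure $\si$ at $x$ is automatically $\psi$--invariant at $x$. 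The main obstacle is the algebraic step ``$\Ad_h|_{\fg_{-1}} = \operatorname{id}$ implies $h \in K$'', which relies on Killing--form duality between $\fg_{-1}$ and $\fg_1$ together with the standing hypothesis that no simple ideal of $\fg$ is contained in $\fg_0$ to rule out nonzero centralizers of $\fg_{-1}$ inside $\fg_0$; once this is in hand, the rest is a direct application of the rigidity theorem and the calculus of Weyl--structure pullbacks already developed in the paper.
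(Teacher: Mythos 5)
Your proof is correct and follows essentially the same route as the paper: you extract a relating element acting trivially on $\fg_{-1}$, extend triviality to all of $\fg$ via the grading and Killing-form duality to land in the kernel, reproduce the normal-coordinate computation of Lemma \ref{id}(2) to identify the base maps near $x$, and then apply Theorem \ref{rigidity} with infinitesimal effectiveness; part (2) is the same equivariance computation as in the paper. The only difference is cosmetic — you work with the elements $g_\va,g_\psi$ at a fixed frame in the image of $\si$, while the paper phrases the same argument through the composite $\psi\circ\va^{-1}$ and the underlying maps $\va_0,\psi_0$.
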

\begin{proof}
(1) Suppose that $\va^*\si=\si$ and $\psi^*\si=\si$ at $x$. Then we have 
$\va^{-1} \circ \si \circ \va_0=\psi^{-1} \circ \si \circ \psi_0$ at $x$ and 
this is equivalent with the fact that
$$
(\psi \circ \va^{-1}) \circ \si (v) = \si \circ (\psi_0 \circ \va_0^{-1})(v)
$$ 
holds for each $v \in p_0^{-1}(x)$. The morphism $\va_0$ preserves the 
fiber over $x$ and in fixed $v$ is equal to the right multiplication 
by some $k \in G_0$ such that $\Ad_{k}(X)=-X$ for all $X \in \fg_{-1}$. 
The morphism $\psi_0$ also coincides at $v$ with the action of some 
$g \in G_0$ satisfying $\Ad_{g}(X)=-X$ for all $X \in \fg_{-1}$. 
We can accordingly write 
$$
(\psi \circ \va^{-1}) \circ \si (v) = \si(v) \cdot h
,$$ 
where $h=k^{-1}g$ acts trivially on $\fg_{-1}$ by the $\Ad$--action.
Such element $h \in G_0$ has to act trivially on the whole $\fg$ because the action 
respects the grading and the action on $\fg_0 \subseteq  \fg_{-1}^* \otimes \fg_{-1}$ 
and $\fg_{1} \simeq \fg_{-1}^*$ is trivial. Thus $h$ belongs to the kernel and 
then
$
\underline{\psi \circ \va}^{-1} 
$ 
equals to the identity on some neighborhood of $x \in M$, see Lemma \ref{id}.
Then Theorem \ref{rigidity} gives that  
$$
\psi \circ \va^{-1}(u)={\rm id}_\ba \cdot f(u)$$ 
holds for some function $f:\ba \rightarrow K$ over the neighborhood of $x$. 
According to our definition 
of parabolic geometries, they are always infinitesimally effective and 
the function $f$ has to be constant on the neighborhood of $x$. 
The value of $f$  has to be the latter 
element $h \in G_0$.
We have $\psi \circ \va^{-1}(u)=uh$ and if we apply 
the automorphism $\va$ first, we get 
$\psi \circ \va^{-1}(\va(u))=\va(u)h$ over a suitable neighborhood of $x$.  
This implies $\psi(u)=\va(u) \cdot h$ over the neighborhood of $x$.  
\\(2) We have $\psi(u)=\va(u)\cdot h$  over some suitable neighborhood of $x$, 
where $h$ belongs to the kernel, see Theorem \ref{rigidity}. 
Then clearly their underlying automorphisms 
on $\ba_0$  satisfy $\psi_0(v)=\phi_0(v) \cdot h$ for $v \in \ba_0$. 
Suppose that $\va^*\si(v)= \va^{-1} \circ \si \circ \va_0(v) =\si(v)$ 
for all $v \in p_0^{-1}(x)$ and the Weyl structure $\si$. We have 
\begin{align*}
\psi^*\si(v)&= 
(\psi^{-1} \circ \si \circ \psi_0)(v)= 
\va^{-1} (\si \circ \psi_0(v))\cdot h^{-1}=\\
&=(\va^{-1} \circ \si \circ \va_0)(v) \cdot h \cdot h^{-1}=\va^*\si(v)=\si(v)
\end{align*}
for all $v \in p_0^{-1}(x)$ and the invariant Weyl structures coincide at $x$.
\end{proof}

In particular, if $\va$ covers some symmetry at $x$, then clearly $\va^{-1}$ covers 
some symmetry at $x$, too. Moreover, 
$\va$--invariant and $\va^{-1}$--invariant Weyl structures coincide at $x$.
Really, if we have $\va^*\si = \si$ at $x$ then we also have 
$
(\va^{-1})^* \va^*\si = (\va^{-1})^*\si
$
at $x$ and simultaneously we get 
$$
(\va^{-1})^* \va^*\si=(\va \circ \va^{-1})^*\si={\rm id}^*\si=\si
.$$ 
Thus we get $(\va^{-1})^*\si=\si$ at $x$. Then $\va$ and $\va^{-1}$ have to 
cover the same symmetry.
\begin{prop} \label{dusl}
Each symmetry on a $|1|$--graded geometry is involutive and the center of the 
symmetry is its isolated fixed point.
\end{prop}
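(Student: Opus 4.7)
My plan is to prove both statements simultaneously by passing to normal coordinates on a suitably chosen frame, so that the covering automorphism acts there simply as the sign change on $\fg_{-1}$.

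\medskip

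Let $\va$ cover the symmetry $s_x$ and fix $u\in p^{-1}(x)$. By Lemma \ref{prvekg} we have $\va(u)=u\cdot g_0\exp Z$ for some $g_0\in G_0$ satisfying $\Ad_{g_0}|_{\fg_{-1}}=-\operatorname{id}$ and some $Z\in\fg_1$. First I would normalize $u$: replace $u$ by $u':=u\cdot\exp(-Z/2)$. A direct computation using that $\fg_1$ is abelian in the $|1|$-graded case, that $\Ad_{g_0^{-1}}$ preserves the grading, and that it must act as $-\operatorname{id}$ on $\fg_1$ as well (this follows as in Lemma \ref{prvekg}, e.g.\ from Killing-form duality between $\fg_{-1}$ and $\fg_1$), shows that $\va(u')=u'\cdot g_0$. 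So after this change of frame I may assume $\va(u)=u\cdot g_0$ with $g_0\in G_0$ acting by $-\operatorname{id}$ on $\fg_{-1}$.

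\medskip

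Next, I would introduce the normal coordinate chart
$$
\Psi:\fg_{-1}\longrightarrow M,\qquad \Psi(X):=p\circ\operatorname{Fl}_1^{\omi(X)}(u),
$$
which is a local diffeomorphism from a neighborhood of $0$ onto a neighborhood of $x$, with $\Psi(0)=x$ (the tangent map at $0$ is the canonical identification $\fg_{-1}\simeq T_xM$ coming from the Cartan connection). Using that morphisms intertwine constant vector fields and flows, together with the standard equivariance identity $\operatorname{Fl}_t^{\omi(X)}(u\cdot h)=\operatorname{Fl}_t^{\omi(\Ad_h X)}(u)\cdot h$, I compute
\begin{align*}
s_x(\Psi(X))&=p\circ\va\circ\operatorname{Fl}_1^{\omi(X)}(u)
=p\circ\operatorname{Fl}_1^{\omi(X)}(u\cdot g_0)\\
&=p\circ\operatorname{Fl}_1^{\omi(\Ad_{g_0}X)}(u)=\Psi(-X).
\end{align*}

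\medskip

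Both conclusions now drop out. For involutivity, $s_x^2(\Psi(X))=\Psi(-(-X))=\Psi(X)$ on a neighborhood of $0$, and since $\Psi$ is a diffeomorphism onto a neighborhood of $x$, $s_x^2=\operatorname{id}$ there. For the isolated fixed point, $s_x(\Psi(X))=\Psi(X)$ forces $\Psi(X)=\Psi(-X)$, hence $X=-X$ by injectivity of $\Psi$ near $0$, i.e.\ $X=0$ and $\Psi(X)=x$. The only real subtlety is the initial normalization step eliminating the $\exp Z$ factor; once that is available, the rest is a mechanical use of equivariance of the flow of $\omi(X)$, and the normal chart $\Psi$ does all the work. (Alternatively, involutivity can be deduced in one line from the remark preceding the proposition, since $\va$ and $\va^{-1}$ cover the same symmetry, so $s_x=s_x^{-1}$.)
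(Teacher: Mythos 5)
Your proof is correct, and it takes a genuinely different route from the paper's. The paper disposes of the statement in two lines by quoting machinery already built: involutivity follows because $\va$ and $\va^{-1}$ admit invariant Weyl structures that coincide at $x$, so by Proposition \ref{morf}(1) they cover the same symmetry, i.e.\ $s_x=s_x^{-1}$ (this is exactly your parenthetical one-line alternative), and the isolated fixed point is deduced abstractly from $T_xs_x=-\operatorname{id}$. You instead linearize the symmetry explicitly: the frame normalization $u'=u\cdot\exp(-Z/2)$ (legitimate, since $\fg_1$ is abelian and $\Ad_{g_0}$ is forced to be $-\operatorname{id}$ on $\fg_1$ by the Killing-form pairing) reduces to $\va(u')=u'\cdot g_0$, and then the equivariance of constant vector fields gives $s_x(\Psi(X))=\Psi(-X)$ in the normal chart, from which both claims drop out at once. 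Your approach needs only Lemma \ref{prvekg} and elementary Cartan-geometry identities, bypassing Weyl structures, Theorem \ref{rigidity} and the kernel analysis behind Proposition \ref{morf}, and it proves something slightly stronger: the existence of a normal coordinate system in which the symmetry is exactly $X\mapsto -X$, a fact the paper only records later in its remark comparing with affine locally symmetric spaces (your $\exp(-Z/2)$ normalization is the frame-level counterpart of the $\hat\si+\tfrac12\U$ trick of Proposition \ref{upsilon}). What the paper's route buys is brevity within its framework and the link to uniqueness of invariant Weyl structures; what yours buys is a self-contained, more elementary argument with an explicit local model of the symmetry. One cosmetic point: your flow identity $\Fl_t^{\omi(X)}(u\cdot h)=\Fl_t^{\omi(\Ad_h X)}(u)\cdot h$ is the correct one under the convention $(r^h)^*\om=\Ad_{h^{-1}}\circ\om$ (and in your situation the distinction between $\Ad_{g_0}$ and $\Ad_{g_0^{-1}}$ is immaterial anyway, both being $-\operatorname{id}$ on $\fg_{-1}$).
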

\begin{proof}
The first part follows directly from the latter facts. The second part is 
obvious thanks to the fact that for each symmetry, its differential at the center  
acts as $-$id on the whole tangent space.
\end{proof}
If we start with an effective geometry, we in addition have following 
consequences of the latter Propositions: 
\begin{prop} 
Let $\va$ and $\psi$ cover two symmetries at $x$ on some effective 
$|1|$--graded geometry and suppose that their invariant Weyl structures 
coincide at $x$. Then $\va=\psi$ over a neighborhood of $x$. 
Moreover, each covering of a symmetry is involutive.
\end{prop}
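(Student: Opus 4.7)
The plan is to read both claims as immediate consequences of Proposition \ref{morf} once the assumption of effectivity is used to eliminate the kernel.

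For the first assertion, I would start from the hypothesis $\va^*\si|_{p_0^{-1}(x)} = \si|_{p_0^{-1}(x)} = \psi^*\si|_{p_0^{-1}(x)}$ and apply Proposition \ref{morf}(1). This yields an element $h$ in the kernel $K$ of the geometry and a neighborhood of $x$ on which $\psi(u) = \va(u)\cdot h$ holds. Since the geometry is effective, $K$ is trivial by definition, so $h = e$, and we obtain $\psi(u) = \va(u)$ over a neighborhood of the fiber $p^{-1}(x)$. Since automorphisms cover their base morphisms, this gives $\psi = \va$ on a neighborhood of $x$ in $\ba$.

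For the second assertion (involutivity of any covering $\va$ of a symmetry at $x$), the key observation is the one already recorded in the excerpt just before Proposition \ref{dusl}: if $\si$ is $\va$--invariant at $x$, then applying $(\va^{-1})^*$ to the identity $\va^*\si = \si$ at $x$ and using $(\va^{-1})^*\va^*\si = (\va\circ\va^{-1})^*\si = \si$ shows that the same $\si$ is also $\va^{-1}$--invariant at $x$. In particular $\va^{-1}$ covers a symmetry at $x$ and the invariant Weyl structures of $\va$ and $\va^{-1}$ coincide at $x$. Now I can feed $\va$ and $\psi := \va^{-1}$ into the first part of the present proposition, and conclude $\va = \va^{-1}$ on a neighborhood of $x$, i.e.\ $\va \circ \va = \mathrm{id}_\ba$ on that neighborhood.

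There is no serious obstacle here: the whole content is packaged in Proposition \ref{morf}(1) and in the remark about $\va^{-1}$--invariance that precedes Proposition \ref{dusl}. The one point that deserves a line of care is the reduction $h\in K \Rightarrow h=e$, which uses effectivity in the strict sense (kernel $=\{e\}$), and not merely the infinitesimal effectivity that is built into the definition of parabolic geometries. This is why the statement is restricted to effective geometries.
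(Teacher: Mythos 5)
Your proposal is correct and follows essentially the same route as the paper: the paper's own (very terse) proof also reduces both claims to the kernel element $h$ appearing in (the proof of) Proposition \ref{morf}, notes that effectivity forces $h=e$ so that $\psi\circ\va^{-1}$ fixes the invariant Weyl structure and hence equals the identity near $x$, and obtains involutivity by taking $\psi=\va^{-1}$ via the remark preceding Proposition \ref{dusl}. Your only cosmetic difference is that you invoke the statement of Proposition \ref{morf}(1) rather than its internal formula, which is fine.
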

\begin{proof}
From the proof of Proposition \ref{morf} we get that
$(\psi \circ \va^{-1}) \circ \si (v) = \si(v)$ 
holds in the case of effective geometries. The rest follows 
immediately from the formula.
\end{proof}

We can shortly summarize all these facts in the following way: 
\begin{itemize}
\item The symmetry can have several coverings, 
but all of them differ by the right multiplication by some element from the 
kernel and in the fiber over $x$, all of them share the same invariant Weyl 
structure at $x$. 
\item Each symmetry with the center at $x$ 
allows (in the fiber over $x$) exactly one invariant Weyl structure at $x$. 
This does not depend on the choice of the covering of the symmetry.
\item For effective geometries, each symmetry has exactly one covering.
\end{itemize}

Let us illustrate the situation with symmetries on (non)effective geometries on the 
homogeneous model of conformal geometry:
\begin{exam} {\it Conformal Riemannian structures.} \label{kernel-conf}
First, we have to show that homogeneous models of conformal structures are symmetric.
In Example \ref{exam-conf} we started with $G=O(p+1, q+1)$ and we would continue 
with this choice.

We are looking for elements giving symmetries in the origin of this model, 
see Corollary \ref{symm-homog}.
For
$b=\left( \begin{smallmatrix}
\lambda&0&0\\0&C&0\\0&0&\lambda^{-1} 
\end{smallmatrix} \right) \in G_0$ 
and
$V=\left( \begin{smallmatrix}
0&0&0\\X&0&0\\0&-X^TJ&0 
\end{smallmatrix} \right)
 \in \fg_{-1}$
the adjoint action Ad$_bV$ is given by $X\mapsto \lambda^{-1}CX$ 
and we require $\lambda^{-1}CX=-X$. 
Thus we look for $\lambda \in \mathbb{R} \setminus \{0\}$ and $C \in O(p,q)$ such that 
$CX=-\lambda X$ for each $X \in \mathbb{R}^{p+q}$. Clearly, $C$ has to be 
diagonal and all elements on the diagonal have to be equal to $1$ or $-1$ 
because $\det C$ is equal to $1$ or $-1$.
We get two elements $g_1=\left( \begin{smallmatrix}
-1&0&0\\0&E&0\\0&0&-1 \end{smallmatrix} \right)$ and $g_2=\left( \begin{smallmatrix}
1&0&0\\0&-E&0\\0&0&1 \end{smallmatrix} \right)$ satisfying all conditions.
Clearly, both elements belong to the group $O(p+1,q+1)$.
Then all elements inducing some symmetry in the origin are of the form 
$\Big( \begin{smallmatrix}
-1&-Z&{1 \over 2}ZJZ^T\\0&E&-JZ^T\\0&0&-1 \end{smallmatrix} \Big)$ and 
$ \left( \begin{smallmatrix}
1&Z&-{1 \over 2}ZJZ^T\\0&-E&JZ^T\\0&0&1 \end{smallmatrix} \right)$ 
for all $Z \in \mathbb{R}^{p+q*}$.

The choice $G=O(p+1,q+1)$ gives not an effective geometry. 
One can easily compute that 
the kernel consists of elements 
$\left( \begin{smallmatrix} 1&0&0\\0&E&0\\0&0&1 \end{smallmatrix} \right)$ and  
$\left( \begin{smallmatrix} -1&0&0\\0&-E&0\\0&0&-1 \end{smallmatrix} \right)$. 
Thus the elements $g_1$ and $g_2$ differ by 
multiplication by some element from the kernel. Left multiplication by elements 
from the kernel induces identity on the base manifold and then $g_1$ and $g_2$
give the same symmetry on the base manifold. Then each possible symmetry (in the origin) 
has exactly two possible coverings.

It is possible to take an effective model, e.g. to start with $PO(p+1,q+1)$, 
the factor of the orthogonal group by its center. With this choice we clearly find 
exactly one element from $G_0$ satisfying all conditions -- the class 
represented e.g. by the element 
$\left( \begin{smallmatrix} -1&0&0\\0&E&0\\0&0&-1 \end{smallmatrix} \right)$.
\end{exam}

\subsection*{Remark on relations to affine locally symmetric spaces}
(1) 
Remark first, that there is a notion on geodesic symmetry for affine 
locally symmetric spaces. Each symmetry (locally) reverses geodesics going 
through its center. This property describes the symmetry on some neighborhood 
of the center and can be used as a definition, see \cite{KN2,H}.
Analogies of the affine geodesics for parabolic geometries are generalized geodesics. 
They are defined as the projections of the flow lines of a horizontal vector 
field. 
Detailed discussion on generalized geodesics can be found in \cite{CSZ,V}. 

Our definition of the symmetry on $|1|$--graded geometries implies that symmetries are 
 those automorphisms, which revert the `classes' of generalized geodesics. 
More precisely, the symmetry at $x$ maps each generalized geodesic going through $x$ 
in some direction to some generalized geodesic going through $x$ in the opposite direction. 
This correspond to the fact that symmetries are not uniquely determined in this case.
We just saw this on homogeneous models, see Examples \ref{proj} and \ref{kernel-conf}.
There can exist a lot of different symmetries at one point on a $|1|$--graded geometry 
and it is not reasonable to define symmetries only via reverting of geodesics.

(2) On the affine (locally symmetric) space, there is exactly one normal coordinate system
at the point (up to $Gl(n,\R)$ transformation) and one can use it to describe nicely the symmetry at the point. In these 
coordinates the symmetry only reverts the straight lines going through the point.

On parabolic geometries, there can exist many different normal coordinate 
systems. They are given by the choice of the (second order) frame in the 
fiber. We showed that on $|1|$--graded geometry carrying some symmetry at $x$,
there is (up to some transformation given by an element from $G_0$) exactly one 
normal coordinate system $\si_u$ at $x$ such that the 
(covering of the) symmetry only reverts the straight lines going through the point 
in these coordinates.
Moreover, this fixes the connection $\na^\si$ which is compatible with the symmetry and therefore, its geodesics (which
are generalized geodesics because the affine connection 
is normal) are reversed by the symmetry.

(3) Remark, that there is an equivalent definition of the symmetry on the manifold. 
One can define symmetry at $x$ as a (locally defined) 
involutive automorphism such that the point $x$ is the isolated fixed point 
of this automorphism, see \cite{P} or \cite{KN2}.  Symmetries defined in this manner clearly satisfy the 
condition on the differential and they are symmetries in our sense. 
Conversely, the Proposition \ref{dusl} says that our symmetries correspond to 
the latter definition.

(4) Finally remind the well known description of affine locally symmetric spaces. 
The pair $(M,\na)$ is affine locally symmetric if and only if the torsion of $\na$ 
vanishes and its curvature is covariantly constant. For symmetric $|1|$--graded 
geometries, we have just proved that the torsion of the Cartan connection 
(and thus of all Weyl connections) vanishes. In the next section, we 
will discuss the curvature.

\section{Further curvature restriction}
In the Theorem \ref{interesting} we have indicated which geometries require some 
further study of the curvature and in this section, we deal only with them. 
We have to discuss so called Weyl curvature, which can be defined for all 
parabolic geometries via Weyl structures (and which is related to the 
homogeneous component of the curvature of homogeneous degree 2, see \cite{parabook}). 
It is an analogy of the classical object from conformal geometry, see \cite{WS} 
or \cite{TBI,TC} for more detailed discussion on objects which generalize the 
classical conformal ones. 

Finally we show some consequences for the concrete 
geometries. Let us first point out that the motivation for this results is the 
article \cite{P}, where the author studies the projective case in the classical 
setup of affine connections. Our methods work for all $|1|$--graded 
parabolic geometries.

\subsection*{Curvature restrictions}
From our point of view, the most interesting $|1|$--graded
geometries are the ones, which allow some homogeneous component 
of curvature of degree $2$. 
Suppose there is some symmetry at each point of such geometry. 
Then its curvature is of the form 
$\ka=\ka^0:\ba \rightarrow \wedge^2 \fg_{-1}^{*} \otimes \fg^{0}$ 
because symmetric $|1|$--graded geometries are torsion free.
If we choose some Weyl structure $\si$, we get the decomposition of 
$\si^*\ka=\ka \circ \si$ such that
$
\si^*\ka =\si^*\ka_0 + \si^* \ka_1.
$ 
The lowest homogeneity part of the decomposition  
$$
\si^*\ka_0: \ba_0 \rightarrow \wedge^2\fg_{-1}^{*} \otimes \fg_0
$$ 
does not change, if we change the Weyl structure. 
This part is called \emph{Weyl curvature} and is 
usually denoted by $W$.

If $\va$ is an automorphism of the parabolic geometry, 
then its curvature is invariant with respect to $\va$. If we choose a Weyl 
structure $\si$ such that $\va^*\si =\si$, then the Weyl curvature has to be 
invariant with respect to the underlying morphism. 
We study algebraic actions 
and covariant derivatives of the Weyl curvature $W$ with respect to Weyl 
connections. The existence of the invariant Weyl structure at the point is crucial 
for our considerations.

\begin{lemma} \label{nepevna}
Let $\si$ be an arbitrary Weyl structure on a $|1|$--graded geometry and 
let $\underline \va$ be a symmetry at $x$ covered by some $\va$. Then
$$
\{\xi ,\Upsilon \} \bullet  W + 2\nabla^{\sigma}_\xi W =0
$$
holds at $x$ for all $\xi \in \frak X(M)$, where $\U$ is defined by 
$\va^*\si=\si+\U$.
\end{lemma}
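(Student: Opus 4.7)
The plan is to combine three ingredients: invariance of the Weyl curvature under automorphisms of the geometry, naturality of the Weyl connection under such automorphisms, and the change--of--Weyl--structure formula $\na^{\si+\U}_\eta s=\na^\si_\eta s+\{\eta,\U\}\bullet s$, and then to evaluate the resulting identity at the fixed point $x$ where $T_x\underline\va=-\operatorname{id}$.

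First, I would observe that $W$ is independent of the choice of Weyl structure and so is a natural object attached to the Cartan geometry. Since $\va$ preserves $\om$ and hence the curvature function $\ka$, the projection to the $\fg_0$--valued homogeneous degree two component is preserved as well, giving $\underline\va^*W=W$ as a section of $\wedge^2T^*M\otimes\operatorname{End}_0(TM)$.

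Second, by naturality of the Weyl connection (it is built functorially from $\om$ and $\si$, both of which are pulled back compatibly by $\va$), we have $\underline\va^*(\na^\si_\xi W)=\na^{\underline\va^*\si}_{\underline\va^*\xi}(\underline\va^*W)$. Using $\underline\va^*\si=\si+\U$ and $\underline\va^*W=W$, and then applying the change--of--Weyl--connection formula with $\eta=\underline\va^*\xi$ and $s=W$, we obtain
$$\underline\va^*(\na^\si_\xi W)=\na^\si_{\underline\va^*\xi}W+\{\underline\va^*\xi,\U\}\bullet W.$$

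Third, I would evaluate both sides at $x$. The condition $T_x\underline\va=-\operatorname{id}$ forces $\underline\va^*\xi|_x=-\xi|_x$, and by bilinearity of $\{\,,\,\}\bullet$ the right hand side at $x$ equals $-\na^\si_\xi W|_x-\{\xi,\U\}\bullet W|_x$. On the left hand side, $\na^\si_\xi W$ has the same tensor type as $W$, namely $(1,3)$ (one slot in $TM$ and three covariant slots coming from $\wedge^2T^*M\otimes T^*M$), so under a pullback whose differential at $x$ is $-\operatorname{id}$ each of the four slots contributes a factor $-1$, giving the overall sign $(-1)^{1+3}=+1$. Hence $\underline\va^*(\na^\si_\xi W)|_x=\na^\si_\xi W|_x$. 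Equating the two expressions and rearranging yields $2\na^\si_\xi W+\{\xi,\U\}\bullet W=0$ at $x$.

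The main thing to justify carefully will be the naturality of $\na^\si$ under pullback of its defining Weyl structure (so that the formula for $\underline\va^*(\na^\si_\xi W)$ above is legitimate); once that is in hand, the argument reduces to a parity count at $x$ combined with the affine transformation rule that relates $\na^{\si+\U}$ to $\na^\si$.
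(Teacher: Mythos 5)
Your proposal is correct and follows essentially the same route as the paper's proof: invariance of $W$ under the automorphism, naturality of the Weyl connection ($\underline\va^*\na^\si=\na^{\va^*\si}=\na^{\si+\U}$), the change-of-Weyl-connection formula, and a sign count at $x$ using $T_x\underline\va=-\operatorname{id}$. The only difference is bookkeeping: you place the parity factor $(+1)$ on the pulled-back tensor $\na^\si_\xi W$ and the $(-1)$ on the direction slot, while the paper collects an overall $(-1)$ from the three vector arguments of the pullback connection; the two computations coincide.
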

\begin{proof}
We take $\nabla ^\sigma_\xi  W(\eta ,\mu )$ for each 
$\xi,\eta,\mu \in \frak X(M)$ and we compute the pullback of the connection 
with respect to the symmetry $\underline \va$. 
At the point $x$ we have 
$$
(\underline{\varphi} ^*\nabla ^\sigma) _\xi  W(\eta ,\mu )=
(T\underline{\varphi}\otimes T\underline{\varphi}^{-1}). 
\nabla ^\sigma _{T\underline{\varphi}.\xi}
W(T\underline{\varphi}.\eta , T\underline{\varphi}.\mu)
=- \nabla ^\sigma_\xi  W(\eta ,\mu )
$$ 
for each $\xi,\eta,\mu \in \frak X(M)$. 
Next, we have $\va^*\si= \si+\U$ for some $\U$
and this gives 
$$
(\underline{\varphi} ^*\nabla ^{\sigma})_\xi 
 W(\eta ,\mu )= \nabla^{\sigma +\Upsilon}  _\xi  W(\eta ,\mu )
$$ 
for each $\xi,\eta, \mu \in \frak X(M)$. 
If we put all together, we get that the identity 
$$
-\nabla ^{\sigma}_\xi  W(\eta ,\mu )=\nabla^{\sigma +\Upsilon}_\xi  W(\eta ,\mu)
$$
holds at the point $x$.
Using the formula for change of Weyl connection we can rewrite this as  
$$
-\nabla^{\sigma}_\xi  W(\eta ,\mu ) = \nabla^{ \sigma}  _\xi  
W(\eta ,\mu )+ (\{ \xi ,\Upsilon \} \bullet W)(\eta ,\mu )
.$$ 
This holds for each $\xi,\eta,\mu \in \frak X(M)$ at the point $x$  and 
gives exactly the formula.
\end{proof}

As an easy consequence of the Lemma we get the following analogy of the result 
from affine locally symmetric spaces: 
\begin{thm} \label{pevna}
Suppose there is a symmetry with the center at $x$ on a $|1|$--graded geometry. 
Then there exists a Weyl connection $\na^\si$ such that $\na^\si W=0$ at the 
point~$x$. The connection corresponds to the invariant Weyl structure at $x$.
\end{thm}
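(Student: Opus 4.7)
The plan is to combine Lemma~\ref{nepevna} with the existence of an invariant Weyl structure (Proposition~\ref{upsilon}). The lemma gives, for \emph{any} Weyl structure $\si$ and any covering $\va$ of a symmetry at $x$, the pointwise identity
$$
\{\xi,\U\}\bullet W + 2\na^\si_\xi W = 0 \quad \text{at } x,
$$
where $\U$ is determined by $\va^*\si = \si + \U$. So the game is to kill the algebraic term by a clever choice of $\si$, and the obvious candidate is the $\va$--invariant Weyl structure at $x$ provided by Proposition~\ref{upsilon}.

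First I would invoke Proposition~\ref{upsilon} to pick a Weyl structure $\si$ with $\va^*\si|_{p_0^{-1}(x)} = \si|_{p_0^{-1}(x)}$. By the defining equation $\va^*\si = \si + \U$, this equality along the fiber over $x$ forces the $G_0$--equivariant map $\U:\ba_0 \to \fg_1$ to vanish on $p_0^{-1}(x)$, which is exactly the statement that the corresponding $1$--form $\U \in \Omega^1(M)$ satisfies $\U(x)=0$.

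Next I would plug this $\si$ into the conclusion of Lemma~\ref{nepevna}. Since the algebraic bracket $\{\xi,\U\}$ evaluated at $x$ depends only on the values $\xi(x)$ and $\U(x)$ (it is a pointwise, tensorial operation coming from the bracket on $\mathcal{A}M$), the vanishing $\U(x)=0$ gives $\{\xi,\U\}(x)\bullet W(x) = 0$ for every $\xi\in \frak X(M)$. The lemma then reduces to $2\na^\si_\xi W = 0$ at $x$ for all $\xi$, i.e.\ $\na^\si W = 0$ at $x$, where $\na^\si$ is the Weyl connection associated to the invariant Weyl structure.

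There is no real obstacle here: the work is entirely in Lemma~\ref{nepevna} and Proposition~\ref{upsilon}, and the only subtlety is the bookkeeping between the $G_0$--equivariant map $\U:\ba_0\to\fg_1$ (which vanishes along $p_0^{-1}(x)$) and the associated $1$--form on $M$ (which vanishes at $x$). Once this identification is made explicit, the algebraic term in the lemma drops out pointwise at $x$ and the theorem follows immediately.
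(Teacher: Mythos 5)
Your proposal is correct and follows essentially the same route as the paper: choose the $\va$--invariant Weyl structure at $x$ from Proposition~\ref{upsilon}, note that then $\U$ vanishes at $x$, and apply Lemma~\ref{nepevna} so that the tensorial term $\{\xi,\U\}\bullet W$ drops out, leaving $\na^\si_\xi W=0$ at $x$ for all $\xi$. The only difference is that you spell out the identification between the vanishing of the equivariant map $\U$ along $p_0^{-1}(x)$ and the vanishing of the corresponding $1$--form at $x$, which the paper leaves implicit.
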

\begin{proof}
Let $\va$ cover a symmetry $\underline{\va}$ with the center at $x$ and let $\si$ 
be the Weyl structure invariant with respect to $\va$ at $x$. 
Thus $\va^*\si=\si$ holds at the point $x$.
One can see all from the Lemma \ref{nepevna}. In this case, we have $\U=0$ at $x$ 
and the algebraic bracket from the expression in the  
Lemma has to vanish for all $\xi \in \frak X(M)$ at $x$. Then $2\na^\si_\xi W=0$ 
holds for all $\xi$ and this implies $\na^\si W=0$ at~$x$.
\end{proof}

In some sense, this is analogous to the classical results. 
On the affine locally symmetric space,
there is exactly one connection which is invariant with respect to all symmetries 
and we know that its curvature is covariantly constant with respect to the connection. 

In the case of symmetric $|1|$-graded geometries, there is the class of Weyl 
connections and we showed that at each point,
there is at least one of them such that the Weyl curvature is covariantly 
constant at the point.

\subsection*{Algebraic restrictions}
Using all latter facts we show some algebraic restriction on the Weyl curvature 
of symmetric $|1|$--graded geometries. Let us first introduce some conventions.

Let $\varphi $ and $\psi$ cover two different symmetries $\underline \va$ 
and $\underline \psi$ with the center at $x$ on a $|1|$--graded geometry. 
We showed above that in the fiber over $x$, the symmetries have different 
invariant Weyl structures at $x$.
Let $\si$ be $\va$-invariant Weyl structure at $x$, i.e. $\va^*\si=\si$ at $x$.
Then $\si$ cannot be $\psi$--invariant at $x$ and we have $\psi^*\si=\si+\U$ 
where $\U$ is nonzero at~$x$. 
In this way, we can find such $\Upsilon$ for each two coverings $\va$
and $\psi$ of two (different) symmetries. In fact, this $\U$ does not 
depend on the choice of the coverings of $\underline \va$ and $\underline \psi$ 
at the point $x$ because invariant Weyl structures of 
two different coverings of the same symmetry have to coincide along the fiber 
over $x$. In the sequel, we  call the form $\U$ at $x$ the
\emph{difference between $\underline \va$ and $\underline \psi$}. 
In the future, 
we often use the fact that this difference is nonzero at $x$, but we do not need the 
exact value. From this point of view, it is not important which 
symmetry is the first one and which is the second one.

\begin{prop} \label{omezeni}
Assume there are two different symmetries at the point $x$ on a $|1|$--graded geometry. 
Then 
\begin{align} \label{jedna}
\{\xi ,\Upsilon \} \bullet  W=0
\end{align} 
holds at $x$ for any $\xi \in \frak X(M)$, where $\U$ is the difference 
between the symmetries.

\end{prop}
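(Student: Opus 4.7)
The plan is to combine Lemma \ref{nepevna} and Theorem \ref{pevna} directly. Let $\va$ and $\psi$ be coverings of the two symmetries $\underline{\va}$ and $\underline{\psi}$ at $x$. First I would select a Weyl structure $\si$ which is $\va$-invariant at $x$, i.e. $\va^*\si = \si$ in the fiber over $x$; such a structure exists by Proposition \ref{upsilon}. By the very definition of the difference $\U$ between $\underline{\va}$ and $\underline{\psi}$, this same $\si$ then satisfies $\psi^*\si = \si + \U$ at $x$.

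Next I would apply Lemma \ref{nepevna} to the symmetry $\underline{\psi}$ (covered by $\psi$) with this Weyl structure $\si$. Since the $\U$ associated with $\psi^*\si = \si + \U$ is exactly the difference between the two symmetries, the Lemma gives
\begin{equation*}
\{\xi, \U\} \bullet W + 2\na^\si_\xi W = 0
\end{equation*}
at $x$, for every $\xi \in \frak X(M)$.

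Finally, since $\si$ is $\va$-invariant at $x$, Theorem \ref{pevna} applies to the symmetry $\underline{\va}$ and yields $\na^\si W = 0$ at $x$. Substituting this into the previous equation immediately gives $\{\xi, \U\} \bullet W = 0$ at $x$, which is the claim.

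I do not anticipate any real obstacle: the content is essentially bookkeeping of which Weyl structure is invariant under which covering, together with the observation (already embedded in the definition of the difference $\U$) that the form $\U$ is independent of the choice of coverings and so can be computed using a single $\si$ that is adapted to $\underline{\va}$. The only point that needs care is to make sure that the Weyl structure used in the two inputs (Lemma \ref{nepevna} applied to $\underline{\psi}$, and Theorem \ref{pevna} applied to $\underline{\va}$) is literally the \emph{same} $\si$; this is exactly what the two-symmetry setup allows, since invariant Weyl structures of distinct symmetries at $x$ differ precisely by the nonzero form $\U$, so any $\va$-invariant $\si$ automatically produces the correct $\U$ when pulled back by $\psi$.
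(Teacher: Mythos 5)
Your proposal is correct and coincides with the paper's own argument: both choose a $\va$-invariant Weyl structure $\si$ at $x$, note that $\psi^*\si=\si+\U$ realizes the difference $\U$, and then combine Lemma \ref{nepevna} (applied to $\psi$) with Theorem \ref{pevna} (applied to $\va$) to cancel the $\na^\si_\xi W$ term. The extra care you take about using literally the same $\si$ in both inputs is exactly the bookkeeping implicit in the paper's proof.
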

\begin{proof}
Let $\underline \va$ and $\underline \psi$ be two different symmetries 
at $x$ with coverings $\varphi $ and $\psi$ and let $\U$ be the difference between 
$\underline \va$ and $\underline \psi$, i.e. $\va^*\si=\si$ and $\psi^*\si=\si+\U$ 
hold at the center $x$ for some Weyl structure $\si$ and $\U$ is nonzero at~$x$. 
The Lemma \ref{nepevna} and Theorem \ref{pevna} give that
$$
 \nabla^\sigma_\xi W=0
,$$  
$$
 \{\xi ,\U \} \bullet  W + 2\nabla^{\sigma}_\xi W = 0
$$ 
hold for all $\xi \in \frak X(M)$. If we put this together, we get the 
required expression.
\end{proof}

\begin{cor}
If there are two different symmetries with the center at $x$ on a $|1|$--graded 
geometry then 
\begin{align} \label{dva}
\begin{split}
\{\{\xi,\Upsilon\},W(\eta,\mu)(\nu)\}&-W(\{\{\xi,\U\},\eta\},\mu)(\nu)\ -
\\ W(\eta,\{\{\xi, \Upsilon \},\mu \})(\nu)&-W(\eta,\mu)(\{\{\xi ,\U \},\nu \})=0
\end{split}
\end{align}
holds at $x$ for all $\xi, \eta, \mu, \nu \in \frak X(M)$, where $\U$ is the difference
between the symmetries.
\end{cor}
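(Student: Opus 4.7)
The plan is to recognize the displayed identity as nothing but the Leibniz expansion of the algebraic action $\{\xi,\U\}\bullet W=0$ established in Proposition~\ref{omezeni}. First I would note that, given any Weyl structure $\si$, the adjoint tractor bundle splits as $\mathcal{A}M=TM\oplus\mathrm{End}_0(TM)\oplus T^*M$, and the algebraic bracket of a vector field $\xi$ with a $1$--form $\U$ lands in $\mathrm{End}_0(TM)$ since it comes from $[\fg_{-1},\fg_1]\subset\fg_0$. So $\{\xi,\U\}$ is an endomorphism of $TM$ acting in the natural way, which in tractor notation is exactly $\{\{\xi,\U\},\cdot\}$ on sections of $TM$ and, dually, $-\{\{\xi,\U\},\cdot\}$ on sections of $T^*M$.

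Next, I would view the Weyl curvature $W$ as a section of $\Lambda^2T^*M\otimes T^*M\otimes TM$, so that the expression $W(\eta,\mu)(\nu)$ takes values in $TM$. The algebraic action $\bullet$ of an adjoint tractor in $\mathrm{End}_0(TM)$ on such a tensor is by derivation on each tensor factor, since $\bullet$ is induced from the infinitesimal representation of $\fg_0$ on tensor powers of $\fg_{-1}$ and $\fg_{-1}^*$. Concretely, for $A\in\Gamma(\mathrm{End}_0(TM))$,
\begin{align*}
(A\bullet W)(\eta,\mu)(\nu)=\ &\{A,W(\eta,\mu)(\nu)\}-W(\{A,\eta\},\mu)(\nu)\\
&-W(\eta,\{A,\mu\})(\nu)-W(\eta,\mu)(\{A,\nu\}).
\end{align*}

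Now, letting $\va$ and $\psi$ cover the two symmetries at $x$ and $\U$ be their difference, Proposition~\ref{omezeni} yields $\{\xi,\U\}\bullet W=0$ at $x$ for every vector field $\xi$. Substituting $A=\{\xi,\U\}$ in the above Leibniz formula gives precisely the identity~(\ref{dva}).

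The only real point to verify is the Leibniz formula itself, i.e.\ that the algebraic action of an element of $\mathrm{End}_0(TM)\subset\mathcal{A}M$ on $W$ acts as a derivation on the three tangent/cotangent factors with the bracket sign appearing as shown. This is a standard consequence of the way $\bullet$ is defined from the infinitesimal $\fg_0$-representation on $\fg_{-1}^*\otimes\fg_{-1}^*\otimes\fg_{-1}^*\otimes\fg_{-1}$; once this is recorded, the corollary is immediate from Proposition~\ref{omezeni}.
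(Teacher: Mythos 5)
Your argument is correct and coincides with the paper's own proof: the paper likewise expands $(\{\xi,\U\}\bullet W)(\eta,\mu)(\nu)$ by the derivation (Leibniz) rule for the algebraic action on the tensor $\wedge^2T^*M\otimes T^*M\otimes TM$ and then applies Proposition \ref{omezeni}. Your extra remarks justifying why $\{\xi,\U\}$ lies in ${\rm End}_0(TM)$ and why $\bullet$ acts as a derivation only make explicit what the paper leaves implicit.
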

\begin{proof}
The expression $\{\xi,\U\} \bullet W$ is of the type 
$\wedge^{2}T^*M \otimes T^*M \otimes TM$ for any vector field $\xi$ and we evaluate 
it on vector fields $\eta,\mu$ and $\nu$. We get:
\begin{align*}
(\{\xi ,\Upsilon \} \bullet  W)(\eta,\mu)(\nu)
=&\ \{\{\xi,\Upsilon\},W(\eta,\mu)(\nu)\}-W(\{\{\xi,\eta\},\eta\},\mu)(\nu)\ -\\
&\ W(\eta,\{\{\xi, \Upsilon \},\mu \})(\nu)-W(\eta,\mu)(\{\{\xi ,\U \},\nu \}).
\end{align*}
This implies directly the required formula.
\end{proof}

The latter Proposition gives some restriction on the curvature, which is not 
too clear. But we can use the formula $(1)$ to find some more clear restriction on the 
curvature for the interesting geometries. We should discuss the action of various 
elements, which can be written as the algebraic bracket of the difference and some 
vector field.
We would like to find some elements which act on the Weyl curvature in a sufficiently 
simple way. 
Then we could understand better the consequences of the restriction.

\begin{thm} \label{thm-proj} \label{thm-conf-indef}
Suppose there are two different symmetries at $x$ on a $|1|$--graded geometry.
Let $\U=\lz u,Z \pz$ be their difference at $x$ for some $Z \in \fg_{1}$ and 
$u$ from the fiber over $x$ and suppose there exists 
$X \in \fg_{-1}$ such that $[X,Z]$ acts by the adjoint action diagonalizable 
on $\fg_{-1}$ with eigenvalues $a_1, \dots, a_k$ such that 
$a_{i_1}+a_{i_2}+a_{i_3}-a_{i_4} \neq 0$ for arbitrary choice of them.
Then the Weyl curvature vanishes at $x$ and thus the whole curvature vanishes at $x$.
\end{thm}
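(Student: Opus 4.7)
The plan is to turn Proposition \ref{omezeni} into a purely algebraic equation on $W$ at $x$, and then to show that under the eigenvalue hypothesis this equation forces $W$ to vanish there.

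First I would choose $\xi \in \frak X(M)$ whose frame form at $u$ equals the given $X \in \fg_{-1}$, so that $\xi(x) = \lz u, X \pz$. The algebraic bracket $\{\xi, \U\}$ is then represented at $u$ by $[X,Z] \in \fg_0$, and Proposition \ref{omezeni} reads
$$
[X,Z] \bullet W(u) = 0,
$$
where $W(u) \in \wedge^2 \fg_{-1}^{*} \otimes \fg_0 \subset \wedge^2 \fg_{-1}^{*} \otimes \fg_{-1}^{*} \otimes \fg_{-1}$ and $\bullet$ acts as a derivation on each of the four tensor factors, exactly as written in the displayed formula $(2)$ preceding the theorem.

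Next I would use the diagonalizability of $\operatorname{ad}_{[X,Z]}$ on $\fg_{-1}$. Pick an eigenbasis $e_1, \dots, e_k$ of $\fg_{-1}$ with eigenvalues $a_1, \dots, a_k$, and let $e^1, \dots, e^k$ be the dual basis of $\fg_{-1}^{*}$, on which $[X,Z]$ acts with weights $-a_1, \dots, -a_k$. Expanding
$$
W(u) = \sum W^l_{ijk}\, e^i \wedge e^j \otimes e^k \otimes e_l,
$$
the derivation formula evaluated on eigenvectors gives
$$
(a_l - a_i - a_j - a_k)\, W^l_{ijk} = 0
$$
for every choice of indices. The hypothesis $a_{i_1} + a_{i_2} + a_{i_3} - a_{i_4} \neq 0$ kills every coefficient, hence $W(u) = 0$. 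Since $W$ is $G_0$-equivariant and independent of the Weyl structure, this forces the Weyl curvature to vanish at $x$.

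Finally, to pass from $W(x) = 0$ to the vanishing of the whole curvature $\ka(x) = 0$, I would invoke the special structure of the geometries covered by the hypothesis (those of Theorem \ref{interesting}): their harmonic curvature is concentrated in homogeneity two, and in the torsion-free situation one has $\ka = \ka_0 + \ka_1$ with $\ka_0 = W$. Combining $W(x) = 0$ with the covariant constancy $\na^\si W = 0$ at $x$ from Theorem \ref{pevna}, the Bianchi identity (which expresses the degree-three component $\ka_1$ pointwise in terms of a Weyl-covariant derivative of $W$) yields $\ka_1(x) = 0$. The main obstacle is the tensorial bookkeeping: one has to verify that the weight of the basis element $e^i \wedge e^j \otimes e^k \otimes e_l$ under $[X,Z]$ is exactly $a_l - a_i - a_j - a_k$, so that the combination $a_{i_1}+a_{i_2}+a_{i_3}-a_{i_4}$ appearing in the hypothesis matches precisely the linear combination produced by the derivation action on all four slots of $W$. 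This is immediate from the explicit formula in the corollary, but it is the step where the exact form of the eigenvalue condition is used.
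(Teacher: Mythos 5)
Your handling of the Weyl curvature is correct and is essentially the argument of the paper: you choose $\xi$ with $\xi(x)=\lz u,X\pz$, so that $\{\xi,\U\}$ is represented in the frame $u$ by $[X,Z]\in\fg_0$, and then formula $(\ref{dva})$ together with Proposition \ref{omezeni} shows that on each weight component of $W(u)\in\wedge^2\fg_{-1}^*\otimes\fg_{-1}^*\otimes\fg_{-1}$ the bracket acts by multiplication by $a_{i_1}+a_{i_2}+a_{i_3}-a_{i_4}$ (up to sign), which is nonzero by hypothesis, so $W(u)=0$. The paper phrases this as the decomposition of $T_xM$ into eigenspaces and of $\wedge^2T^*M\otimes T^*M\otimes TM$ into the induced components, but the computation is the same.

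The gap is in your last step, passing from $W(x)=0$ to $\ka(x)=0$. You invoke ``the Bianchi identity'' to express $\ka_1$ pointwise through a Weyl-covariant derivative of $W$ and then use $\na^\si W=0$ at $x$. The identity you have in mind (Cotton--York type tensor equals a multiple of a divergence of $W$, as in conformal or projective geometry) is not a consequence of the Bianchi identity alone: it requires the normality condition $\partial^*\ka=0$ and the vanishing of the relevant degree-three cohomology, and you additionally restrict to the geometries of Theorem \ref{interesting}; none of these hypotheses appears in the statement, which concerns an arbitrary $|1|$--graded geometry, and the Bianchi identity by itself only controls a cyclic sum involving $\ka_1$, not $\ka_1$ itself. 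The paper closes the argument much more cheaply and in full generality: since the torsion vanishes at $x$ (Proposition \ref{torsion-free}) and $\ka_0$ vanishes along the fiber over $x$ by the first part, one has $\ka(u)=\ka_1(u)$ with values in the $P$--invariant subspace $\fg_1$; equivariance of $\ka$ under the element $g=g_0\exp Z$ of Lemma \ref{prvekg}, which acts as $-\operatorname{id}$ on $\fg_{-1}$ and (since $\exp Z$ acts trivially on $\fg_1$) as $-\operatorname{id}$ on the values, gives $\ka_1(u)(X,Y)=-\ka_1(u)(X,Y)$, hence $\ka_1(x)=0$ --- exactly the sign argument already used for the torsion and in Remark \ref{homog3}. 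Replacing your Bianchi-identity appeal by this argument repairs the proof without any extra assumptions.
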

\begin{proof}
Remind that $\U$ has to be nonzero at $x$ because the symmetries are different. 
For various vector fields $\xi$, we would like to discuss the action of the elements 
of the form $\{\U, \xi \}$ on the Weyl curvature, see Proposition \ref{omezeni}.
First possible simplification gives us the formula $(\ref{dva})$ which reduces the 
discussion of the action on Weyl curvature to the action on the tangent vectors.
We will discuss the case when the suitable elements act simply by multiplication by 
numbers. 

Let us now describe the situation using the definition of tractor bundles and 
reduce it to the computation in coordinates, see Section \ref{tract}. 
Let $u$ be the frame from the fiber over $x$ such that $\U(x)=\lz u, Z \pz$ 
for $Z \in \fg_{1}$.
For each vector field $\xi$ we can then write $\xi(x)=\lz u, X \pz$ for some 
$X \in \fg_{-1}$. In this way, we have $\{\xi,\U \}(x)=\lz u, [X,Z] \pz$ and so on. 

We would like to discuss the action of the latter bracket on vector fields at $x$. 
In coordinates at the frame $u$, we simply study the adjoint action of $[X,Z]$ on 
the whole $\fg_{-1}$. Let $X \in \fg_{-1}$ be as in the assumption. Thus $[X,Z]$ acts by the adjoint action diagonalizable on $\fg_{-1}$ with eigenvalues $a_1, \dots, a_k$ 
and $\fg_{-1}$ decomposes into the eingenspaces. 
Then the tangent space at $x$ decomposes in the same way by the action by 
$\{\xi, \U\}$ where $\xi(x)=\lz u, X \pz$. We have found $\xi \in \frak{X}(M)$ giving `understandable' 
action of the bracket.

Now, the formula $(\ref{dva})$ says how to understand the action of this element 
on the Weyl curvature. The Weyl curvature lives in the component of 
$\wedge^{2}T^*M \otimes T^*M \otimes TM$ and decomposes with respect to the 
decomposition of the tangent space. Thus up to the choice of the frame $u$, 
the action of the bracket on each component is multiplication by the sum of suitable 
eigenvalues. If this sum is always nonzero, then the Weyl curvature has to vanish. 

If there is some symmetry with the center at $x$, then the torsion vanishes 
at $x$. Existence of two different symmetries at
$x$ satisfying the condition kills the Weyl curvature at $x$ and then $\ka=\ka_1$. 
But $\ka_1$ has to vanish too, the reason is the same as in the case of the torsion.
\end{proof}

Now, we can simply verify the latter condition for each geometry separately. We will see later that the `simplification' given by the choice of concrete $\xi$ is sufficient to get quite strong  restrictions in concrete geometries. We will discuss conformal and projective structures in detail. The almost Grassmannian and almost quaternionic structures behave analogously and we give here only short remark rather than the precise description.

\subsection*{Conformal structures}
The Theorem \ref{thm-conf-indef} reduces the discussion of the Weyl curvature of 
geometries with more than one symmetry at the point to simple algebraic condition on 
Lie algebras 
of the corresponding geometry. We discuss here, whether the condition can be satisfied 
in the case of conformal geometry. The computations will be performed in the setting 
of Example \ref{exam-conf}.

\begin{thm}
Suppose there are two different symmetries with the center at $x$ on the conformal 
geometry of arbitrary signature and 
denote $\U$ their difference. Suppose that this $\U$ has nonzero length at $x$. Then the 
curvature $\ka$ vanishes at $x$.
\end{thm}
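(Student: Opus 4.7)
The strategy is to apply Theorem~\ref{thm-proj} by exhibiting a single $X \in \fg_{-1}$ for which $[X,Z]$ acts on $\fg_{-1}$ as a nonzero multiple of the identity. Then every eigenvalue $a_i$ equals the same nonzero scalar $a$, so $a_{i_1}+a_{i_2}+a_{i_3}-a_{i_4} = 2a \neq 0$ automatically and the theorem forces the Weyl curvature to vanish at $x$.

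Fix a frame $u$ over $x$ and, using the matrix realization of Example~\ref{exam-conf}, write $\U(x)=\lz u, Z\pz$ with $Z \in \fg_1$ corresponding to a row $W \in \R^{p+q\,*}$. The Killing-form duality $\fg_{-1} \cong \fg_1^{*}$ identifies the conformal cometric on $T^*M$ (up to a positive scale) with $W \mapsto W J W^T$, so the hypothesis that $\U$ has nonzero length at $x$ translates into the algebraic condition $W J W^T \neq 0$. The key choice is the metric dual of $Z$: take $X := \lz u, JW^T\pz$. A routine $3 \times 3$-block computation of $XZ - ZX$ shows that the $\frak o(p,q)$-component of $[X,Z] \in \fg_0 \simeq \R \oplus \frak o(p,q)$ equals $JW^T W - JW^T W = 0$, the cancellation coming from $J^2 = E$. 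Hence $[X,Z]$ sits entirely in the $\R$-summand of $\fg_0$ with grading coefficient $WJW^T$, and standard properties of $|1|$-gradings give $\operatorname{ad}_{[X,Z]}|_{\fg_{-1}} = (WJW^T)\cdot \operatorname{id}_{\fg_{-1}}$.

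Theorem~\ref{thm-proj} now yields vanishing of the Weyl curvature at $x$. The remaining component $\ka_1$ of the curvature (homogeneous of degree~$3$) then vanishes by exactly the sign-reversal argument used for the torsion in Proposition~\ref{torsion-free} (cf.\ Remark~\ref{homog3}), so $\ka(x) = 0$ as required. The only nontrivial step in the plan is the choice $Y = JW^T$; once one notices that this particular dual makes the $\frak o(p,q)$-piece of $[X,Z]$ collapse, the scalar action on $\fg_{-1}$ and the rest of the conclusion are forced.
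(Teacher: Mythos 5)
Your proposal is correct and takes essentially the same route as the paper: you pick $X$ to be the metric dual of $Z$, check that $[X,Z]$ is a multiple of the grading element acting on $\fg_{-1}$ by the nonzero scalar $WJW^T$ (the squared length of $\U$), and then invoke the general eigenvalue criterion of Theorem \ref{thm-conf-indef}, exactly as in the paper's proof and its accompanying remark on the grading element. The only cosmetic difference is that your final step on $\ka_1$ is not needed separately, since the conclusion ``the whole curvature vanishes at $x$'' is already part of that theorem.
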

\begin{proof}
We will discuss the condition from the Theorem \ref{thm-conf-indef}.
Up to the choice of the frame, the difference can be represented by some matrix 
$Z=\left( \begin{smallmatrix} 0&V&0\\0&0&-JV^T\\0&0&0\end{smallmatrix} \right) \in \fg_{1}$ for
 some nonzero $V \in \R^{p+q}$. We choose
$X=\left( \begin{smallmatrix} 0&0&0\\JV^T&0&0\\0&-V&0\end{smallmatrix} \right) \in \fg_{1}$, 
just the dual of  $Z$. The Lie bracket of the elements is
$$
\left[\left( \begin{smallmatrix} 0&0&0\\JV^T&0&0\\0&-V&0\end{smallmatrix} \right),
\left( \begin{smallmatrix} 0&V&0\\0&0&-JV^T\\0&0&0\end{smallmatrix} \right) \right]= 
\left( \begin{smallmatrix} -VJV^T&0&0\\0&0&0\\0&0&VJV^T\end{smallmatrix} \right). 
$$
Here $VJV^T$ correspond to the square of the length of $\U$ and it is nonzero if and 
only if $\U$ has nonzero length. Under this condition, the latter element clearly acts
as a multiplication.  More precisely, we have    
$$
\Bigg[
\Bigg( \begin{smallmatrix} -VJV^T&0&0\\0&0&0\\0&0&VJV^T\end{smallmatrix} \Bigg),
\Bigg( \begin{smallmatrix} 0&0&0\\Y&0&0\\0&-JY^T&0\end{smallmatrix} \Bigg) 
\Bigg]= 
\Bigg( \begin{smallmatrix} 0&0&0\\VJV^TY&0&0\\0&-VJV^TJY^T&0\end{smallmatrix} \Bigg) 
.$$
Thus the whole $\fg_{-1}$ is one eigenspace and the condition on the 
sum of eigenvalues is always satisfied. Then the curvatures has to vanish, see Theorem 
\ref{thm-conf-indef}.
\end{proof}
\begin{rem}
One can also reduce some computation by the observation, that the bracket is just 
multiple of the grading element in the conformal case. Remind, that the grading element 
is the only element $E \in \fg_{0}$ with the property $[E,Y]=jY$ for all $Y \in \fg_j$.
It exists for each parabolic geometry and one can verify that 
in the conformal geometry, it is of the form  
$\left( \begin{smallmatrix} 1&0&0\\0&0&0\\0&0&-1\end{smallmatrix} \right)$. We just 
 multiply it by the number $-VJV^T=-|V|^2$.
\end{rem}
\begin{cor}
Suppose there are two different symmetries with the center at $x$ on the conformal geometry 
of positive definite or negative definite signature. Then the curvature vanishes at $x$.
\end{cor}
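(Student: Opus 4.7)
The plan is to reduce this immediately to the preceding theorem by observing that in definite signature the assumption on $\U$ having nonzero length is automatic. Concretely, the difference $\U$ between the two distinct symmetries is a nonzero element of $\fg_1$ at $x$ (nonzero because distinct symmetries have different invariant Weyl structures in the fiber over $x$, as discussed earlier, so $\U\neq 0$ at $x$). Writing $\U(x)=\lz u,Z\pz$ for a frame $u$ over $x$, the matrix $Z$ has the form
\[
Z=\begin{pmatrix}0&V&0\\0&0&-JV^T\\0&0&0\end{pmatrix}
\]
with $V\in\R^{p+q}$ nonzero. The ``length squared'' of $\U$ at $x$ computed in the previous proof is exactly $VJV^T$.

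Next I would use that in positive definite signature $J=E_{p+q}$, so $VJV^T=|V|^2>0$, and in negative definite signature $J=-E_{p+q}$, so $VJV^T=-|V|^2<0$. Either way $VJV^T\neq 0$ whenever $V\neq 0$, so $\U$ has nonzero length at $x$.

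With this verified, the hypothesis of the preceding theorem is satisfied, and its conclusion gives $\ka(x)=0$, which is exactly what we need. So the only real content is the linear algebra observation that a nonzero vector in a definite inner product space has nonzero norm, together with the identification of $VJV^T$ with that norm made in the proof above. There is no substantive obstacle; the work was all done in the previous theorem, and the corollary simply packages the fact that the nondegeneracy hypothesis on $\U$ is vacuous in definite signature.
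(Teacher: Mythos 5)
Your proposal is correct and follows exactly the paper's own argument: the paper's proof also just notes that in definite signature a nonzero vector has nonzero length, so the hypothesis of the preceding theorem holds automatically and the conclusion follows. Your extra detail (writing out $Z$, identifying $VJV^T=\pm|V|^2$, and recalling why $\U\neq 0$) merely makes explicit what the paper leaves implicit.
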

\begin{proof}
In these cases, the length of a nonzero vector is always nonzero. The rest follows immediately.
\end{proof}

\subsection*{Projective geometries}
Let us make now similar discussion for projective geometries. We will study the vanishing 
of the curvature at the point with more than one symmetry. 
We are again 
interested in the condition in Theorem \ref{thm-conf-indef}. We use the notation introduced 
in Example \ref{proj}.

\begin{thm} \label{thm-proj}
Suppose there are two different symmetries with the center at $x$ on a 
projective geometry. Then the curvature $\ka$ of the geometry vanishes at $x$.
\end{thm}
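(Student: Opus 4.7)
My plan is to reduce Theorem \ref{thm-proj} to the algebraic criterion of Theorem \ref{thm-conf-indef} by exhibiting, for any nonzero difference $\U = \lz u, Z \pz$ with $Z \in \fg_1$, a single $X \in \fg_{-1}$ whose bracket $[X,Z]$ has a particularly simple adjoint action on $\fg_{-1}$. Once the eigenvalue hypothesis is verified, the passage from the vanishing of the Weyl curvature to the vanishing of the full curvature is supplied by Theorem \ref{thm-conf-indef} itself, together with the sign-change argument from Proposition \ref{torsion-free} that also kills $\ka_1$.

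Using the matrix realization of $\fg = \mathfrak{sl}(m+1,\R)$ from Example \ref{proj}, I would represent $Z$ by $\Big(\begin{smallmatrix} 0 & w \\ 0 & 0 \end{smallmatrix}\Big)$ for a nonzero row $w \in \R^{m*}$, pick a column $v \in \R^m$ with $\lambda := wv \neq 0$ (possible because $w \neq 0$), and set $X = \Big(\begin{smallmatrix} 0 & 0 \\ v & 0 \end{smallmatrix}\Big) \in \fg_{-1}$. A routine block-matrix calculation gives $[X,Z] = \Big(\begin{smallmatrix} -\lambda & 0 \\ 0 & vw \end{smallmatrix}\Big) \in \fg_0$, where $vw$ is the rank-one outer product, and its adjoint action on an arbitrary $u \in \R^m \simeq \fg_{-1}$ comes out to $u \mapsto \lambda u + (wu)v$.

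Decomposing $\R^m = \ker w \oplus \R v$ (a direct sum because $wv \neq 0$) diagonalizes this action with eigenvalue $\lambda$ on the hyperplane $\ker w$ and eigenvalue $2\lambda$ on the line $\R v$, so every expression $a_{i_1} + a_{i_2} + a_{i_3} - a_{i_4}$ with $a_{i_j} \in \{\lambda, 2\lambda\}$ lies in $\{\lambda, 2\lambda, 3\lambda, 4\lambda, 5\lambda\}$ and is nonzero. Theorem \ref{thm-conf-indef} then concludes. I do not anticipate a serious obstacle: the rigid matrix structure of $\mathfrak{sl}(m+1,\R)$ forces $[X,Z]$ to be a scalar plus a rank-one operator, which automatically yields a two-eigenvalue spectrum proportional to the nonzero $\lambda$. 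The only point needing care is the bookkeeping in the adjoint action, specifically confirming that the scalar contribution from the top-left entry and the rank-one contribution from the bottom-right block of $[X,Z]$ add rather than cancel on the distinguished line $\R v$, giving eigenvalue $2\lambda$ there.
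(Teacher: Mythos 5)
Your proposal is correct and follows essentially the same route as the paper: reduce to the eigenvalue criterion of Theorem \ref{thm-conf-indef}, bracket the difference $Z$ with a suitable $X \in \fg_{-1}$, and observe that the induced action on $\fg_{-1}$ is scalar-plus-rank-one with eigenvalues $\lambda$ and $2\lambda$, so no combination $a_{i_1}+a_{i_2}+a_{i_3}-a_{i_4}$ vanishes. The only (inessential) difference is that the paper normalizes the difference to $V=(1,0,\dots,0)$ by a choice of frame and takes $X=V^T$, whereas you keep a general nonzero $w$ and any $v$ with $wv\neq 0$, which is the same computation done slightly more invariantly.
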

\begin{proof}
We will discuss the condition in the Theorem \ref{thm-conf-indef}. 
In contrast to the conformal case, we cannot find the bracket in the form 
of a multiple of the grading element. One can find the grading element and compute 
the bracket of $\fg_{-1}$ and $\fg_{1}$ in general. 
Up to the choice of the frame, we represent the difference by matrix 
$Z=\left( \begin{smallmatrix} 0&V\\0& 0\end{smallmatrix} \right)$ 
where $V=(1,0,\dots,0) \in \R^{m*}$. (We take concrete matrix, because in contrast to 
the conformal case, the computation in general is not transparent.)
We choose $X=\left( \begin{smallmatrix} 0&0\\V^T& 0\end{smallmatrix} \right)$.
The Lie bracket of the elements is
$$
\Big[ \Big( \begin{smallmatrix} 0&0\\V^T& 0\end{smallmatrix} \Big),
\Big( \begin{smallmatrix} 0&V\\0& 0\end{smallmatrix} \Big) \Big]=
\Big( \begin{smallmatrix} -VV^T&0\\0& V^TV\end{smallmatrix} \Big).
$$
For our choice, $VV^T=1$ and matrix $V^TV$ has $1$ on the first row and column and zero elswere.
The action of the element on $\fg_{-1}$ is following:
$$
\left[
\left( \begin{smallmatrix} -1&0&0&\dots&0\\0&1&0&\dots&0\\0&0&0&\dots&0 \\ \vdots& \vdots&\vdots&\ddots&\vdots\\0&0&0&\dots&0 \end{smallmatrix} \right),
\left( \begin{smallmatrix} 0&0&0&\dots&0\\a&0&0&\dots&0\\b&0&0&\dots&0 \\ \vdots &\vdots&\vdots&\ddots&\vdots\\c&0&0&\dots&0\end{smallmatrix} \right)
\right]= 
\left( \begin{smallmatrix} 0&0&0&\dots&0\\2a&0&0&\dots&0\\b&0&0&\dots&0 \\ \vdots &\vdots&\vdots&\ddots&\vdots\\c&0&0&\dots&0 \end{smallmatrix} \right)
.$$
Thus $\fg_{-1}$ decomposes into the eigenspaces for eigenvalues $1$ and $2$. The condition on the sum of eigenvalues is always satisfied and we have no other restriction.
Then the curvatures has to vanish, see Theorem \ref{thm-conf-indef}.
\end{proof}

\begin{cor}
For projective geometries, there can exist at most one symmetry at the 
point with nonzero curvature. 
If there are two different symmetries at each point, then the geometry is 
locally flat.
\end{cor}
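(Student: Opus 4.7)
The plan is to derive both assertions as direct consequences of Theorem \ref{thm-proj}, which states that the existence of two different symmetries at a point $x$ on a projective geometry forces the curvature $\ka$ to vanish at $x$.

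For the first assertion, I would argue by contraposition. Suppose for contradiction that there were two different symmetries with center at a point $x$ where $\ka(x)\neq 0$. Then Theorem \ref{thm-proj} would immediately give $\ka(x) = 0$, which contradicts our assumption. Hence at every point with nonzero curvature there can exist at most one symmetry.

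For the second assertion, assume that at every point $x \in M$ there are two different symmetries with center at $x$. Applying Theorem \ref{thm-proj} pointwise then yields $\ka(x) = 0$ for all $x \in M$, so the curvature function vanishes identically on $\ba$. By Theorem \ref{thm-locally-flat}, the geometry is locally isomorphic to the homogeneous model $(G \to G/P, \om_G)$, i.e. it is locally flat.

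There is essentially no obstacle to overcome: the whole content has been carried by Theorem \ref{thm-proj}, and the corollary just packages it into the language of pointwise versus global statements. The only minor thing to keep in mind is that the phrase \emph{locally flat} is used here in the sense of Theorem \ref{thm-locally-flat} (vanishing of the full Cartan curvature $\ka$), which is exactly what Theorem \ref{thm-proj} delivers, so no additional harmonic-curvature argument or passage to $\ka_1$ is needed beyond what is already built into that theorem's proof.
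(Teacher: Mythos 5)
Your proposal is correct and matches the paper's (implicit) reasoning exactly: the corollary is stated as an immediate consequence of Theorem \ref{thm-proj}, the first claim being its contrapositive and the second following by applying the theorem at every point and invoking vanishing of $\ka$ (hence local flatness via Theorem \ref{thm-locally-flat}). No further argument is needed.
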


\subsection*{Remark on further geometries}
We just shortly summarize analogous results for the remaining two geometries. 
\begin{thm}
(1) Suppose there are two different symmetries with the center at $x$ on the almost Grassmannian geometry of type $(2,q)$ or $(p,2)$ and denote $\U$ their difference. 
Suppose that this $\U$ has maximal rank at $x$. Then the curvature $\ka$ vanishes at $x$.
\\
(2) Suppose there are two different symmetries with the center at $x$ on an almost quaternionic
 geometry. Then the curvature $\ka$ of the geometry vanishes at $x$.
\end{thm}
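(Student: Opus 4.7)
Both parts reduce to verifying the algebraic hypothesis of Theorem~\ref{thm-conf-indef} for a well--chosen $X \in \fg_{-1}$, given the element $Z \in \fg_{1}$ representing $\U$ in some frame over $x$. The scheme is identical to the projective and conformal proofs: bring $Z$ to a canonical form by the $G_0$-action on the fiber, take $X$ to be the block-transpose of $Z$, and compute the adjoint action of $[X,Z]$ on $\fg_{-1}$ from the block-matrix identity
\begin{equation*}
[X,Z]=\begin{pmatrix} -Z_0 X_0 & 0 \\ 0 & X_0 Z_0 \end{pmatrix},
\qquad
[[X,Z],Y_0] = X_0 Z_0\,Y_0 + Y_0\,Z_0 X_0.
\end{equation*}

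For part (1), in type $(2,q)$ we write $\fg=\mathfrak{sl}(2+q,\R)$ in $(2,q)$-block form so that $\fg_{-1}$ is the $q\times 2$ lower--left block, $\fg_{1}$ the $2\times q$ upper--right block, and $G_0$ acts on $\fg_1$ by left and right matrix multiplication. Since $\U$ has maximal rank $2$, a change of frame brings $Z$ to $Z=(I_2\mid 0)$; setting $X=Z^T$ gives $Z_0X_0=I_2$ and $X_0Z_0=\mathrm{diag}(1,1,0,\dots,0)$, so the adjoint action on a $q\times 2$ matrix $Y_0$ multiplies the first two rows by $2$ and fixes the remaining rows. The eigenvalues are therefore in $\{1,2\}$ only, and any sum $a_{i_1}+a_{i_2}+a_{i_3}-a_{i_4}\ge 3-2=1$ is strictly positive; Theorem~\ref{thm-conf-indef} then gives $\ka=0$ at $x$. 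Type $(p,2)$ is identical after transposition.

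For part (2), $\fg=\mathfrak{sl}(m+1,\mathbb{H})$ is structurally the projective case over $\mathbb{H}$. The space $\fg_1\simeq\mathbb{H}^{m*}$ has no notion of rank, so any nonzero $Z$ is $G_0$-equivalent to the real canonical form $Z=(1,0,\dots,0)$, exactly as in Example~\ref{proj}. Taking $X=Z^T$ yields $Z_0X_0=1\in\mathbb{H}$ and $X_0Z_0=E_{11}$, with entirely real entries, so quaternionic noncommutativity never enters the computation; the resulting map $Y_0\mapsto E_{11}Y_0+Y_0$ on $\mathbb{H}^m$ is diagonalizable over $\R$ with eigenvalues $\{1,2\}$, the first quaternionic line being the $2$-eigenspace. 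Theorem~\ref{thm-conf-indef} applies. The assumption $\U\neq 0$, automatic when the symmetries are distinct, is all that is needed because $\fg_1$ contains no lower-rank nonzero elements.

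The substantive subtlety, and the expected main obstacle, is the maximal-rank assumption in (1): if $Z$ has rank $1$, the same block construction forces an additional eigenvalue $0$ into the spectrum of the adjoint action, and then $0+0+0-0=0$ defeats the criterion of Theorem~\ref{thm-conf-indef}. One must accept this as a genuine obstruction for the method rather than an artifact of the choice $X=Z^T$, which is precisely why the authors impose ``maximal rank'' in (1) while (2) requires no such hypothesis.
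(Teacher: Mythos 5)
Your proposal is correct and follows essentially the same route the paper indicates: the paper defers the details to its reference but states that the proofs are "similar to the projective case," i.e. one verifies the eigenvalue hypothesis of Theorem \ref{thm-conf-indef} by bringing the difference $\U$ to a canonical form in $\fg_1$, pairing it with the transposed element of $\fg_{-1}$, and computing the adjoint action of the bracket, exactly as you do. Your block computations (eigenvalues $1,2$ in both the rank-two Grassmannian case and the quaternionic case, and the unavoidable $0$ eigenvalue when the rank drops, explaining the maximal-rank hypothesis) correctly fill in those details.
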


The proofs are similar to the projective case and can be found in \cite{ja-dga07}. The more precise discussion and examples of the almost Grassmannian case can be also found in \cite{javojta}. 
\begin{rem}
The condition of maximal rank for the almost Grassmannian geometries of the latter type 
means that the rank of the difference is $2$ at $x$.
Remind that for the almost Grassmannian geometries, the other cases ($p>2$ and $q>2$) are not interesting, because each symmetric geometry of this type has to be locally flat.

Let us also note that in the lowest dimension ($p=q=2$), the almost Grassmannian geometry correspond to the conformal geometry of indefinite type and the condition on the length of the $\U$ agree with the condition on its rank.
\end{rem}

\end{document}